\newtheorem{theorem}{Theorem}[section]
\newtheorem{lemma}[theorem]{Lemma}
\newtheorem{proposition}[theorem]{Proposition}
\newtheorem{corollary}[theorem]{Corollary} 
\theoremstyle{definition}  
\newtheorem{example}[theorem]{Example}
\newtheorem{remark}[theorem]{Remark}
\newcommand{\End}{\operatorname{End}}
\newcommand{\Hom}{\operatorname{Hom}} 
\def\uRep{\underline{\operatorname{Re}}\!\operatorname{p}} 
\newcommand{\Rep}{\operatorname{Rep}}
\newcommand{\id}{\text{id}}
\newcommand{\g}{\mathfrak{g}}
\newcommand{\Z}{\mathbb{Z}}
\newcommand{\C}{\mathbb{C}}
\newcommand{\Sr}{\mathbb{S}}
\newcommand{\im}{\operatorname{im}}
\newcommand{\rev}{\operatorname{rev}}
\newcommand{\B}{\mathcal{B}}
\renewcommand{\k}{\Bbbk}
\newcommand{\suchthat}{\mathrel{}\middle|\mathrel{}}
\newcommand{\Cp}{\operatorname{Cap}}
\newcommand{\lift}{\operatorname{lift}}
\newcommand{\down}{\vee}
\newcommand{\up}{\wedge}
\newcommand{\cross}{{\Large\text{$\times$}}}
\newcommand{\bigo}{\bigcirc}
\newcommand{\diam}{\raisebox{-0.015in}{{\Large\text{$\Diamond$}}}}
\newcommand{\Add}{\operatorname{Add}}
\newcommand{\Rem}{\operatorname{Rem}}
\newcommand{\I}{\mathcal{I}}
\newcommand{\<}{\langle}
\renewcommand{\>}{\rangle}
\newcommand{\rk}{\operatorname{rk}}
\newcommand{\df}{\operatorname{def}}
\newcommand{\Cat}{\mathcal{C}}
\newcommand{\one}{{\mathbf{1}}}
\newcommand{\NN}{{\mathcal{N}}}
\begin{document}

\title{Thick Ideals in Deligne's category  $\uRep(O_\delta)$}

\author{Jonathan Comes}
\address{J.C.: Department of Mathematics \& Physical Sciences, The College of Idaho}
\email{jonnycomes@gmail.com}

\author{Thorsten Heidersdorf} 
\address{T.H.: Department of Mathematics, The Ohio State University}
\email{heidersdorf.1@osu.edu}

\thanks{2010 {\it Mathematics Subject Classification}: 17B10, 18D10.}

\begin{abstract} We describe indecomposable objects in Deligne's category $\uRep(O_\delta)$ and explain how to decompose their tensor products. We then classify thick ideals in $\uRep(O_\delta)$. As an application we classify the indecomposable summands of tensor powers of the standard representation of the orthosymplectic supergroup up to isomorphism.
\end{abstract}

\maketitle


\section{Introduction} Let $\k$ denote a field of characteristic zero. For any $\delta \in \k$ Deligne has defined universal  categories $\uRep(Gl_\delta)$ and $\uRep(O_\delta)$ which interpolate the classical representation categories of the general linear and orthogonal group. By their universal properties these categories come with tensor functors to the representation categories of the general linear supergroup $Gl(m|n)$ and the orthosymplectic supergroup $OSp(m|2n)$, $m =2\ell$ or $2\ell+1$. In the $Gl(m|n)$-case Schur-Weyl duality, established by Sergeev \cite{Serg} and by Berele-Regev \cite{BR}, gives a classification for the irreducible summands in tensor powers $V^{\otimes r}$ of the natural representation $V$ of $Gl(m|n)$. This result was extended to the case of direct summands in mixed tensor powers $V^{\otimes r} \otimes (V^*)^{\otimes s}$
by Comes-Wilson \cite{CW} using Deligne's interpolating category $\uRep(Gl_\delta)$ and independently by Brundan-Stroppel \cite{BS}. In this case the indecomposable summands, the mixed tensors, are classified by $(m|n)$-cross bipartitions.  The results in \cite{CW} were then used to classify thick ideals in $\uRep(Gl_\delta)$ in \cite{Comes}.  In this paper we first classify  thick ideals in $\uRep(O_\delta)$, and then use that classification to classify indecomposable summands of tensor powers of the natural representation of $OSp(m|2n)$.

\subsection{Deligne's category} The category $\uRep(O_\delta)$, defined by Deligne [Del1-2]\nocite{Del96, Del07},  permits the simultaneous study of the tensor powers for the orthogonal groups and the orthosymplectic supergroups. This category is constructed as the Karoubi envelope of the additive envelope of the Brauer category $\B(\delta)$ in \S\ref{Brauer category}. The objects in $\B(\delta)$ are nonnegative integers, representing the potencies of a tensor power, and the morphism spaces are spanned by Brauer diagrams of the appropriate sizes.  In particular, the endomorphism algebras in $\B(\delta)$ are precisely the Brauer algebras $B_r(\delta)$ introduced in \cite{Brauer}.  Recent results in the representation theory of the Brauer algebra play a crucial role. Up to isomorphism, the simple $B_r(\delta)$-modules are parametrized by the set
 \begin{align*} \Lambda_r = \begin{cases}  \{ \lambda \ | \ |\lambda| = r - 2i, \ 0 \leq i \leq \frac{r}{2} \}, \text{ if } \delta \neq 0 \text{ or } r \text{ odd;} \\  \{ \lambda  \ | \ |\lambda| = r - 2i, \ 0 \leq i < \frac{r}{2} \}, \text{ if } \delta = 0 \text{ and } r \text{ even.} \end{cases} \end{align*}
This enables the parametrization of the indecomposable objects in $\uRep(O_\delta)$ by partitions (Theorem \ref{indecomposable classification}).  We write $R(\lambda$) for the indecomposable object corresponding to the partition $\lambda$.  Using a result of Koike \cite{Koike}, we obtain a formula for decomposing $R(\lambda)\otimes R(\mu)$ in terms of Littlewood-Richardson coefficients, which holds in $\uRep(O_\delta)$ for generic values of $\delta$ (Theorem \ref{generic tensor}).  
The decomposition numbers for the Brauer algebras were determined in \cite{Mar}, and they are described in \cite{CD} using a generalization of the cap diagrams introduced by Brundan-Stroppel [BS1-5]\nocite{BS1,BS2,BS3,BS4}.  We exploit this result in \S\ref{lifts} in order to describe a ``lifting isomorphism'' in the spirit of \cite{CO11}. The lifting isomorphism relates the additive Grothendieck rings of $\uRep(O_\delta)$ in the singular and generic cases (Theorem \ref{lift}(1)), thus enabling the decomposition of the tensor product of indecomposable objects in $\uRep(O_\delta)$ for any value of $\delta$.

\subsection{Classification of thick ideals}\label{intro:ideals} A collection of objects $\I$ in a braided monoidal category $\Cat$ is called a \emph{thick ideal} of $\Cat$ if the following conditions are satisfied:
\begin{enumerate}
\item[(i)] $X\otimes Y\in\I$ whenever $X\in\Cat$ and $Y\in\I$.
\item[(ii)] If $X\in\Cat$, $Y\in\I$ and there exist $\alpha:X\to Y$, $\beta:Y\to X$ such that $\beta\circ\alpha=\id_X$, then $X\in\I$.
\end{enumerate}
When $\delta\not\in\Z$, $\uRep(O_\delta)$ is semisimple (see \S\ref{ss}) and hence has no interesting thick ideals.  
In \S\ref{ideals} we classify the thick ideals in $\uRep(O_\delta)$ when $\delta\in\Z$.  More precisely, in \S\ref{lifts} we use cap diagrams to define a number $k(\lambda)$ for each partition $\lambda$.  The cap diagrams, and hence $k(\lambda)$, depend on the parameter $\delta\in\Z$.   For each integer $k \geq 0$ we define a subset $\I_k$ of $\uRep(O_\delta)$ as the collection of all objects whose indecomposable summands $R(\lambda)$ satisfy $k(\lambda) \geq k$.  In \S\ref{ideal subsection} we prove that $\I_k$ is a thick ideal (Corollary \ref{Ik is thick}).  In \S\ref{ideal classification subsection} we show that all nonzero thick ideals in $\uRep(O_\delta)$ are of the form $\I_k$ (Theorem \ref{I_k theorem}).


\subsection{The orthosymplectic supergroup}\label{intro: OSp}

We assume now that $\k$ is algebraically closed. By the universal property of Deligne's category there is a tensor functor $F_{m|2n} : \uRep(O_{m-2n}) \to \Rep(OSp(m|2n))$. We denote by $\I(m|2n)$ the collection of objects in $\uRep(O_{m-2n})$ sent to zero by $F_{m|2n}$. Then $\I(m|2n)$ is a thick ideal in $\uRep(O_{m-2n})$. Theorem \ref{theorem:kernel}, the main result of \S \ref{orthosymplectic}, states that every $\I_k$ is one of the $\mathcal{I}(m|2n)$, more precisely $\mathcal{I}(m|2n) = \I_{\min(\ell,n) + 1}$. The crucial tool here are the cohomological tensor functors of Duflo-Serganova \cite{Duflo-Serganova}. These are tensor functors $F_x: \Rep(OSp(m|2n)) \to \Rep(OSp(m-2r|2n-2r))$ with $1 \leq r \leq \min(\ell,n)$ where $x$ is an element in the odd part of the Lie superalgebra $\mathfrak{osp}(m|2n)$ satisfying $[x,x] = 0$. For a special choice of $x$ (see \S \ref{cohomological}) we show that the kernel is the thick ideal $Proj$ of projective representations. This enables us to control the image of an arbitrary thick ideal $\I$ of $\uRep(O_{m-2n})$ under the functor $F_{m|2n}$. As a consequence of a result of Lehrer-Zhang \cite[Corollary 5.8]{Lehrer-Zhang-1}, the functor $F_{m|2n}$ is full and so the image of $F_{m|2n}$ is the space of tensors of $OSp(m|2n)$, the direct sums of  summands of  tensor powers $V^{\otimes r}$. Along with results on the kernel of $F_{m|2n}$ we obtain a classification of the tensors of $\Rep(OSp(m|2n))$. For any $m,n \geq 0$ and any partition $\lambda$, $F_{m|2n}(R(\lambda))$ is an indecomposable object of $\Rep(OSp(m|2n))$ (Theorem \ref{full}) and is non-zero if and only if $\lambda$ satisfies $k(\lambda) \leq \min(\ell,n)$ (Corollary \ref{kernel}). Moreover, any non-zero indecomposable summand of a  tensor power $V^{\otimes r}$ in $\Rep(OSp(m|2n))$ is isomorphic to $F_{m|2n}(R(\lambda))$ for precisely one partition $\lambda \in \Lambda_r$ satisfying  $k(\lambda) \leq \min(\ell,n)$ (Corollary \ref{tensors}). We end the article with some open questions and problems in \S \ref{questions}.


\subsection{Acknowledgements} The authors wish to thank Catharina Stroppel for providing many useful comments on an earlier version of this paper. The authors are also grateful to the referee for carefully reading the paper and providing useful suggestions. 


\section{Deligne's $\uRep(O_\delta)$}

\subsection{The Brauer category} \label{Brauer category}

Given $r,s\in\Z_{\geq0}$,  a \emph{Brauer diagram of type $r\to s$} is a diagrammatic presentation of a partitioning of $$\{i~|~1\leq i\leq r\}\cup\{j'~|~1\leq j\leq s\}$$ into disjoint pairs obtained by imagining $1,\ldots,r$ written below $1',\ldots,s'$ and drawing strands connecting paired elements.  For example, the following Brauer diagram of type $5\to3$  corresponds to the pairing $\{1',3'\}\cup\{1,2'\}\cup\{2,5\}\cup\{3,4\}$:
\[\includegraphics{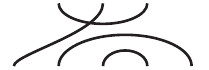}\]
We say that two Brauer diagrams are {\em equivalent}
if they are of the same type and represent the same pairing.
In diagrammatic terms, this means that one diagram can be obtained from the other by continuously deforming its strands, possibly moving them through
other strands and crossings, but keeping
endpoints fixed.

Given Brauer diagrams $g$ and $h$ of types $q\to r$ and
$r\to s$ respectively, we can stack $h$ on top of $g$ to 
obtain a Brauer diagram of type $q\to s$ along with finitely many loops made up of strands with no endpoints, which we call \emph{bubbles}.  We let $h\star g$ denote the Brauer diagram of type $q\to s$ obtained by removing all the bubbles, and let $\beta(g,h)$ denote the number of bubbles removed.  Note that $\beta(g,h)$ and $h\star g$ (up to equivalence of Brauer diagrams)  are well defined and independent of the chosen equivalence classes of $g$ and $h$.  

Now, given $\delta\in\k$ we define the \emph{Brauer category} $\B(\delta)$
to be the
category with nonnegative integers as objects and
morphisms $\Hom_{\B(\delta)}(r, s)$ consisting of all formal
$\k$-linear combinations of equivalence classes of
Brauer diagrams of type $r\to s$.
The composition of diagrams is defined by $h \circ g=\delta^{\beta(g,h)}h\star g$; it is easy to see that this is associative.
There is also a well-defined tensor product making $\B(\delta)$ into a strict 
monoidal category.
This is 
defined on diagrams so that $g \otimes h$ is obtained by horizontally stacking $g$
to the left of $h$.  The obvious braiding gives $\B(\delta)$ the structure of a strict symmetric monoidal category.  Finally, $\B(\delta)$ is rigid with every object being self-dual.  

The endomorphism algebras in $\B(\delta)$ are the well-known \emph{Brauer algebras} introduced in \cite{Brauer}.  Given $r\in\Z_{\geq0}$, we will write $B_r(\delta)$ or simply $B_r$ for the endomorphism algebra $\End_{\B(\delta)}(r)$.  

%


\subsection{The category $\uRep(O_\delta)$}  

For a fixed $\delta\in\k$, the category $\uRep(O_\delta)$ is obtained from $\B(\delta)$ by formally adding direct sums and images of idempotent morphisms.  More precisely, we define the category $\uRep(O_\delta)$ to be the Karoubi envelope of the additive envelope of $\B(\delta)$  (see, for instance, \cite[\S2.5-2.6]{CW} for more details).  As explained in \emph{loc.\,cit.}, $\uRep(O_\delta)$ inherits the structure of a $\k$-linear rigid symmetric monoidal category from $\B(\delta)$, and we may identify $\B(\delta)$ with a full subcategory of $\uRep(O_\delta)$.  Moreover, whenever $e$ is an idempotent endomorphism in $\B(\delta)$, there is an object $\im e$ (the image of $e$) in $\uRep(O_\delta)$.  For idempotents $e\in\End_{\B(\delta)}(r)$ and $f\in\End_{\B(\delta)}(s)$, we will identify $\Hom_{\uRep(O_\delta)}(\im e, \im f)=f\Hom_{\B(\delta)}(r,s)e$.  


\section{Indecomposable objects in $\uRep(O_\delta)$}  

In this section we describe a classification of indecomposable objects in $\uRep(O_\delta)$ by partitions.  We will follow the analogous treatment of $\uRep(Gl_\delta)$ in \cite[\S4]{CW}.  In what follows we will use properties of indecomposable objects in Karoubi envelopes found in \cite[Proposition 2.7.1]{CW} without further reference.  

\subsection{Partitions, propagating strands, and symmetric groups}\label{partitions}  A \emph{partition} is an infinite tuple of integers $\lambda=(\lambda_1,\lambda_2,\ldots)$ such that $\lambda_i\geq\lambda_{i+1}\geq 0$ for each $i>0$ and $\lambda_i=0$ for all but finitely many $i$'s.  We will write $\varnothing=(0,\ldots)$.  The \emph{size} of $\lambda$ is $|\lambda|=\sum_{i>0}\lambda_i$.  The \emph{length} of $\lambda$ is the smallest integer $l(\lambda)$ such that $\lambda_{\l(\lambda)+1}=0$.  When $\lambda\not=\varnothing$, we will often write  $\lambda=(\lambda_1,\ldots,\lambda_{\l(\lambda)})$.  
Given two partitions $\lambda$ and $\mu$, we write $\lambda\subset\mu$ to mean $\lambda_i\leq \mu_i$ for all $i>0$.   We identify each partition with its Young diagram in the usual manner and write $\lambda^T$ for the transpose (or conjugate) of $\lambda$.
For example, 
$$(4,3,1)=
{\begin{picture}(55, 10)%
\put(0,-12){\line(1,0){10}}
\put(0,-2){\line(1,0){30}}
\put(0,8){\line(1,0){40}}
\put(0,18){\line(1,0){40}}
\put(0,-12){\line(0,1){30}}
\put(10,-12){\line(0,1){30}}
\put(20,-2){\line(0,1){20}}
\put(30,-2){\line(0,1){20}}
\put(40,8){\line(0,1){10}}
\end{picture}}
\vspace{4mm}
\text{and}
\quad
(4,3,1)^T=(3,2,2,1).
$$


A \emph{propagating strand} in a Brauer diagram is a strand with one top endpoint and one bottom endpoint.  For example, the only propagating stand in the Brauer diagram pictured in \S\ref{Brauer category} is the one corresponding to $\{1,2'\}$.  Given $r\in\Z_{\geq0}$, we let $\Sigma_r$ denote the set of Brauer diagrams of type $r\to r$ with $r$ propagating strands (i.e.~no cups or caps), and let $\k\Sigma_r$ denote the span of these diagrams.  It is easy to see that $\k\Sigma_r$ is a subalgebra of $B_r$ that is isomorphic to the group algebra of the symmetric group on $r$-elements.  Now, let $J\subseteq B_r$ denote the span of all Brauer diagrams with at least one non-propagating strand so that $B_r=\k\Sigma_r\oplus J$ as a vector space.  It is easy to show that $J$ is a two-sided ideal of $B_r$, hence we have an algebra map $\pi:B_r\to\k\Sigma_r$  satisfying $\pi(\sigma)=\sigma$ for each $\sigma\in\Sigma_r$.  


\subsection{Idempotents in $\B(\delta)$}  

It is well known that primitive idempotents in $\k\Sigma_r$ (up to conjugation) are parametrized by partitions of size $r$.  For each partition $\lambda$ of size $r$, let $z_\lambda$ denote the corresponding primitive idempotent in $\k\Sigma_r$.  Note that $z_\lambda$ is only defined up to conjugation.    We should not expect $z_\lambda$ to be primitive when viewed as an element of $B_r$.  Let $z_\lambda=e_1+\cdots+e_k$ be a decomposition of $z_\lambda$ into mutually orthogonal primitive idempotents in $B_r$.  Then $\pi(e_1),\ldots,\pi(e_k)$ are mutually orthogonal idempotents in $\k\Sigma_r$ whose sum is $z_\lambda$.  Since $z_\lambda$ is primitive in $\k\Sigma_r$, there exists a unique $i$ with $\pi(e_i)\not=0$.  Set $e_\lambda=e_i$.  
Of course, this definition of $e_\lambda$ depends on the chosen decomposition of $z_\lambda$ into mutually orthogonal idempotents. However, the conjugacy classes of the idempotents appearing in any such decomposition of $z_\lambda$ are unique. It follows that $e_\lambda$ is a primitive idempotent in $B_r$ which is uniquely defined up to conjugacy. 

\begin{example}\label{e_lambda example}
The partition $\lambda=(2)$ corresponds to the following primitive idempotent in $\k\Sigma_2$:
\[\includegraphics{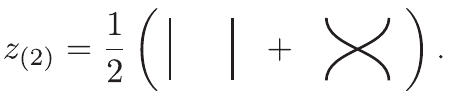}\]
One can check that $z_{(2)}$ is primitive in $B_2(\delta)$ if and only if $\delta=0$.  Hence, $e_{(2)}=z_{(2)}$ in $B_2(0)$. On the other hand, if $\delta\not=0$ we have the following idempotent in $B_2(\delta)$:
\[\includegraphics{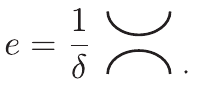}\]
If we set $e_1=e$ and $e_2=z_{(2)}-e$ then one can show $z_{(2)}=e_1+e_2$ is a decomposition of $z_{(2)}$ into mutually orthogonal idempotents. It follows that 
\[\includegraphics{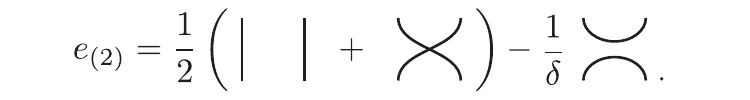}\]
\end{example}

Next we explain how to construct idempotents in $B_r$ corresponding to partitions $\lambda$ with $|\lambda|<r$.  
Write $\cup$ (resp.~$\cap$) for the unique Brauer diagram of type $0\to2$ (resp.~$2\to 0$).  Now, define the following morphisms in $\B(\delta)$ for each $i,r\in\Z_{\geq0}$:
\begin{equation*}
\psi_{r,i}=\id_r\otimes\cup^{\otimes i},
\quad
\phi_{r,i}=\begin{cases}\id_{r-1}\otimes\cap^{\otimes i}\otimes\id_1, & \text{if }r>0;\\
\frac{1}{\delta^i}\,\cap^{\otimes i}, & \text{if }r=0\text{ and }\delta\not=0.
\end{cases}
\end{equation*}
Set $e_\lambda^{(i)}=\psi_{|\lambda|, i}e_\lambda\phi_{|\lambda|,i}$. In other words,
\[\includegraphics{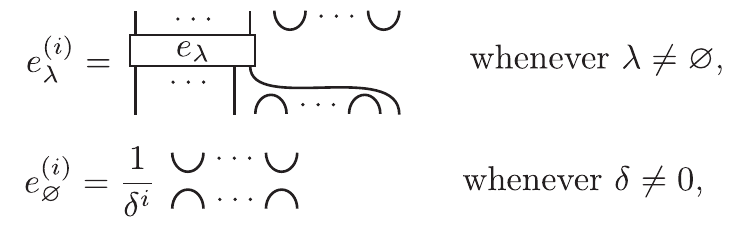}\]
where $i$ cups and $i$ caps are pictured in both diagrams above. 
Notice $e_\lambda^{(i)}\in B_{|\lambda|+2i}$ is defined for all partitions $\lambda$ and all integers $i\geq0$ except for the cases when $\lambda=\varnothing$ and $\delta=0$.  Moreover, since $\phi_{|\lambda|,i}\psi_{|\lambda|,i}=\id_{|\lambda|}$, it follows that $e_\lambda^{(i)}$ is an idempotent whenever it is defined.  

\begin{example}
If $\delta\not=0$, then it follows from Example \ref{e_lambda example} that 
\[\includegraphics{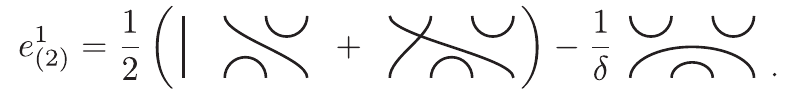}\]
\end{example}

\begin{proposition}\label{e i isomorphic} Given a partition $\lambda$, the objects $\im e_\lambda$ and $\im e_\lambda^{(i)}$ are isomorphic in $\uRep(O_\delta)$ whenever $e_\lambda^{(i)}$ is defined.
\end{proposition}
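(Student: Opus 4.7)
The plan is to exhibit explicit mutually inverse isomorphisms between $\im e_\lambda$ and $\im e_\lambda^{(i)}$ directly, using the morphisms $\psi_{|\lambda|,i}$ and $\phi_{|\lambda|,i}$ that already appear in the definition of $e_\lambda^{(i)}$.

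The key preliminary observation is the diagrammatic identity
\[
\phi_{|\lambda|,i}\circ\psi_{|\lambda|,i}=\id_{|\lambda|}.
\]
When $|\lambda|>0$, this is immediate from the Brauer-diagram picture: stacking $\phi_{|\lambda|,i}=\id_{|\lambda|-1}\otimes\cap^{\otimes i}\otimes\id_1$ above $\psi_{|\lambda|,i}=\id_{|\lambda|}\otimes\cup^{\otimes i}$, the first $|\lambda|-1$ strands propagate straight through, while the last strand follows a zigzag path formed by the successive caps and cups, and no bubbles are produced. When $|\lambda|=0$, one instead computes $\cap^{\otimes i}\circ\cup^{\otimes i}=\delta^{i}\id_0$, and the normalizing factor $\tfrac{1}{\delta^{i}}$ in the definition of $\phi_{0,i}$ (which is exactly why $\delta\neq0$ must be assumed in this case, matching the hypothesis) cancels this scalar.

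Next I would define
\[
\alpha:=\psi_{|\lambda|,i}\circ e_\lambda\in\Hom_{\B(\delta)}(|\lambda|,|\lambda|+2i),\qquad
\beta:=e_\lambda\circ\phi_{|\lambda|,i}\in\Hom_{\B(\delta)}(|\lambda|+2i,|\lambda|).
\]
Using $\phi_{|\lambda|,i}\psi_{|\lambda|,i}=\id_{|\lambda|}$ together with $e_\lambda^{(i)}=\psi_{|\lambda|,i}e_\lambda\phi_{|\lambda|,i}$, a one-line check gives
\[
e_\lambda^{(i)}\alpha e_\lambda=\psi_{|\lambda|,i}e_\lambda\phi_{|\lambda|,i}\psi_{|\lambda|,i}e_\lambda e_\lambda=\psi_{|\lambda|,i}e_\lambda=\alpha,
\]
so that $\alpha$ lies in $e_\lambda^{(i)}\Hom_{\B(\delta)}(|\lambda|,|\lambda|+2i)e_\lambda$ and therefore represents a morphism $\im e_\lambda\to\im e_\lambda^{(i)}$ in $\uRep(O_\delta)$; the symmetric computation shows $\beta$ represents a morphism $\im e_\lambda^{(i)}\to\im e_\lambda$.

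Finally, invoking the identification of hom-spaces in $\uRep(O_\delta)$ recalled at the end of \S\ref{Brauer category}, it suffices to compute
\[
\beta\circ\alpha=e_\lambda\phi_{|\lambda|,i}\psi_{|\lambda|,i}e_\lambda=e_\lambda^{2}=e_\lambda,\qquad
\alpha\circ\beta=\psi_{|\lambda|,i}e_\lambda^{2}\phi_{|\lambda|,i}=\psi_{|\lambda|,i}e_\lambda\phi_{|\lambda|,i}=e_\lambda^{(i)},
\]
which are the identity morphisms on $\im e_\lambda$ and $\im e_\lambda^{(i)}$ respectively. Hence $\alpha$ and $\beta$ are mutually inverse isomorphisms, proving the proposition. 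There is no real obstacle here: once one notices that $\phi_{|\lambda|,i}\psi_{|\lambda|,i}$ (rather than $\psi_{|\lambda|,i}\phi_{|\lambda|,i}$) equals the identity, everything reduces to bookkeeping, and the only delicate point is precisely the $|\lambda|=0$, $\delta=0$ case that the statement explicitly excludes.
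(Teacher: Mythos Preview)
Your proof is correct and takes essentially the same approach as the paper: the paper also uses $\phi_{|\lambda|,i}\psi_{|\lambda|,i}=\id_{|\lambda|}$ (already noted just before the proposition) to check that $\psi_{|\lambda|,i}e_\lambda$ and $e_\lambda\phi_{|\lambda|,i}$ are mutually inverse isomorphisms between $\im e_\lambda$ and $\im e_\lambda^{(i)}$. Your write-up simply spells out the bookkeeping that the paper compresses into one sentence.
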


\begin{proof} Since $\phi_{|\lambda|,i}\psi_{|\lambda|,i}=\id_{|\lambda|}$, we see $\psi_{|\lambda|,i}e_\lambda=e_\lambda^{(i)}\psi_{|\lambda|,i}e_\lambda:\im e_\lambda\to \im e_\lambda^{(i)}$
and $e_\lambda\phi_{|\lambda|,i}=e_\lambda\phi_{|\lambda|,i}e_\lambda^{(i)}:\im e^{(i)}_\lambda\to\im e_\lambda$
are mutually inverse isomorphisms.  
\end{proof}

Since $e_\lambda$ is primitive, the previous proposition implies that $e_\lambda^{(i)}$ is primitive whenever it is defined.  The following theorem states that, up to conjugation, these are all the primitive idempotents in Brauer algebras.   To prove the theorem, use the well-known correspondence for finite dimensional algebras between their primitive idempotents  and simple modules, along with the classification of simple modules in Brauer algebras found, for instance, in \cite[\S2]{CDM}.

\begin{theorem}\label{idempotent classification}
(1) If $\delta\not=0$ or $r$ is odd, then $\{e_\lambda^{(i)}~|~|\lambda|=r-2i, 0\leq i\leq\frac{r}{2}\}$ is a complete set of pairwise non-conjugate primitive idempotents in $B_r(\delta)$.  

(2) If $r>0$ is even, then $\{e_\lambda^{(i)}~|~|\lambda|=r-2i, 0\leq i<\frac{r}{2}\}$ is a complete set of pairwise non-conjugate primitive idempotents in $B_r(0)$.  

\end{theorem}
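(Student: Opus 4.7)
The plan is to invoke the standard bijection, for a finite-dimensional $\k$-algebra $A$, between conjugacy classes of primitive idempotents and isomorphism classes of simple $A$-modules (sending $e$ to $\operatorname{top}(Ae)=Ae/\operatorname{rad}(Ae)$). Under this bijection, a complete set of pairwise non-conjugate primitive idempotents is the same data as a complete set of simple modules. Since the classification of simple $B_r(\delta)$-modules recalled in the introduction (and proved in \cite[\S2]{CDM}) says they are indexed precisely by $\Lambda_r$, and the cardinality of our proposed set $\{e_\lambda^{(i)}\}$ matches $|\Lambda_r|$ in both cases (1) and (2), it suffices to verify two things: (a) each $e_\lambda^{(i)}$ is primitive, and (b) the $e_\lambda^{(i)}$ are pairwise non-conjugate in $B_r$.

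For (a), note that $e_\lambda\in\k\Sigma_{|\lambda|}$ is a primitive idempotent by construction (it is one of the chosen primitive summands, in $B_{|\lambda|}$, of the primitive idempotent $z_\lambda\in\k\Sigma_{|\lambda|}$; primitivity in $B_{|\lambda|}$ is exactly what was arranged in the paragraph preceding Example~\ref{e_lambda example}). Proposition~\ref{e i isomorphic} gives $\im e_\lambda^{(i)}\cong\im e_\lambda$ in $\uRep(O_\delta)$, and since primitivity of an idempotent is equivalent to indecomposability of its image in the Karoubi envelope, we conclude $e_\lambda^{(i)}$ is primitive whenever defined.

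For (b), the main tool is the filtration of $B_r$ by the two-sided ideals $J_{2i}$ spanned by diagrams with at most $r-2i$ propagating strands, together with the generalization of the projection $\pi$ from \S\ref{partitions}: each subquotient $J_{2i}/J_{2i+2}$ carries the structure of a $(B_r,B_r)$-bimodule built from $\k\Sigma_{r-2i}$. By construction, $e_\lambda^{(i)}\in J_{2i}\setminus J_{2i+2}$, so its image in $J_{2i}/J_{2i+2}$ is nonzero while the images of all $e_\mu^{(j)}$ with $j>i$ vanish there; this separates idempotents with different $i$. Within a fixed $i$, the image of $e_\lambda^{(i)}$ in this subquotient, after applying the analogue of $\pi$, equals (up to conjugation) $e_\lambda\in\k\Sigma_{r-2i}$, and distinct partitions $\lambda$ of size $r-2i$ give non-conjugate primitive idempotents in $\k\Sigma_{r-2i}$ by the classical representation theory of the symmetric group. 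Hence non-conjugate pairs $(\lambda,i)$ give non-conjugate idempotents $e_\lambda^{(i)}$ in $B_r$.

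The main obstacle is step (b): one must argue carefully that the auxiliary choices (of decomposition of $z_\lambda$, and of specific primitive summand $e_\lambda=e_i$) do not affect the argument, and that the excluded case ($\lambda=\varnothing,\ \delta=0$) is exactly the one removed in part (2) — which matches the exception $i<r/2$ and explains why the count $|\Lambda_r|$ in the $\delta=0$, $r$ even case drops by one. With these subtleties handled, the count of pairwise non-conjugate primitive idempotents equals the number of simple modules, which forces completeness and finishes the proof.
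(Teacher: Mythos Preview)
Your approach matches the paper's, which is even terser: it simply points to the standard bijection between conjugacy classes of primitive idempotents and simple modules, together with the classification of simple $B_r(\delta)$-modules in \cite[\S2]{CDM}. One minor point: in step (b) for fixed $i$, rather than invoking an unspecified ``analogue of $\pi$'' on the bimodule $J_{2i}/J_{2i+2}$, it is cleaner to use Proposition~\ref{e i isomorphic} (already used in (a)) to reduce to $i=0$, where the genuine algebra map $\pi:B_{|\lambda|}\to\k\Sigma_{|\lambda|}$ shows that conjugate $e_\lambda,e_\mu$ force conjugate $z_\lambda=\pi(e_\lambda),\,z_\mu=\pi(e_\mu)$ and hence $\lambda=\mu$.
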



\subsection{Indecomposable objects}  

Given a partition $\lambda$, we set $R(\lambda)=\im e_\lambda$.  Since $e_\lambda$ is a primitive idempotent in $\B(\delta)$ (defined up to conjugation), $R(\lambda)$ is an indecomposable object in $\uRep(O_\delta)$ (defined up to isomorphism).  

\begin{theorem}\label{indecomposable classification}  The assignment $\lambda\mapsto R(\lambda)$ gives a bijection between the set of all partitions and the set of all isomorphism classes of nonzero indecomposable objects in $\uRep(O_\delta)$. 
\end{theorem}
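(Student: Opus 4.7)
The strategy is to translate the classification into a question about primitive idempotents in Brauer algebras, where Theorem \ref{idempotent classification} and Proposition \ref{e i isomorphic} do the heavy lifting. The main obstacle I anticipate is a single edge case at $\delta=0$ where Proposition \ref{e i isomorphic} does not apply and which will require a direct argument via the projection $\pi$.

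\textbf{Surjectivity.} Since $\B(\delta)$ has finite-dimensional Hom spaces, $\uRep(O_\delta)$ is Krull--Schmidt, so every nonzero indecomposable object is a summand of some object of $\B(\delta)$, hence of the form $\im e$ for a primitive idempotent $e\in B_r(\delta)$ in some degree $r$. Theorem \ref{idempotent classification} tells us $e$ is conjugate to some $e_\lambda^{(i)}$; conjugate idempotents have isomorphic images in the Karoubi envelope; and Proposition \ref{e i isomorphic} identifies $\im e_\lambda^{(i)}\cong R(\lambda)$. The partition $\lambda=\varnothing$, which is the only case omitted by Theorem \ref{idempotent classification} (in the $\delta=0$, $r$ even setting), is nevertheless captured by $R(\varnothing)=\im 1_{B_0}=\unit$, which is itself indecomposable. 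Nonvanishing $R(\lambda)\neq 0$ is immediate, since $e_\lambda$ is primitive.

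\textbf{Injectivity, generic case.} Suppose $R(\lambda)\cong R(\mu)$. If $|\lambda|$ and $|\mu|$ have opposite parity then $\Hom_{\B(\delta)}(|\lambda|,|\mu|)=0$ (no Brauer diagrams exist), so no isomorphism is possible. Otherwise, assume $|\lambda|\leq|\mu|$ and set $r=|\mu|$, $i=(r-|\lambda|)/2$. Provided $e_\lambda^{(i)}$ is defined, Proposition \ref{e i isomorphic} realises both $R(\lambda)$ and $R(\mu)$ as images of the primitive idempotents $e_\lambda^{(i)}$ and $e_\mu=e_\mu^{(0)}$ inside the same algebra $B_r(\delta)$. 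Isomorphism of images of primitive idempotents in a finite-dimensional $\k$-algebra forces those idempotents to be conjugate, and the pairwise non-conjugacy asserted in Theorem \ref{idempotent classification} then gives $(\lambda,i)=(\mu,0)$, whence $\lambda=\mu$.

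\textbf{Exceptional case.} The only remaining case, and the main obstacle, is $\delta=0$, $\lambda=\varnothing$, with $\mu$ a nonempty partition of even size: here $e_\varnothing^{(i)}$ is undefined for $i>0$ and the above reduction is unavailable. My plan is a direct computation using $\pi$. Any $\alpha\in\Hom_{\uRep(O_0)}(\unit,R(\mu))$ arises as $e_\mu\circ\alpha_0$ with $\alpha_0\in\Hom_{\B(0)}(0,|\mu|)$ a linear combination of Brauer diagrams consisting entirely of cups, and similarly $\beta\in\Hom_{\uRep(O_0)}(R(\mu),\unit)$ is a combination of cap-only diagrams pre-composed with $e_\mu$. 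Stacking, the composite $\alpha\circ\beta\in B_{|\mu|}(0)$ is supported on diagrams with no propagating strands, so it lies in the ideal $J$ of \S\ref{partitions}, giving $\pi(\alpha\circ\beta)=0$. But an isomorphism $R(\varnothing)\cong R(\mu)$ would force $\alpha\circ\beta=e_\mu$, contradicting the defining property $\pi(e_\mu)=z_\mu\neq 0$ of $e_\mu$.
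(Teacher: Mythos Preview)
Your proof is correct and follows essentially the same route as the paper: reduce to conjugacy of primitive idempotents in a common Brauer algebra via Proposition~\ref{e i isomorphic} and invoke Theorem~\ref{idempotent classification}, with a separate treatment of the single case $\delta=0$, $\lambda=\varnothing$ where $e_\varnothing^{(i)}$ is undefined. The only noteworthy difference is in that exceptional case: the paper looks at $\beta\circ\alpha\in B_0(0)=\k$ and observes that any composite $\Hom(0,|\mu|)\times\Hom(|\mu|,0)\to B_0(0)$ vanishes for $|\mu|>0$ (all bubbles, $\delta=0$), whereas you look at $\alpha\circ\beta\in B_{|\mu|}(0)$ and use that it lies in $J=\ker\pi$ while $\pi(e_\mu)=z_\mu\neq 0$. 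Both arguments are short and valid; the paper's is marginally more direct, yours makes nice use of the projection $\pi$ already introduced in \S\ref{partitions}.
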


\begin{proof} Every indecomposable object in $\uRep(O_\delta)$ is the image of a primitive idempotent in a Brauer algebra, hence it follows from Theorem \ref{idempotent classification} that the assignment  $\lambda\mapsto R(\lambda)$ is surjective.  To show  injectivity, assume $R(\lambda)\cong R(\mu)$ for some partitions $\lambda$ and $\mu$.  We may assume $|\lambda|\leq |\mu|$.  Since $\Hom(R(\lambda), R(\mu))=e_\mu\Hom(|\lambda|,|\mu|)e_\lambda$, there must be a Brauer diagrams of type $|\lambda|\to|\mu|$, which implies that $|\lambda|$ and $|\mu|$ have the same parity.  Set $i=\frac{|\mu|-|\lambda|}{2}$.  If $\lambda=\varnothing$, then 
$R(\lambda)\cong R(\mu)$ implies that the composition map $\Hom(0,|\mu|)\times\Hom(|\mu|,0)\to B_0(\delta)$ is nonzero.  If $\delta=0$, then this is only possible if $|\mu|=0$, so that $\lambda=\mu$.  Now we may assume $\lambda\not=\varnothing$ or $\delta\not=0$, whence $e_\lambda^{(i)}$ exists.  By Proposition \ref{e i isomorphic} we have $\im e_\lambda^{(i)}\cong\im e_\lambda\cong\im e_\mu$.  Hence $e_\lambda^{(i)}$ and $e_\mu=e_\mu^{(0)}$ are conjugate in $B_{|\mu|}(\delta)$.  Thus $\lambda$=$\mu$ by Theorem \ref{idempotent classification}.
\end{proof}

\subsection{Semisimplicity of $\uRep(O_\delta)$}\label{ss}  In \cite[Theorem 4.8.1]{CW} the known criteria for semisimplicity of the walled Brauer algebras was used to show $\uRep(Gl_\delta)$ is semisimple if and only if $\delta\not\in\Z$.  The semisimplicity of Brauer algebras is worked out in \cite{Wenzl} in the case $\k=\C$ and in \cite{Rui} for arbitrary $\k$.  Adjusting the proof of \cite[Theorem 4.8.1]{CW} accordingly, we have the following theorem (compare with \cite[Th\'eor\`eme 9.7]{Del07}).

\begin{theorem}\label{semisimple}
$\uRep(O_\delta)$ is semisimple if and only if $\delta\not\in\Z$.  
\end{theorem}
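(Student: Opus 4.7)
My plan mirrors the argument used for $\uRep(Gl_\delta)$ in \cite[Theorem 4.8.1]{CW}: reduce semisimplicity of the category to semisimplicity of its endomorphism algebras, identify those endomorphism algebras with (corners of) the Brauer algebras $B_r(\delta)$, and then invoke the known semisimplicity criterion of Wenzl \cite{Wenzl} and Rui \cite{Rui} for the Brauer algebras themselves. The only substantive change in going from the $Gl$-case to the $O$-case is the replacement of the walled Brauer algebras by the ordinary Brauer algebras, so the algebraic input comes from Wenzl--Rui rather than from the walled-Brauer literature.

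First, I would establish the intermediate equivalence that $\uRep(O_\delta)$ is semisimple if and only if $B_r(\delta)$ is semisimple for every $r\geq 0$. The forward direction is immediate, since $B_r(\delta)=\End_{\uRep(O_\delta)}(r)$ sits inside $\uRep(O_\delta)$ as the endomorphism algebra of an object, and endomorphism algebras of objects in a semisimple $\k$-linear Karoubi-closed additive category are always semisimple. For the reverse direction, I would use that every object of $\uRep(O_\delta)$ is, up to isomorphism, a direct summand of some finite direct sum of objects $r\in\Z_{\geq 0}$ from $\B(\delta)$. Consequently every endomorphism algebra in $\uRep(O_\delta)$ is of the form $eMe$, where $M$ is a matrix-style algebra built from the Hom-spaces $\Hom_{\B(\delta)}(r,s)$ with diagonal blocks $B_r(\delta)$ and $e$ is an idempotent. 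Since corners and matrix algebras preserve semisimplicity, semisimplicity of all $B_r(\delta)$ forces every $\End_{\uRep(O_\delta)}(X)$ to be semisimple; combined with finite-dimensionality of all Hom-spaces and Karoubi-completeness, this gives that every object decomposes as a finite direct sum of simples.

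Given this reduction, the theorem follows by quoting the Brauer-algebra semisimplicity criterion: if $\delta\notin\Z$, then each $B_r(\delta)$ is semisimple \cite{Wenzl, Rui}, while if $\delta\in\Z$ the same references supply some $r$ (in fact all sufficiently large $r$) for which $B_r(\delta)$ fails to be semisimple. The part requiring the most care is the reduction step, i.e.\ verifying that semisimplicity of a $\k$-linear idempotent-complete additive category really is equivalent to semisimplicity of all its endomorphism algebras in the presence of finite-dimensional Hom-spaces; this is the formal content that the authors are silently borrowing from \cite[\S2.5--2.6]{CW}. Once this formal reduction is in place there is no further difficulty, which is precisely why the authors can dispatch the theorem by saying ``adjust the proof of \cite[Theorem 4.8.1]{CW} accordingly''.
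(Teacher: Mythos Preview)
Your proposal is correct and follows exactly the approach indicated in the paper: reduce semisimplicity of $\uRep(O_\delta)$ to that of the Brauer algebras $B_r(\delta)$ by the same formal argument as in \cite[Theorem~4.8.1]{CW}, and then invoke the Wenzl--Rui semisimplicity criterion in place of the walled Brauer criterion. This is precisely what the paper means by ``adjusting the proof of \cite[Theorem~4.8.1]{CW} accordingly.''
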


\section{Cap diagrams and the lifting map}\label{lifts}

In this section we will describe a lifting map for $\uRep(O_\delta)$ in the same manner as \cite[\S6]{CW} and \cite[\S3.2]{CO11}.  This lifting map will be used in \S\ref{tensor decomposition} to prove a formula for decomposing the tensor product $R(\lambda)\otimes R(\mu)$ in $\uRep(O_\delta)$ for generic values of $\delta$ (i.e.~for the given partitions $\lambda$ and $\mu$, the formula holds for all but finitely many $\delta\in\k$).  Moreover, the lifting map is the key ingredient for decomposing $R(\lambda)\otimes R(\mu)$ when $\delta$ is not generic.  The lifting map can be computed concretely using the cap diagrams described in \S\ref{cap diagrams}.  First, we define the lifting map in the language of additive Grothendieck rings.

\subsection{Additive Grothendieck rings}  For the remainder of this paper, we let $t$ denote an indeterminate.  Write $K_\delta$ (resp.~$K_t$) for the additive Grothendieck rings of $\uRep(O_\delta)$ over $\k$ (resp.~$\uRep(O_t)$ over $\k(t)$, the field of fractions in $t$).  More explicitly, we take elements of $K_\delta$ (resp.~$K_t$) to be formal $\Z$-linear combinations of partitions  with multiplication defined by setting $\lambda\mu=\nu^{(1)}+\cdots+\nu^{(k)}$ whenever $R(\lambda)\otimes R(\mu)=R(\nu^{(1)})\oplus\cdots\oplus R(\nu^{(k)})$ in $\uRep(O_\delta)$ (resp.~$\uRep(O_t)$).   Finally, we let $(\cdot,\cdot)_\delta$ (resp.~$(\cdot,\cdot)_t$) denote the bilinear form on $K_\delta$ (resp.~$K_t$) defined on partitions by setting $(\lambda,\mu)_\delta$ (resp.~$(\lambda,\mu)_t$) equal to $\dim_\k\Hom_{\uRep(O_\delta)}(R(\lambda),R(\mu))$ (resp.~$\dim_{\k(t)}\Hom_{\uRep(O_t)}(R(\lambda),R(\mu))$).  The following observation will be useful later.  For a proof, see \cite[Proposition 6.1.2(2)]{CW}.

\begin{equation}\label{generic form}
(\lambda,\mu)_t=\begin{cases}
1, & \text{if }\lambda=\mu;\\
0, & \text{if }\lambda\not=\mu.
\end{cases}
\end{equation}

\subsection{Lifting} 
Fix a partition $\lambda$ and consider the idempotent $e_\lambda\in B_{|\lambda|}(\delta)$. By \cite[Theorem A.3]{CO11} there exists an idempotent $\tilde{e}_\lambda=\sum_g a_g g$ with $a_g\in\k[[t-\delta]]$ for each Brauer diagram $g$ of type $|\lambda|\to|\lambda|$ such that $\tilde{e}_\lambda|_{t=\delta}=e_\lambda$. We view $\tilde{e}_\lambda$ as an idempotent in the Brauer algebra $B_{|\lambda|}(t)$ with coefficients in the field $\k((t-\delta))$.  Denote by $d_{\lambda,\mu}$ the number of idempotents corresponding to the partition $\mu$ in any decomposition of $\tilde{e}_\lambda$ into a sum of mutually orthogonal primitive idempotents in $B_{|\lambda|}(t)$. Now, set 
\begin{equation*}\label{lift def} 
    \lift_\delta(\lambda)=\sum_{\mu}d_{\lambda,\mu}\mu.
\end{equation*}
The lifting map satisfies the following properties:

\begin{theorem}\label{lift}
(1) The assignment $\lambda\mapsto \lift_\delta(\lambda)$ prescribes a $\k$-algebra isomorphism $K_\delta\to K_t$  which respects bilinear forms for all $\delta\in\k$.  

(2) For any $\lambda$, $d_{\lambda,\lambda}=1$.  Moreover, $d_{\lambda,\mu}=0$ unless $\mu=\lambda$ or $|\mu|<|\lambda|$.  

(3) For any $\lambda$, $\lift_\delta(\lambda)=\lambda$ for all but finitely many $\delta\in\k$.

(4) $(\lambda,\mu)_\delta=\sum_{\nu}d_{\lambda,\nu}d_{\mu,\nu}$ for all partitions $\lambda$ and $\mu$.  

\end{theorem}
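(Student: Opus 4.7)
The plan is to prove (2) first as the technical core, then deduce (1) in three separate steps, and finally extract (3) and (4) as consequences. The main obstacle will be the rank-one argument in (2) that simultaneously establishes $d_{\lambda,\lambda}=1$ and rules out $d_{\lambda,\mu}\neq 0$ for same-size partitions $\mu\neq\lambda$.

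For (2), since $\tilde{e}_\lambda\in B_{|\lambda|}(t)$, Theorem \ref{idempotent classification} forces every primitive summand to correspond to a partition $\mu$ with $|\mu|+2i=|\lambda|$ for some $i\geq 0$, so $|\mu|\leq|\lambda|$. To handle the same-size case I extend the algebra map $\pi:B_r\to\k\Sigma_r$ of \S\ref{partitions} by base change to $\pi:B_{|\lambda|}(t)\to\k((t-\delta))\Sigma_{|\lambda|}$. This kills every shifted idempotent $e_\mu^{(i)}$ with $i>0$, so $\pi(\tilde{e}_\lambda)$ is a sum of $d_{\lambda,\mu}$ primitive idempotents of type $\mu$ for each $|\mu|=|\lambda|$. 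Specialization at $t=\delta$ yields $\pi(e_\lambda)=z_\lambda$, a rank-one primitive idempotent in the $\lambda$-block of $\k\Sigma_{|\lambda|}$. Since symmetric group algebras in characteristic zero are absolutely semisimple, so block decompositions are stable under base extension, and since the rank of an idempotent equals its trace and is preserved under specialization, $\pi(\tilde{e}_\lambda)$ must lie entirely in the $\lambda$-block and have rank one. This forces $d_{\lambda,\lambda}=1$ and $d_{\lambda,\mu}=0$ for $\mu\neq\lambda$ of the same size.

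Part (1) then splits into three claims. Bijectivity is immediate from (2): the matrix of $\lift_\delta$ is upper unitriangular with respect to any total order refining $|\mu|\leq|\lambda|$. For multiplicativity, the key observation is that for any idempotent $f\in B_r(\delta)$ with lift $\tilde{f}\in B_r(t)$, the class $[\im\tilde{f}]\in K_t$ equals $\lift_\delta([\im f])$: this is the definition when $f=e_\lambda$, and it extends to arbitrary $f$ by decomposing $f$ into primitives, lifting each summand, and invoking Proposition \ref{e i isomorphic} to identify $\im\tilde{e}_\mu^{(k)}$ with $\im\tilde{e}_\mu$. Applying this to $f=e_\lambda\otimes e_\mu$, whose lift is $\tilde{e}_\lambda\otimes\tilde{e}_\mu$, and using $\im(\tilde{e}_\lambda\otimes\tilde{e}_\mu)=\im\tilde{e}_\lambda\otimes\im\tilde{e}_\mu$, gives $\lift_\delta(\lambda\cdot\mu)=\lift_\delta(\lambda)\cdot\lift_\delta(\mu)$. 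For compatibility with bilinear forms, $\dim_\k(e_\mu\Hom_{\B(\delta)}(|\lambda|,|\mu|)e_\lambda)$ equals the trace of the idempotent endomorphism $x\mapsto e_\mu xe_\lambda$ on the fixed $\k$-span of Brauer diagrams of type $|\lambda|\to|\mu|$; performing the same computation with $\tilde{e}_\lambda,\tilde{e}_\mu$ yields $(\lift_\delta(\lambda),\lift_\delta(\mu))_t$, and the two traces agree by specialization.

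For (3), if $B_{|\lambda|}(\delta)$ is semisimple then $e_\lambda$ is primitive (Theorem \ref{idempotent classification}), and a standard lifting-of-idempotents argument shows $\tilde{e}_\lambda$ remains primitive in the semisimple base change $B_{|\lambda|}(t)$, giving $\lift_\delta(\lambda)=\lambda$. Since $B_r(\delta)$ is semisimple for all but finitely many $\delta\in\k$ (the per-$r$ criterion underlying Theorem \ref{semisimple}), this holds for all but finitely many $\delta$. Finally, (4) is immediate from (1) and (\ref{generic form}):
\begin{equation*}
(\lambda,\mu)_\delta=(\lift_\delta(\lambda),\lift_\delta(\mu))_t=\sum_{\nu,\nu'}d_{\lambda,\nu}d_{\mu,\nu'}(\nu,\nu')_t=\sum_\nu d_{\lambda,\nu}d_{\mu,\nu}.
\end{equation*}
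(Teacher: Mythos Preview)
Your proof is correct and follows essentially the same approach as the paper, which defers entirely to the analogous arguments in \cite{CO11} and \cite{CW}; you have in effect reconstructed those arguments (the quotient $\pi$ to $\k\Sigma_r$ together with rank/trace preservation for (2), tensoring lifts for multiplicativity, the idempotent-trace computation for the bilinear form, and generic semisimplicity of $B_r(\delta)$ for (3)). One minor point: the paper also flags the well-definedness of $\lift_\delta$ independent of the choices of $e_\lambda$ and $\tilde e_\lambda$ (via \cite[Theorem~A.2]{CO11}) and the passage from $\k((t-\delta))$ to $\k(t)$ in the target, both of which you use implicitly but do not state.
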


\begin{proof} 
The fact that $\lift_\delta$ is well-defined independent from the choice of representatives for $e_\lambda$ and $\tilde{e}_\lambda$ follows from \cite[Theorem A.2]{CO11}. To show we are allowed $K_t$ as the target space as opposed to the additive Grothendieck ring of $\uRep(O_t)$ over $\k((t-\delta))$, we refer the reader to the proof of \cite[Proposition 6.1.2]{CW}. 
Now, for parts (2) and (3) we refer the reader to the proofs of the analogous statements for $\uRep(S_t)$ found in \cite[Theorem 3.12(3)-(4)]{CO11}.  To show that $\lift_\delta$ is an algebra homomorphism which respects bilinear forms, see the proofs of \cite[Theorem 3.12(1)\&(5)]{CO11}.  The fact that $\lift_\delta$ is an isomorphism follows from part (2), which completes the proof of part (1).  Part (4) is a consequence of part (1) and equation (\ref{generic form}) (see \cite[Corollary 6.2.4]{CW}).
\end{proof}

\subsection{Cap diagrams}\label{cap diagrams}  The goal of this subsection is to describe how to compute the numbers $d_{\lambda,\mu}$.  Since $\uRep(O_\delta)$ is semisimple whenever $\delta\not\in\Z$ (Theorem \ref{semisimple}) it follows from Theorem \ref{lift}(4) that $d_{\lambda,\mu}=\begin{cases}
1, & \text{if }\lambda=\mu;\\
0, & \text{if }\lambda\not=\mu.
\end{cases}$ whenever $\delta\not\in\Z$.  When $\delta\in\Z$, we will compute $d_{\lambda,\mu}$ using the cap diagrams found in \cite{Lejczyk-Stroppel} and \cite{Ehrig-Stroppel}.  To describe these diagrams, first for a partition $\lambda$ we set 
$$X_\lambda=\left\{\lambda^T_i-i+1-\frac{\delta}{2}\suchthat i>0\right\}\subseteq\frac{\delta}{2}+\Z.$$
Next, the \emph{weight diagram} for $\lambda$ is obtained by starting with a line with vertices labeled by the nonnegative elements of $\left(\frac{\delta}{2}+\Z\right)$, and marking the vertex labeled by $i$ with a 
$$\left\{\begin{array}{cl}
\diam,	& \text{ if }i=0\in X_\lambda;\\
\bigo,	& \text{ if }i\not\in X_\lambda	\text{ and }-i\not\in X_\lambda;\\
\up,		& \text{ if }i\in X_\lambda		\text{ and }-i\not\in X_\lambda;\\
\down, 	& \text{ if }i\not\in X_\lambda	\text{ and }-i\in X_\lambda;\\
\cross,	& \text{ if }i\in X_\lambda		\text{ and }-i\in X_\lambda\text{ and }i\not=0.
\end{array}\right.$$

\begin{example}\label{wt}
Here are the weight diagrams for $(4,3,3,2,1)$ with $\delta=2$ and $(6,6,6,6,4,2)$ with $\delta=3$ respectively:
$$\includegraphics{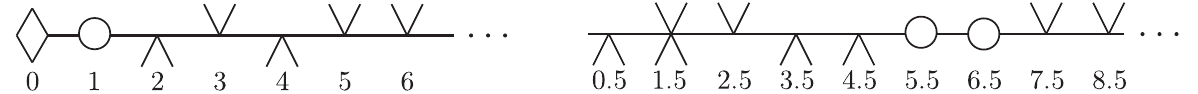}$$
\end{example}

\begin{remark}  It turns out that the number of $\bigo$'s minus the number of $\cross$'s in a weight diagram is always $\left\lfloor\delta/2\right\rfloor$.  Indeed, it is easy to verify this for the partition $\varnothing$, and adding a box to any partition does not change the difference between the number of $\bigo$'s and $\cross$'s (see Proposition \ref{shifts}).  With this in mind, one can always recover $\delta$ and the partition from its weight diagram.  
\end{remark}

The \emph{cap diagram} of $\lambda$ is constructed from its weight diagram in two stages:\begin{enumerate}
\item[]\begin{enumerate}
\item[Stage 1:] Connect vertex $j$ to vertex $i$ with a cap if $i<j$, $i$ is marked with a $\down$ or a $\diam$, $j$ is marked with a $\up$, and all vertices between $i$ and $j$ are either marked with a $\cross$ a $\bigo$ or already connected to another vertex by a cap.  Continue connecting such pairs of vertices with caps until there are no such pairs remaining.  

\item[Stage 2:] Connect vertex $i$ to vertex $j$ with a dotted cap if both $i$ and $j$ are marked with an $\up$, and all other vertices to the left of $i$ and $j$ are either marked with a $\cross$ a $\bigo$ or already connected to another vertex by a cap (dotted or un-dotted).  Continue connecting such pairs of vertices with dotted caps until there are no such pairs remaining.

\end{enumerate}
\end{enumerate}

\begin{example}\label{curl}
Here are the cap diagrams (minus the vertex labels) associated to the weight diagrams pictured in  Example \ref{wt}:
$$\includegraphics{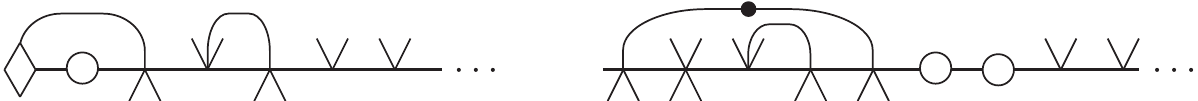}$$
\end{example}

For a given partition $\lambda$, we let $\Cp_\lambda$ denote the set of all pairs $(i,j)$ with $i<j$ such that there is a cap (dotted or un-dotted) connecting vertices $i$ and $j$ in the cap diagram of $\lambda$.  
Given $A\subseteq\Cp_\lambda$, we let $\rev_{A}(\lambda)$ denote the partition whose weight diagram is obtained by reversing the orientation on each cap (dotted or un-dotted) in $A$.  More precisely, the weight diagram of $\rev_{A}(\lambda)$ is obtained from the weight diagram of $\lambda$ by changing the marks at vertices $i$ and $j$ whenever $(i,j)\in A$ from $\up, \down, \diam$ to $\down, \up, \diam$ respectively.  Similarly, for any $\lambda$ whose weight diagram has a $\up,\down$, or $\diam$ at the $i$th vertex, we write $\rev_i(\lambda)$ for the partition whose diagram is obtained from $\lambda$ by replacing the $i$th vertex with $\down, \up$, or $\diam$ respectively.   
  Finally, given partitions $\lambda$ and $\mu$ we set
$$d'_{\lambda,\mu}=
\begin{cases} 
1, & \text{if }\mu=\rev_A(\lambda)\text{ for some }A\subseteq\Cp_\lambda;\\
0, & \text{otherwise.}
\end{cases}$$

The Brauer algebra $B_r(\delta)$ is quasi-hereditary when $\delta\not=0$ and cellular when $\delta=0$ \cite{GL}. The decomposition numbers for $B_r(\delta)$ were originally determined in \cite{Mar}. In \cite{CD} these decomposition numbers are described in terms of so-called curl diagrams. We can recover their curl diagrams from our cap diagrams by replacing all dotted caps with curls. With this in mind, it follows from \cite[Theorem 5.8]{CD} that $d'_{\lambda,\mu}$ are the decomposition numbers for Brauer algebras when $\delta\in\Z$. This will allow us to deduce $d'_{\lambda,\mu}=d_{\lambda,\mu}$ (see Corollary \ref{dees}). We will also need the following:

\begin{proposition}\label{rev size} Suppose $\lambda$ and $\mu$ are partitions and suppose the weight diagram of $\lambda$ has a $\down$ or $\up$ at the $i$th vertex.

(1) $|\rev_{i}(\lambda)|
=\begin{cases}
|\lambda|+2i, & \text{if $\lambda$ has a $\down$ at vertex $i$};\\
|\lambda|-2i, & \text{if $\lambda$ has a $\up$ at vertex $i$}.
\end{cases}$

(2) $d'_{\lambda,\lambda}=1$.  Moreover, $d'_{\lambda,\mu}=0$ unless $\mu=\lambda$ or $|\mu|=|\lambda|-2i$ for some $i\in\Z_{>0}$.  

\end{proposition}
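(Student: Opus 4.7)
The plan is to translate sizes of partitions into sums over the sorted elements of the $X$-sets, where the action of $\rev$ becomes transparent. The key identity I would establish first is
$$|\mu| = \sum_{j=1}^{N} \bigl(x_j^{(\mu)} - x_j^{(\varnothing)}\bigr)$$
for $N$ sufficiently large, where $x_j^{(\mu)}$ denotes the $j$-th largest element of $X_\mu$. This is immediate from $x_j^{(\mu)} = \mu_j^T - j + 1 - \delta/2$ together with $\mu_j^T = 0$ for $j$ large, so that the $x_j^{(\varnothing)}$ term cancels the tail.

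For part (1) in the $\down$-case, the definition of $\rev_i$ translates on the $X$-side into $X_{\rev_i(\lambda)} = (X_\lambda\setminus\{-i\})\cup\{i\}$. Choosing $N$ large enough that both $i$ and $-i$ lie among the top $N$ elements of each $X$-set (and that both tails agree with $X_\varnothing$ beyond $N$), one sees that $\sum_{j=1}^{N} x_j^{(\rev_i\lambda)} - \sum_{j=1}^{N} x_j^{(\lambda)} = i - (-i) = 2i$, which by the identity above is $|\rev_i(\lambda)| - |\lambda|$. The $\up$-case is the mirror image.

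For part (2), $d'_{\lambda,\lambda}=1$ will fall out of the general size computation: any nonempty $A$ will strictly decrease $|\lambda|$, so $\rev_A(\lambda)\neq\lambda$, leaving $A=\varnothing$ as the unique subset with $\rev_A(\lambda)=\lambda$. For the general claim I would compute the change cap-by-cap, summing contributions to $\sum_{j=1}^N x_j^{(\rev_A\lambda)} - \sum_{j=1}^N x_j^{(\lambda)}$. A short case analysis according to the marks at the two endpoints in $\lambda$---un-dotted $\down$-$\up$, un-dotted $\diam$-$\up$, and dotted $\up$-$\up$---yields per-cap contributions of $2(p-q)$, $-2q$, and $-2(p+q)$ respectively; each is strictly negative, so the total size change is $-2i$ for some integer $i>0$.

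The main obstacle is really just bookkeeping: one must correctly enumerate the three cap types (remembering that $\diam$ is fixed by $\rev$, so a $\diam$-endpoint contributes nothing to the $X$-set change), and verify that $i$ remains a positive integer when $\delta$ is odd. The latter is automatic: the $\diam$-type contribution is vacuous in the odd case, while the other two contributions are differences $q-p$ or sums $p+q$ of half-integers of the same parity, hence integers. Notably, the argument is insensitive to whether the intermediate weight diagrams encountered when decomposing $\rev_A$ vertex-by-vertex are realized by genuine partitions, since the identity above is a purely formal statement about finite modifications of $X_\varnothing$.
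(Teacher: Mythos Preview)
Your proposal is correct and follows essentially the same route as the paper: both proofs compute $|\lambda|$ via the identity $|\lambda|=\sum_l\bigl(x_l+l-1+\tfrac{\delta}{2}\bigr)$ (equivalently your $\sum_j(x_j^{(\lambda)}-x_j^{(\varnothing)})$) and read off the effect of replacing $x_M$ by $-x_M$. The only difference is that the paper dispatches part~(2) with the single line ``Part (2) follows from part (1)'', whereas you spell out the cap-by-cap contributions $2(p-q)$, $-2q$, $-2(p+q)$ and the integrality check explicitly; your added remark that the intermediate $X$-sets need not correspond to partitions is a fair point that the paper leaves implicit.
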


\begin{proof} Part (2) follows from part (1).  For part (1), Set $x_l=\lambda^T_l-l+1-\frac{\delta}{2}$ for each $l>0$ so that $X_\lambda=\{x_1,x_2,\ldots\}$.  Since there is a $\down$ or $\up$ at the $i$th vertex, there exists an integer $M>0$ with 
$x_M=\begin{cases}
-i, &\text{if $\lambda$ has a $\down$ at vertex $i$};\\
i, & \text{if $\lambda$ has a $\up$ at vertex $i$}. 
\end{cases}$ 
Now, $X_{\rev_{i}(\lambda)}$ is obtained from $X_\lambda$ by replacing $x_M$  with $-x_M$.  The desired formula follows by examining the resulting effect on $|\lambda|=\sum_{l>0}(x_l+l-1+\frac{\delta}{2}$). 
\end{proof}

\begin{theorem}\label{d'd'} $(\lambda,\mu)_\delta=\sum_{\nu}d'_{\lambda,\nu}d'_{\mu,\nu}$ for all partitions $\lambda$ and $\mu$ whenever $\delta\in\Z$.
\end{theorem}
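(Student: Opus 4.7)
The plan is to prove the stronger equality $d_{\lambda,\mu}=d'_{\lambda,\mu}$ for all partitions $\lambda,\mu$ when $\delta\in\Z$ (this is what the paper will later label Corollary \ref{dees}). Once that identity is established, the desired formula is obtained by substituting $d=d'$ into Theorem \ref{lift}(4). Both sides will be matched with the decomposition multiplicity $[\Delta(\mu):L(\lambda)]$ in the Brauer algebra $B_{|\lambda|}(\delta)$, where $\Delta(\mu)$ is the cell module and $L(\lambda)$ the simple head of the indecomposable projective $B_{|\lambda|}(\delta)e_\lambda$.

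To identify $d_{\lambda,\mu}$ with $[\Delta(\mu):L(\lambda)]$, I would exploit the semisimplicity of $B_{|\lambda|}(t)$ over $\k((t-\delta))$. Writing $\tilde{e}_\lambda=\sum_j\tilde{f}_j$ as a sum of mutually orthogonal primitive idempotents, the number of $\tilde{f}_j$ of type $\mu$ equals $\dim_{\k((t-\delta))}\tilde{e}_\lambda V_t(\mu)$, where $V_t(\mu)$ denotes the simple $B_{|\lambda|}(t)$-module indexed by $\mu$. The cellularity of Brauer algebras \cite{GL} supplies an integral $\k[[t-\delta]]$-form of $V_t(\mu)$ whose reduction at $t=\delta$ is the cell module $\Delta(\mu)$; since $\tilde{e}_\lambda$ specializes to $e_\lambda$, the passage to the fiber gives
$$d_{\lambda,\mu}=\dim_{\k((t-\delta))}\tilde{e}_\lambda V_t(\mu)=\dim_\k e_\lambda\Delta(\mu)=[\Delta(\mu):L(\lambda)],$$
the final equality being the standard identity $\dim_\k eM=[M:L(\lambda)]$, valid because the Brauer algebra is split over $\k$ and $\End L(\lambda)=\k$.

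For the other side, $d'_{\lambda,\mu}$ computes the same decomposition number: the cap diagrams of \S\ref{cap diagrams} become the curl diagrams of \cite{CD} upon replacing every dotted cap by a curl, and \cite[Theorem 5.8]{CD} asserts that the resulting combinatorics produces precisely the multiplicities $[\Delta(\mu):L(\lambda)]$. Comparing the two descriptions yields $d_{\lambda,\mu}=d'_{\lambda,\mu}$, and the theorem now follows at once from Theorem \ref{lift}(4).

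The main obstacle is bookkeeping rather than conceptual. One must verify carefully that the labelling of simples of $B_{|\lambda|}(\delta)$ by partitions via the idempotents $e_\lambda$ agrees on the nose with the labelling used in \cite{CD} via weight and curl diagrams (without sneaking in a transpose, a shift, or a parity swap), and that the integral form used to specialize $V_t(\mu)$ really is the cell module in the sense of \cite{GL}. Once these alignments are checked, the two identifications plug together cleanly and Theorem \ref{lift}(4) delivers the formula.
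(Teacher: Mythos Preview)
Your argument is correct but inverts the paper's logical order. The paper proves Theorem~\ref{d'd'} \emph{directly} by invoking the Cartan-matrix identity for cellular algebras: for $\delta\neq 0$ it identifies $(\lambda,\mu)_\delta=\dim_\k e_\mu^{(i)}B_{|\lambda|}e_\lambda=\dim_\k\Hom_{B_{|\lambda|}}(e_\lambda B_{|\lambda|},e_\mu^{(i)}B_{|\lambda|})$ and then applies \cite[Theorem 3.7(iii)]{GL} together with \cite[Theorem 5.8]{CD} to rewrite this as $\sum_\nu d'_{\lambda,\nu}d'_{\mu,\nu}$; the case $\delta=0$ needs separate care because $e_\varnothing^{(i)}$ is undefined. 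Only afterwards does the paper deduce $d_{\lambda,\mu}=d'_{\lambda,\mu}$ (Corollary~\ref{dees}) by the inductive comparison of Theorem~\ref{lift}(2)\&(4) with Proposition~\ref{rev size}(2) and the just-proved Theorem~\ref{d'd'}. Your route instead establishes $d_{\lambda,\mu}=[\Delta(\mu):L(\lambda)]=d'_{\lambda,\mu}$ in one stroke via the integral cell-module specialisation, and then reads off Theorem~\ref{d'd'} from Theorem~\ref{lift}(4). Your approach is arguably cleaner---it bypasses the Cartan-matrix formula, avoids the projective module $e_\mu^{(i)}B_{|\lambda|}$ altogether, and hence sidesteps the $\delta=0$ complication uniformly---at the cost of making explicit the flatness/specialisation argument for $\tilde{e}_\lambda$ acting on the $\k[[t-\delta]]$-form of the cell module. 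The labelling caveat you flag (that the $e_\lambda$--parametrisation of simples matches the one in \cite{CD}) is genuine but routine, and is implicitly used by the paper as well.
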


\begin{proof}
It follows from Theorem \ref{lift}(4) that the statement of the theorem is symmetric in $\lambda$ and $\mu$, hence we may assume $|\mu|\leq|\lambda|$. Since $\Hom_{\B(\delta)}(|\lambda|,|\mu|)=0$ unless $|\lambda|-|\mu|$ is divisible by 2, it follows that $(\lambda,\mu)_\delta=0$ unless $|\lambda|=|\mu|+2i$ for some $i\in\Z_{\geq 0}$. Thus, by Proposition \ref{rev size}(2) it suffices to consider the case when $|\lambda|=|\mu|+2i$. 

Suppose $\delta\not=0$. Then $B_{|\lambda|}$ is a quasi-hereditary (hence cellular) algebra (see \cite{GL}) with decomposition numbers given by $d'_{\lambda,\mu}$ (see \cite[Theorem 5.8]{CD}).  In particular, by \cite[Theorem 3.7(iii)]{GL} the projective $B_{|\lambda|}$-modules $e_\lambda B_{|\lambda|}$ and $e_\mu^{(i)}B_{|\lambda|}$ satisfy the following:
\[\dim_\k\Hom_{B_{|\lambda|}}(e_\lambda B_{|\lambda|},e_\mu^{(i)}B_{|\lambda|})=\sum_{\nu}d'_{\lambda,\nu}d'_{\mu,\nu}.\]
On the other hand,
\[\Hom_{B_{|\lambda|}}(e_\lambda B_{|\lambda|},e_\mu^{(i)}B_{|\lambda|})=e_\mu^{(i)}B_{|\lambda|}e_\lambda=\Hom_{\uRep(O_\delta)}(R(\lambda),R(\mu)).\] 
It follows that $(\lambda,\mu)_\delta=\sum_{\nu}d'_{\lambda,\nu}d'_{\mu,\nu}$ in this case.

The case when $\delta=0$ is similar, but requires a bit more care. One can prove these cases with a straightforward modification of the proof of \cite[Theorem 6.4.1]{CW}.
\end{proof}

\begin{corollary}\label{dees} (Compare with \cite[Corollary 6.4.2]{CW}) $d'_{\lambda,\mu}=d_{\lambda,\mu}$ for all partitions $\lambda$ and $\mu$ whenever $\delta\in\Z$.  
\end{corollary}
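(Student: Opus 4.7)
The plan is a standard Cholesky-uniqueness argument: the two identities
\[
(\lambda,\mu)_\delta=\sum_\nu d_{\lambda,\nu}d_{\mu,\nu}=\sum_\nu d'_{\lambda,\nu}d'_{\mu,\nu}
\]
provided by Theorem \ref{lift}(4) and Theorem \ref{d'd'} say that $D=(d_{\lambda,\mu})$ and $D'=(d'_{\lambda,\mu})$ produce the same ``Gram matrix'' $(\lambda,\mu)_\delta$. Moreover, Theorem \ref{lift}(2) and Proposition \ref{rev size}(2) tell us that $D$ and $D'$ share the same triangularity: diagonal entries equal $1$, and $d_{\lambda,\mu}=d'_{\lambda,\mu}=0$ unless $|\mu|\leq|\lambda|$. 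In this setting the classical uniqueness of the Cholesky factor forces $D=D'$; I would carry this out as an explicit induction on $N=|\lambda|$, paralleling \cite[Corollary 6.4.2]{CW}.

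For the inductive step, fix $\lambda$ with $|\lambda|=N$ and assume $d_{\lambda',\nu}=d'_{\lambda',\nu}$ for every partition $\lambda'$ of size strictly less than $N$ and every $\nu$. By the triangularity it suffices to prove $d_{\lambda,\mu}=d'_{\lambda,\mu}$ for $\mu$ with $|\mu|<N$. Equating the two expressions for $(\lambda,\mu)_\delta$ and using the inductive hypothesis to replace each $d'_{\mu,\nu}$ by $d_{\mu,\nu}$ in the right-hand sum, one obtains
\[
\sum_{\nu}\bigl(d_{\lambda,\nu}-d'_{\lambda,\nu}\bigr)\,d_{\mu,\nu}=0 \qquad\text{for every $\mu$ with $|\mu|<N$.}
\]
Indices $\nu$ with $|\nu|\geq N$ contribute zero on both sides (for $|\nu|=N$ both factors equal $\delta_{\lambda,\nu}$, and both vanish for $|\nu|>N$), so the equation genuinely lives inside the finite-dimensional vector space spanned by partitions of size $<N$.

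To finish, ordering partitions by increasing size (with any tie-break) makes the square matrix $D^{<N}:=(d_{\mu,\nu})_{|\mu|,|\nu|<N}$ lower unitriangular, and in particular invertible over $\k$. The displayed equations then read $(d_{\lambda,\cdot}-d'_{\lambda,\cdot})\cdot(D^{<N})^{T}=0$, forcing $d_{\lambda,\nu}=d'_{\lambda,\nu}$ for all $\nu$ with $|\nu|<N$ and completing the induction. The only real obstacle is bookkeeping around the triangularity: one must check that the chosen ordering truly makes $D^{<N}$ lower triangular (immediate from Theorem \ref{lift}(2), since partitions of the same size are mutually incomparable in the $d$-support) and that the inductive hypothesis applies to every $d'_{\mu,\nu}$ appearing in the sum, which it does because the outer index $\mu$ is constrained by $|\mu|<N$. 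No analytic subtlety arises, because the argument reduces to finite-dimensional linear algebra at each level $N$.
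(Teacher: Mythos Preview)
Your proposal is correct and follows essentially the same Cholesky-uniqueness argument as the paper. The only cosmetic difference is that the paper inducts on pairs $(\lambda,\mu)$ ordered by $(|\lambda|,|\mu|)$ and isolates the $\nu=\mu$ term directly from $\sum_\nu d_{\lambda,\nu}d_{\mu,\nu}$ (using $d_{\mu,\mu}=1$ and $d_{\mu,\nu}=0$ for $|\nu|\geq|\mu|$, $\nu\neq\mu$), whereas you induct on $|\lambda|$ alone and phrase the same step as invertibility of the unitriangular matrix $D^{<N}$; the content is identical.
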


\begin{proof}
Put a partial order on pairs of partitions by declaring $(\lambda,\mu)>(\lambda',\mu')$ if either $|\lambda|>|\lambda'|$, or $|\lambda|=|\lambda'|$ and $|\mu|>|\mu|$. We prove the corollary by inducting upon this partial order. First, $d_{\varnothing,\varnothing}=1=d_{\varnothing,\varnothing}'$. Now assume $(\lambda,\mu)\not=(\varnothing,\varnothing)$. Then 
\[\begin{array}{rll} 
d_{\lambda,\mu} & = (\lambda, \mu)_\delta-\sum\limits_{\nu\atop|\nu|<|\mu|}d_{\lambda,\nu}d_{\mu,\nu} & (\text{Theorem \ref{lift}(2)\&(4)})\\
\\
& = (\lambda, \mu)_\delta-\sum\limits_{\nu\atop|\nu|<|\mu|}d'_{\lambda,\nu}d'_{\mu,\nu} & (\text{Induction})\\
& = d'_{\lambda,\mu} & (\text{Proposition \ref{rev size}(2) and Theorem \ref{d'd'}}).
\end{array}\]
\end{proof}

\begin{example} The previous corollary along with Examples \ref{wt} and \ref{curl} imply 

\noindent $\lift_2((4,3,3,2,1))=(4,3,3,2,1)+(3,3,3,2)+(4,2,1,1,1)+(3,2,1,1)$, and 

\noindent $\lift_3((6,6,6,6,4,2))=(6,6,6,6,4,2)+(6,6,6,5,4,1)+(6,6,5,2,1)+(6,6,4,2)$.
\end{example}

\begin{proposition}\label{lift box}
$\lift_\delta(\Box)=\Box$ for all $\delta$.
\end{proposition}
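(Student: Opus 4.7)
The plan is to exploit the fact that $|\Box|=1$ makes the relevant Brauer algebra one-dimensional, so every step of the lifting construction becomes forced. First I would observe that $B_1(\delta)=\End_{\B(\delta)}(1)$ is spanned by the unique Brauer diagram of type $1\to 1$, namely $\id_1$. Hence $B_1(\delta)=\k\cdot\id_1$ for every $\delta\in\k$, and the only nonzero idempotent in $B_1(\delta)$ is $\id_1$ itself, forcing $e_{\Box}=\id_1$.

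Next I would construct the lift $\tilde e_{\Box}$ from \S\ref{lifts}. Every element of $B_1(t)$ has the form $a\cdot\id_1$ with $a\in\k[[t-\delta]]$, so the idempotency relation reduces to $a^2=a$, and the specialization condition $\tilde e_{\Box}|_{t=\delta}=e_{\Box}=\id_1$ forces $a=1$. Hence $\tilde e_{\Box}=\id_1$ in $B_1(t)$, and since $B_1(t)$ is also one-dimensional, $\id_1$ is itself primitive; no further decomposition into mutually orthogonal primitive idempotents is possible. The unique partition of size $1$ is $(1)=\Box$, so $d_{\Box,\Box}=1$ and $d_{\Box,\mu}=0$ for every other partition $\mu$, which by the definition of $\lift_\delta$ yields $\lift_\delta(\Box)=\Box$.

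There is essentially no obstacle here: the entire proposition reduces to the one-dimensionality of $B_1$, and none of the more delicate cap-diagram machinery of \S\ref{cap diagrams} is needed. It does agree with a direct check via Corollary \ref{dees}, since a short inspection shows that the weight diagram of $\Box$ admits no caps for any $\delta\in\Z$, so $\Cp_{\Box}=\varnothing$ and $d'_{\Box,\mu}$ is nonzero only for $\mu=\Box$.
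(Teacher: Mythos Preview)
Your argument is correct and is more elementary than the paper's. The paper deduces the proposition from Corollary~\ref{dees} and Proposition~\ref{rev size}(2): since $d_{\Box,\mu}=d'_{\Box,\mu}$, and $d'_{\Box,\mu}$ vanishes unless $\mu=\Box$ or $|\mu|=1-2i$ for some $i\in\Z_{>0}$ (which is impossible), one gets $\lift_\delta(\Box)=\Box$. That route passes through the identification of the $d_{\lambda,\mu}$ with Brauer-algebra decomposition numbers and the cap-diagram combinatorics. Your main argument instead reads the result off directly from the definition of $\lift_\delta$: since $B_1(\delta)$ and $B_1(t)$ are one-dimensional, both $e_{\Box}$ and its lift $\tilde e_{\Box}$ are forced to be $\id_1$, which is already primitive and corresponds to the unique partition of size $1$. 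This avoids all of \S\ref{cap diagrams} and in particular works uniformly for $\delta\notin\Z$, whereas Corollary~\ref{dees} is stated only for $\delta\in\Z$ (the non-integer case being handled separately via semisimplicity). Your closing remark about $\Cp_{\Box}=\varnothing$ is essentially the paper's argument recast, and is a nice consistency check but not needed for your proof.
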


\begin{proof}
This follows from Corollary \ref{dees} and Proposition \ref{rev size}(2).  
\end{proof}



\section{Tensor product decomposition}\label{tensor decomposition}  

In this section we give a formula for decomposing the tensor product of indecomposable objects in $\uRep(O_t)$.  This formula along with the lifting map described in the previous section can be used to decompose tensor products in $\uRep(O_\delta)$ for arbitrary $\delta$.  To obtain the formula we interpolate a known formula for decomposing tensor products of irreducible representations of orthogonal groups. 

\subsection{Orthogonal groups}  Fix $m,\ell\in\Z_{> 0}$ with $m=2\ell$ or $m=2\ell+1$.  By the universal property of $\uRep(O_m)$ \cite[Proposition 9.4]{Del07}, the assignment $R(\Box)\mapsto V$, where $V=\k^m$ is the natural representation of $O(m)$, defines a tensor functor functor $F_m: \uRep(O_m) \to \Rep(O(m))$.  For any partition $\lambda$, let $\Sr_{[\lambda]}V$ be the representation of $O(m)$ defined in \cite[\S19.5]{FH}.

\begin{proposition}\label{image for O} If $\lambda_1^T+\lambda_2^T\leq m$, then $F_m(R(\lambda))=\Sr_{[\lambda]}V$.  
\end{proposition}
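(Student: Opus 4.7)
The plan is to compare $F_m(e_\lambda)$ with the classical Young symmetrizer $z_\lambda$ acting on $V^{\otimes r}$, where $r=|\lambda|$. By functoriality, $F_m(R(\lambda))=\im(F_m(e_\lambda))$, an $O(m)$-submodule of $V^{\otimes r}$. Recall the decomposition $z_\lambda=e_\lambda+e'$ in $B_r(m)$, where $e'\in J$ is a sum of mutually orthogonal primitive idempotents with $\pi(e')=0$. Since $F_m$ restricts on $\k\Sigma_r$ to the permutation action on $V^{\otimes r}$, $F_m(z_\lambda)$ is the classical Young symmetrizer, and hence its image is the Schur module $\Sr_\lambda V$. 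Orthogonality of $e_\lambda$ and $e'$ then gives $\Sr_\lambda V=\im(F_m(e_\lambda))\oplus\im(F_m(e'))$.

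Next, every Brauer diagram in $J$ has a non-propagating strand, hence factors in $\B(m)$ through an object $r-2k$ for some $k\geq 1$. Under $F_m$, this means $\im(F_m(e'))$ lies in the sum of images of maps $V^{\otimes r-2k}\to V^{\otimes r}$, whose $O(m)$-isotypic constituents are all of the form $\Sr_{[\mu]}V$ with $|\mu|\leq r-2k<r$. On the other hand, by the classical $GL$-to-$O$ branching (Fulton--Harris \S19.5), the hypothesis $\lambda_1^T+\lambda_2^T\leq m$ guarantees that $\Sr_{[\lambda]}V$ is a nonzero irreducible $O(m)$-module, and that $\Sr_\lambda V$ decomposes as $\Sr_{[\lambda]}V$ (with multiplicity one) together with summands $\Sr_{[\mu]}V$ satisfying $|\mu|<r$. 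Matching the two decompositions of $\Sr_\lambda V$ forces $\Sr_{[\lambda]}V\subseteq\im(F_m(e_\lambda))$.

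To upgrade containment to equality, I would invoke Brauer's first fundamental theorem: the specialization $B_r(m)\to\End_{O(m)}(V^{\otimes r})$ is surjective, and its target is semisimple. Since $e_\lambda$ is primitive in $B_r(m)$, the corner $e_\lambda B_r(m) e_\lambda$ is local, and therefore its surjective image $F_m(e_\lambda)\End_{O(m)}(V^{\otimes r})F_m(e_\lambda)$ is either zero or local. Since $F_m(e_\lambda)\neq 0$ by the previous paragraph, it is local, so $F_m(e_\lambda)$ is primitive in the semisimple algebra $\End_{O(m)}(V^{\otimes r})$. Hence $\im(F_m(e_\lambda))$ is a simple $O(m)$-module, and combined with the inclusion already established this yields $\im(F_m(e_\lambda))=\Sr_{[\lambda]}V$.

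The main obstacle is this last primitivity step, ensuring that $F_m(e_\lambda)$ remains primitive in the endomorphism algebra after passing through the specialization. This rests on Brauer's surjectivity result together with the general principle that locality of a corner algebra is inherited by its images under surjective algebra maps, rather than on any delicate diagrammatic calculation.
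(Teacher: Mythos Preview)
Your argument is correct and shares the same two-step skeleton as the paper's proof: first establish the containment $\Sr_{[\lambda]}V\subseteq F_m(R(\lambda))$, then use surjectivity of $B_r(m)\to\End_{O(m)}(V^{\otimes r})$ together with semisimplicity of $\Rep(O(m))$ to conclude that $F_m(R(\lambda))$ is simple. For the second step the paper does exactly what you do, phrased categorically: it cites fullness of $F_m$ from \cite[Th\'{e}or\`{e}me 9.6]{Del07} and the general fact \cite[Proposition 2.7.4]{CW} that a full additive functor preserves indecomposability; your corner-locality argument is the algebra-level unpacking of that same fact. Where the paper differs is in the containment step. Instead of decomposing $\Sr_\lambda V=\im F_m(e_\lambda)\oplus\im F_m(e')$ and invoking $GL$-to-$O$ branching, the paper restricts directly to the traceless tensor space $V^{[r]}=\bigcap_{g\in J}\ker F_m(g)$: since $e'\in J$ one has $F_m(e_\lambda)|_{V^{[r]}}=F_m(z_\lambda)|_{V^{[r]}}$, and $\Sr_{[\lambda]}V$ is by definition the image of the latter, giving the inclusion in one line. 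Your branching route works but silently uses one extra fact, namely that $\Sr_{[\lambda]}V$ (with $|\lambda|=r$ and $\lambda_1^T+\lambda_2^T\le m$) is not isomorphic to any $\Sr_{[\mu]}V$ with $|\mu|<r$; this is part of the classification of $O(m)$-irreducibles in \cite[Theorem 19.19]{FH}, so it is fine, but it is an input the paper's argument avoids.
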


\begin{proof}  Set $r=|\lambda|$.  As in \cite[\S19.5]{FH} we write $V^{[r]}\subset V^{\otimes r}$ for the intersection of the kernels of $F_m(g)$ as $g$ runs over all Brauer diagrams in $B_r(m)$ with at least one non-propagating strand.  As explained in \emph{loc.~cit.},  $\Sr_{[\lambda]}V=\im(F_m(z_\lambda):V^{[r]}\to V^{[r]})$.  By the definition of $e_\lambda$, the actions of $F_m(z_\lambda)$ and $F_m(e_\lambda)$ on $V^{[r]}$ coincide, whence $\Sr_{[\lambda]}V$ is a submodule of $F_m(R(\lambda))$.  Now $F_m$ is full by \cite[Th\'{e}or\`{e}me 9.6]{Del07} so that  $F_m(R(\lambda))$ is indecomposable \cite[Proposition 2.7.4]{CW} and is thus  zero or irreducible in the semisimple category $\Rep(O(m))$.  The result follows since $\Sr_{[\lambda]}V$ is irreducible  when $\lambda_1^T+\lambda_2^T\leq m$ \cite[Theorem 19.19]{FH}.  
\end{proof}

\begin{remark}\label{remark on O} It turns out that  $F_m(R(\lambda))=\Sr_{[\lambda]}V$ for all $m$ and $\lambda$.  Indeed, when $\lambda_1^T+\lambda_2^T> m$ both $\Sr_{[\lambda]}V$ and $F_m(R(\lambda))$ are zero.  For the former, see \cite[Exercise 19.20]{FH}.  The latter will be discussed in \S\ref{OSp intro}.
\end{remark}

The following is a consequence of Proposition \ref{image for O} and  \cite[Theorem 19.22]{FH}:

\begin{proposition}\label{image in SO} If $l(\lambda)<\ell$, then $F_m(R(\lambda))$ restricts to the irreducible representation of $SO(m)$ with highest weight $\lambda$.  
\end{proposition}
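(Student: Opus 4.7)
The plan is to reduce the statement to a direct application of Proposition \ref{image for O} followed by the classical branching theorem cited as \cite[Theorem 19.22]{FH}. The only real content is checking the hypothesis of Proposition \ref{image for O}.

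First, I would show that the hypothesis $l(\lambda)<\ell$ forces $\lambda_1^T+\lambda_2^T\leq m$. By definition of transpose, $\lambda_1^T=l(\lambda)$ and $\lambda_2^T\leq \lambda_1^T$, so
\[
\lambda_1^T+\lambda_2^T\;\leq\;2l(\lambda)\;<\;2\ell\;\leq\; m,
\]
where the final inequality holds since $m=2\ell$ or $m=2\ell+1$.

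Second, with the hypothesis of Proposition \ref{image for O} verified, I would invoke that proposition to identify $F_m(R(\lambda))=\Sr_{[\lambda]}V$ as a representation of $O(m)$.

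Third, I would restrict $\Sr_{[\lambda]}V$ from $O(m)$ to $SO(m)$ and appeal to \cite[Theorem 19.22]{FH}, which states precisely that in the regime $l(\lambda)<\ell$ the restriction $\Sr_{[\lambda]}V\big|_{SO(m)}$ remains irreducible and carries highest weight $\lambda$ (the case $l(\lambda)=\ell$ for $m=2\ell$, which is excluded by our hypothesis, would split into two $SO(m)$-summands). Combining these three observations yields the claim, so there is no real obstacle; the proof is essentially a compatibility check between the hypothesis stated here and the hypothesis of the preceding proposition.
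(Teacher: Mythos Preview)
Your proposal is correct and matches the paper's approach exactly: the paper simply states that the proposition is a consequence of Proposition~\ref{image for O} and \cite[Theorem 19.22]{FH}, and you have supplied the routine verification that $l(\lambda)<\ell$ implies $\lambda_1^T+\lambda_2^T\leq m$ needed to invoke Proposition~\ref{image for O}.
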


\subsection{Generic decomposition formulae}  We are now in position to prove a formula for decomposing tensor products of indecomposable objects in $\uRep(O_t)$.  For notational convenience we work in the additive Grothendieck rings.  

\begin{theorem}\label{generic tensor} For any partitions $\lambda$ and $\mu$, the following holds in $K_t$:
\begin{equation*}
\lambda\mu=\sum_{\alpha,\beta,\gamma,\nu}LR_{\alpha,\beta}^\lambda LR_{\alpha,\gamma}^\mu LR_{\beta,\gamma}^\nu \nu,
\end{equation*}
where $LR_{\alpha,\beta}^\gamma$'s are the Littlewood Richardson coefficients.  
\end{theorem}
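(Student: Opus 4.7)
The plan is to deduce the formula in $K_t$ by specializing the parameter to large positive integers $m$, passing through the tensor functor $F_m\colon\uRep(O_m)\to\Rep(O(m))$, and invoking Koike's stable-range decomposition of tensor products for $O(m)$.

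Since $\uRep(O_t)$ is semisimple (Theorem \ref{semisimple}), we can write $\lambda\mu=\sum_\nu c_\nu\,\nu$ in $K_t$ with nonnegative integers $c_\nu$, almost all zero, and the claim reduces to identifying each $c_\nu$ with $\sum_{\alpha,\beta,\gamma}LR_{\alpha,\beta}^\lambda LR_{\alpha,\gamma}^\mu LR_{\beta,\gamma}^\nu$. Let $S$ be the finite set consisting of $\lambda$, $\mu$, every $\nu$ with $c_\nu\neq 0$, and every $\nu$ appearing on the right-hand side of the claimed formula. By Theorem \ref{lift}(3), $\lift_\delta(\sigma)=\sigma$ for all $\sigma\in S$ and all but finitely many $\delta\in\k$. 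Choose a positive integer $m$ large enough that (i) $\lift_m(\sigma)=\sigma$ for every $\sigma\in S$, (ii) $\sigma_1^T+\sigma_2^T\leq m$ for every $\sigma\in S$, so that Proposition \ref{image for O} gives $F_m(R(\sigma))=\Sr_{[\sigma]}V$, and (iii) $m$ exceeds the stable-range bound in Koike's formula for the pair $(\lambda,\mu)$.

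Condition (i) combined with the ring isomorphism of Theorem \ref{lift}(1) translates the equation $\lambda\mu=\sum_\nu c_\nu\nu$ from $K_t$ into the decomposition $R(\lambda)\otimes R(\mu)\cong\bigoplus_\nu c_\nu R(\nu)$ in $\uRep(O_m)$; applying $F_m$ and using condition (ii) yields $\Sr_{[\lambda]}V\otimes\Sr_{[\mu]}V\cong\bigoplus_\nu c_\nu\,\Sr_{[\nu]}V$ in $\Rep(O(m))$. On the other hand, Koike's theorem \cite{Koike} in the stable range gives
\[\Sr_{[\lambda]}V\otimes\Sr_{[\mu]}V\cong\bigoplus_\nu\Bigl(\sum_{\alpha,\beta,\gamma}LR_{\alpha,\beta}^\lambda LR_{\alpha,\gamma}^\mu LR_{\beta,\gamma}^\nu\Bigr)\Sr_{[\nu]}V,\]
and since the $\Sr_{[\nu]}V$ with $\nu_1^T+\nu_2^T\leq m$ are pairwise non-isomorphic irreducible $O(m)$-representations by \cite[Theorem 19.19]{FH}, comparing coefficients yields the required identity for each $c_\nu$. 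The only delicate point is confirming that a single $m$ can be chosen to satisfy (i)--(iii) simultaneously, but each is a ``large-$m$'' condition depending only on the finite set $S$ (which depends only on $\lambda$ and $\mu$), so such an $m$ certainly exists; the real content of the proof is thus concentrated in Koike's classical formula together with the lifting isomorphism of Theorem \ref{lift}.
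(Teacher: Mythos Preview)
Your proposal is correct and follows essentially the same strategy as the paper: choose $m$ large enough that the lifting map fixes all relevant partitions, push the identity down to $K_m$ and through $F_m$, and then invoke Koike's stable-range formula. The only cosmetic difference is that the paper restricts further to $SO(m)$ via Proposition~\ref{image in SO} before citing \cite[Theorem~3.1]{Koike}, whereas you remain in $\Rep(O(m))$ via Proposition~\ref{image for O}; since the Newell--Littlewood coefficients govern both the $O(m)$ and $SO(m)$ decompositions in the stable range, either route works.
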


\begin{proof}
 Let $b_{\lambda,\mu}^\nu\in\Z$ be such that 
\begin{equation}\label{product}\lambda\mu=\sum_{\nu}b_{\lambda,\mu}^\nu\nu\end{equation} 
in $K_t$.  By Theorem \ref{lift}(3), we can find $m\in\Z$ such that (i) $\lift_m$ fixes $\lambda,\mu$, and $\nu$ whenever $b_{\lambda,\mu}^\nu\not=0$; and (ii) $m/2$ is greater than each of $l(\lambda), l(\mu)$, and $l(\nu)$ whenever $b_{\lambda,\mu}^\nu\not=0$.
Assumption (i) along with Theorem \ref{lift}(1) imply that (\ref{product}) holds in $K_m$.  Hence, in $\Rep(O(m))$ we have  
$F_m(R(\lambda)) \otimes F_m(R(\mu))=\bigoplus_\nu F_m(R(\nu))^{\oplus b_{\lambda,\mu}^\nu}$.  Next, restrict to $SO(m)$ using assumption (ii) and Proposition \ref{image in SO}.  The result now follows from \cite[Theorem 3.1]{Koike}.
\end{proof}

The following is a  special case of Theorem \ref{generic tensor}:

\begin{equation}\label{generic tensor box} 
\lambda\,\Box=\sum_{\mu\in\Add(\lambda)}\mu+\sum_{\mu\in\Rem(\lambda)}\mu\qquad\text{in } K_t,
\end{equation}
where $\Add(\lambda)$ (resp.~$\Rem(\lambda)$) is the set of all partitions whose Young diagram is obtained from $\lambda$ by adding (resp.~removing) a single box.


\begin{corollary}\label{boxes}
Any partition $\lambda$ is a summand of $\Box^{|\lambda|}$ in $K_\delta$.
\end{corollary}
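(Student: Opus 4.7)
The plan is to transport the problem from $K_\delta$ to $K_t$ using the lifting isomorphism, where the tensor formula is transparent, and then pull the information back via a triangularity argument.

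First, I would establish the corollary in $K_t$: the coefficient of $\lambda$ in $\Box^{|\lambda|}$ is strictly positive. I would prove this by induction on $|\lambda|$. The base case $|\lambda|=0$ is trivial. For the inductive step, pick a partition $\lambda'$ obtained from $\lambda$ by removing a removable box (so $\lambda \in \Add(\lambda')$); by induction $\lambda'$ is a summand of $\Box^{|\lambda|-1}$ in $K_t$, and then equation (\ref{generic tensor box}) shows $\lambda$ appears in $\lambda'\cdot\Box$. Since all structure constants in $K_t$ are nonnegative integers (indecomposables decompose with nonnegative multiplicity), no cancellation occurs, so the coefficient of $\lambda$ in $\Box^{|\lambda|}$ is positive.

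Next, write $\Box^{|\lambda|}=\sum_\nu c_\nu \nu$ in $K_\delta$ with $c_\nu\in\Z_{\geq 0}$. By the discussion in \S\ref{partitions}--\S\ref{indecomposable classification}, indecomposable summands of the object $|\lambda|=\Box^{\otimes|\lambda|}$ of $\B(\delta)$ are images of primitive idempotents in $B_{|\lambda|}(\delta)$, which by Theorem \ref{idempotent classification} correspond to partitions $\nu$ with $|\nu|\leq|\lambda|$. So $c_\nu\neq 0$ forces $|\nu|\leq|\lambda|$.

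Now apply the ring isomorphism $\lift_\delta$. By Theorem \ref{lift}(1) and Proposition \ref{lift box}, $\lift_\delta(\Box^{|\lambda|})=\Box^{|\lambda|}$, so in $K_t$ we have
\[
\Box^{|\lambda|}=\sum_\nu c_\nu \lift_\delta(\nu)=\sum_{\nu,\mu} c_\nu d_{\nu,\mu}\mu.
\]
Extracting the coefficient of $\lambda$, Theorem \ref{lift}(2) tells us $d_{\nu,\lambda}\neq 0$ only if $\nu=\lambda$ or $|\nu|>|\lambda|$. By the previous paragraph the second possibility is excluded from our sum, so the coefficient of $\lambda$ on the right collapses to $c_\lambda \cdot d_{\lambda,\lambda}=c_\lambda$. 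Since the first step showed this coefficient is positive, we conclude $c_\lambda>0$, proving the corollary.

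The only mildly delicate point is the triangularity step: one must be sure that no partition $\nu$ with $|\nu|>|\lambda|$ can sneak into the $K_\delta$-decomposition of $\Box^{|\lambda|}$ and then ``lift down'' to produce a cancelling $\lambda$-contribution. This is ruled out purely by the Brauer-algebra description of indecomposables, so the argument is essentially automatic once the lifting map and (\ref{generic tensor box}) are in hand.
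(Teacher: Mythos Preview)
Your proof is correct and follows essentially the same approach as the paper: lift to $K_t$ via Proposition~\ref{lift box}, observe that $\lambda$ appears there, and use the triangularity in Theorem~\ref{lift}(2) to pull the conclusion back to $K_\delta$. The only minor difference is that you invoke Theorem~\ref{idempotent classification} to bound $|\nu|\leq|\lambda|$ for summands in $K_\delta$, whereas the paper obtains this bound from the $K_t$ side by noting that $\lambda$ has \emph{maximal} size among summands of $\lift_\delta(\Box^{|\lambda|})$ (so any larger $\nu$ would survive under $\lift_\delta$ and violate maximality); both routes are equally valid.
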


\begin{proof} 
By Proposition \ref{lift box} and (\ref{generic tensor box}), $\lambda$ is a summand of $\lift_\delta(\Box^{|\lambda|})$ with maximal size.  The result now follows from Theorem \ref{lift}(2).
\end{proof}


\subsection{Interpretation via weight diagrams}  

We will repeatedly make use of (\ref{generic tensor box}).  In doing so, it will be useful to understand the effect of adding/removing boxes in terms of weight diagrams.  The following proposition is easy to verify.

\begin{proposition}\label{shifts}
Assume $\delta\in\Z$.  Adding (resp.~removing) a single box from a partition corresponds to replacing a single mark at vertex $\frac{1}{2}$ or a pair of adjacent marks with the marks above (resp.~below) them in the following table:
\begin{table}[htbp]
$\begin{array}{|c|c|c|c|c|c|c|c|c|c|c|c|c|}\hline
&&&&&&&&&&&&\\[-10pt]
\text{\Large{$\up\atop\frac{1}{2}$}}&\bigo\up&\down\bigo&\cross\up&\down\cross&\bigo\up&\diam\bigo&\bigo\cross
&\bigo\cross&\cross\bigo&\diam\up&\down\up&\down\up\\[5pt]\hline
&&&&&&&&&&&&\\[-10pt]
\text{\Large{$\down\atop\frac{1}{2}$}}&\up\bigo&\bigo\down&\up\cross&\cross\down&\diam\bigo&\bigo\down&\diam\down
&\up\down&\up\down&\bigo\cross&\bigo\cross&\cross\bigo\\[5pt]\hline
\end{array}$
\caption{}
\label{table:shifts}
\end{table}
\end{proposition}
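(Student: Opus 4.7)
The plan is to reduce the statement to a short case analysis on how the defining set $X_\lambda$ changes under the addition of a single box.

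First, I would observe that adding a box to $\lambda$ in column $k$ (equivalently, to row $k$ of $\lambda^T$) increases $\lambda_k^T$ by $1$ and leaves every other $\lambda_i^T$ unchanged. In terms of the strictly decreasing sequence $x_i := \lambda_i^T - i + 1 - \delta/2$, this removes $x_k$ from $X_\lambda$ and inserts $x_k + 1$ in its place, subject to the constraint $x_k + 1 \notin X_\lambda$ (which is equivalent to the Young diagram condition $\lambda_k^T + 1 \leq \lambda_{k-1}^T$); every such change arises from a unique box-addition. Since the mark at vertex $i > 0$ depends only on whether $\pm i \in X_\lambda$, and the mark at vertex $0$ only on whether $0 \in X_\lambda$, the only vertices whose marks can change are those labeled $|x_k|$ and $|x_k+1|$. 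These are adjacent except when $x_k = -\tfrac{1}{2}$, in which case they coincide at vertex $\tfrac{1}{2}$.

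Writing $y = x_k$, I would split into five cases based on the value of $y$: (i) $y \geq 1$, so both affected vertices are positive; (ii) $y = 0$, affecting vertices $0$ and $1$; (iii) $y = -\tfrac{1}{2}$ (which occurs only when $\delta$ is odd), affecting the single vertex $\tfrac{1}{2}$; (iv) $y = -1$, affecting vertices $1$ and $0$; (v) $y \leq -2$, so both affected vertices $|y+1|=-y-1$ and $|y|=-y$ are positive. Cases (i) and (v) each split further into four subcases according to whether $-y$ and $-(y+1)$ lie in $X_\lambda$; cases (ii) and (iv) split into two subcases according to whether $\pm 1 \in X_\lambda$; case (iii) is a single subcase. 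This gives $1 + 4 + 2 + 2 + 4 = 13$ subcases, matching the $13$ columns of Table~\ref{table:shifts}.

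Verifying each subcase is then a direct unwinding of the mark definitions. For instance, in case (i) with $-y, -(y+1) \notin X_\lambda$, vertex $y$ carries $\up$ and vertex $y+1$ carries $\bigo$ before the change, while afterwards they become $\bigo$ and $\up$, reproducing column 2; and in case (iii), vertex $\tfrac{1}{2}$ carries $\down$ before and $\up$ after, reproducing column 1. The removal half of the statement then follows immediately, since removing a box is inverse to adding one, so each transition simply reverses.

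There is no real conceptual obstacle; the proof is entirely mechanical bookkeeping. The only subtlety is the exceptional behaviour at vertex $0$ (which carries only $\bigo$ or $\diam$) and at vertex $\tfrac{1}{2}$ (which exists only when $\delta$ is odd); these account for the $\diam$-transitions in columns 6--8 and 11, and for the single-mark column 1. The main risk is mis-matching a subcase to the wrong column, which I would guard against by sorting the thirteen subcases by their before/after pattern and comparing them to the table entry-by-entry.
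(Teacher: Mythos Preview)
Your proposal is correct and is exactly the direct verification the paper has in mind when it says the proposition is ``easy to verify'' (the paper gives no further proof). The only small imprecision is in your case boundaries: when $\delta$ is odd, case (i) should include $y=\tfrac{1}{2}$ and case (v) should include $y=-\tfrac{3}{2}$, so (i) and (v) are better phrased as ``$y>0$'' and ``$y+1<0$'' respectively; your subcase count of $13$ already reflects this intended reading.
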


If  $\mu\in\Add(\lambda)\cup\Rem(\lambda)$, we will refer to the corresponding column of the table above as a ``move".  For instance, we say $\mu$ is obtained from $\lambda$ by a $\up\down\leadsto\bigo\cross$ move if the weight diagrams of $\mu$ and $\lambda$ are identical except for an adjacent pair of vertices where $\lambda$ has marks $\up\down$ and $\mu$ has $\bigo\cross$.



\section{Thick ideals}\label{ideals}


A thick ideal (see \S\ref{intro:ideals}) is called \emph{proper} if it does not contain all objects in $\Cat$.  In this section we classify the thick ideals in $\uRep(O_\delta)$.  When $\delta\not\in\Z$, $\uRep(O_\delta)$ is semisimple (Theorem \ref{semisimple}), and there are no nonzero proper thick ideals in a semisimple category.  For the remainder of this section, we assume $\delta\in\Z$.  For notational convenience, we will identify each thick ideal $\I$ in $\uRep(O_\delta)$ with subsets of $K_\delta$ so that $\lambda\in\I\subseteq K_\delta$ means $R(\lambda)\in\I\subseteq\uRep(O_\delta)$.



\subsection{The thick ideals $\I_k$}\label{ideal subsection}
For a partition $\lambda$, we define the \emph{defect} of $\lambda$, $\df(\lambda)$, to be the number of caps (dotted or un-dotted) in the cap diagram of $\lambda$.  We define the \emph{rank} of $\lambda$, $\rk(\lambda)$, to be the number of $\bigo$'s or the number of $\cross$'s in the weight diagram of $\lambda$, whichever is smaller.  Now, set $k(\lambda)=\df(\lambda)+\rk(\lambda)$.  Finally, for $k\in\Z_{\geq0}$ we define the following subset of $K_\delta$:
\begin{equation*}
\I_k=\Z_{\geq0}\{\lambda~|~k(\lambda)\geq k\}.
\end{equation*}
The main goal of this subsection is to prove that each $\I_k$ is a thick ideal (see Corollary \ref{Ik is thick}).  Towards that end, let us fix the following notation:

\begin{equation}\label{a and b}
\lambda\,\Box=\sum_\mu a_{\lambda,\mu} \mu\in K_\delta,\quad \lift_\delta(\lambda\,\Box)=\sum_\nu b_{\lambda,\nu}\nu.
\end{equation}
In particular, we have 
\begin{equation}\label{bad}
b_{\lambda,\nu}=\sum_\mu a_{\lambda,\mu}d_{\mu,\nu}.  
\end{equation}

\begin{proposition}\label{b0 implies a0}
$b_{\lambda,\nu}\geq a_{\lambda,\nu}$ for all $\lambda,\nu$.  Hence, if $b_{\lambda,\nu}=0$ then $a_{\lambda,\nu}=0$. 
\end{proposition}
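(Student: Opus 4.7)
The proof should be a short unpacking of the identity $b_{\lambda,\nu}=\sum_\mu a_{\lambda,\mu}d_{\mu,\nu}$ in (\ref{bad}), together with the observations that all the coefficients on the right are nonnegative and that the diagonal term is $d_{\nu,\nu}=1$.

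First I would note that $a_{\lambda,\mu}\in\Z_{\geq 0}$ by definition, since it is the multiplicity of the indecomposable $R(\mu)$ in the decomposition of $R(\lambda)\otimes R(\Box)$ in $\uRep(O_\delta)$. Similarly, $d_{\mu,\nu}\in\Z_{\geq 0}$ because, by its definition in \S\ref{lifts}, it counts the number of idempotents corresponding to $\nu$ in a decomposition of $\tilde e_\mu$ into a sum of pairwise orthogonal primitive idempotents.

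Next, I would isolate the $\mu=\nu$ term in (\ref{bad}). By Theorem \ref{lift}(2), $d_{\nu,\nu}=1$, so
\[
b_{\lambda,\nu}\;=\;\sum_\mu a_{\lambda,\mu}d_{\mu,\nu}\;=\;a_{\lambda,\nu}\cdot d_{\nu,\nu}+\sum_{\mu\neq \nu}a_{\lambda,\mu}d_{\mu,\nu}\;=\;a_{\lambda,\nu}+\sum_{\mu\neq \nu}a_{\lambda,\mu}d_{\mu,\nu}.
\]
Since every summand in the remaining sum is nonnegative, we conclude $b_{\lambda,\nu}\geq a_{\lambda,\nu}$, and the second assertion ($b_{\lambda,\nu}=0\Rightarrow a_{\lambda,\nu}=0$) is immediate.

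There is no real obstacle here; the entire content lies in recognizing that (\ref{bad}) is a nonnegative-integer combination with a $1$ on the diagonal. The proof is essentially a one-line observation once Theorem \ref{lift}(2) and the nonnegativity of the structure constants $a_{\lambda,\mu}$ and $d_{\mu,\nu}$ are in hand.
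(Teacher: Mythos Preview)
Your proof is correct and matches the paper's own argument exactly: the paper simply cites (\ref{bad}) together with $d_{\nu,\nu}=1$ and the nonnegativity of $a_{\lambda,\mu}$ and $d_{\mu,\nu}$.
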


\begin{proof}
This follows from (\ref{bad}) since $d_{\nu,\nu}=1$ and $a_{\lambda,\mu},d_{\mu,\nu}\geq0$.
\end{proof}

\begin{lemma} \label{b012}
For all partitions $\lambda$ and $\nu$,   $b_{\lambda,\nu}\in\{0,1,2\}$.  Moreover, if $b_{\lambda,\nu}\not=0$, then $b_{\lambda,\nu}=2$ exactly when there is some $i$ such that the vertices in the weight diagram of $\lambda$ (resp.~$\nu$) at positions $i$ and $i+1$ are $\down\up$ or $\diam\up$ (resp.~$\bigo\cross$ or $\cross\bigo$).  
\end{lemma}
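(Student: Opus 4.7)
The plan is to reformulate $b_{\lambda,\nu}$ as a count, and then do a local case analysis using Table \ref{table:shifts}.

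First, combining Theorem \ref{lift}(1), Proposition \ref{lift box}, equation (\ref{generic tensor box}), and Corollary \ref{dees}, one computes
\[
\lift_\delta(\lambda\,\Box)\;=\;\lift_\delta(\lambda)\cdot\Box\;=\;\sum_\kappa d_{\lambda,\kappa}\sum_{\mu\in\Add(\kappa)\cup\Rem(\kappa)}\mu.
\]
Extracting the coefficient of $\nu$ and using that $d_{\lambda,\kappa}\in\{0,1\}$ with $d_{\lambda,\kappa}=1$ iff $\kappa=\rev_A(\lambda)$ for some $A\subseteq\Cp_\lambda$ (Corollary \ref{dees}) gives
\[
b_{\lambda,\nu}\;=\;\#\{\kappa\in\Add(\nu)\cup\Rem(\nu) : \kappa=\rev_A(\lambda)\text{ for some }A\subseteq\Cp_\lambda\}.
\]
So $b_{\lambda,\nu}$ counts Young-lattice neighbors $\kappa$ of $\nu$ that are also cap reversals of $\lambda$.

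The key observation is that cap reversals only flip $\up\leftrightarrow\down$ and fix $\diam$, never introducing or removing a $\bigo$ or $\cross$. Consequently, any vertex at which the weight diagrams of $\lambda$ and $\nu$ have marks of different ``type'' (one in $\{\up,\down,\diam\}$, the other in $\{\bigo,\cross\}$) must lie among the one or two positions affected by the Add/Rem-box move taking $\nu$ to $\kappa$ from Proposition \ref{shifts}. This pins down the move location essentially uniquely, so in typical configurations at most one candidate $\kappa$ exists.

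The remaining question is when two distinct candidates can arise at the same move location. Scanning Table \ref{table:shifts}, the ``before'' patterns (bottoms) appearing in more than one column are $\bigo\down$, $\up\down$, and $\bigo\cross$, while the corresponding ``after'' patterns (tops) admitting two preimages are $\bigo\up$, $\bigo\cross$, and $\down\up$. The main obstacle is to check each of these cases. I expect the cap-compatibility requirement (the difference between the weight diagrams of $\lambda$ and $\kappa$ must consist of $\up\leftrightarrow\down$ flips at cap endpoints of $\lambda$) to rule out all but the case where $\nu$ has $\bigo\cross$ or $\cross\bigo$ at some adjacent pair $(i,i+1)$ and $\lambda$ has $\down\up$ or $\diam\up$ at $(i,i+1)$ (so that $(i,i+1)$ is an un-dotted cap of $\lambda$). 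In that case the two valid candidates are $\kappa_1=\lambda$ and $\kappa_2=\rev_{\{(i,i+1)\}}(\lambda)$, obtained from $\nu$ by a box-addition and a box-removal respectively. The other multiple-column configurations fail cap-compatibility because their two outcomes differ by a swap of mark types (for example, the two ``add a box'' outcomes from bottom $\up\down$ differ by $\bigo\cross\leftrightarrow\cross\bigo$), which $\rev$ cannot induce on any single $\lambda$.
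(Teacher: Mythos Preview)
Your approach is essentially the same as the paper's: both rewrite $b_{\lambda,\nu}$ as the number of $\kappa\in\Add(\nu)\cup\Rem(\nu)$ with $d_{\lambda,\kappa}=1$, then exploit the fact that reversals only touch $\up/\down/\diam$ marks to reduce to a local case analysis via Table~\ref{table:shifts}. The paper organizes the case check slightly more cleanly by comparing two candidate partitions $\mu,\mu'$ directly (they differ by two moves from the table yet only at $\up/\down$ vertices), which in one stroke handles both the ``move location is pinned down'' step and the enumeration of multi-column patterns that you treat separately; but the substance of the argument is the same.
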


\begin{proof}  
By Theorem \ref{lift}(1) and Proposition \ref{lift box}, we have $\lift_\delta(\lambda\Box)=\sum_\mu d_{\lambda,\mu}\mu\Box$.  Since $d_{\lambda,\mu}\in\{0,1\}$ (Corollary \ref{dees}), it follows from (\ref{generic tensor box}) that $b_{\lambda,\nu}$ is the number of partitions $\mu\in\Add(\nu)\cup\Rem(\nu)$ such that $d_{\lambda,\mu}=1$.  Suppose $\mu$ and $\mu'$ are two such partitions.  First, since $d_{\lambda,\mu}=1=d_{\lambda,\mu'}$, it follows from Corollary \ref{dees} that the vertices where the weight diagrams of $\mu$ and $\mu'$ differ are all labelled by $\up$ or $\down$.  Second, since $\mu,\mu'\in\Add(\nu)\cup\Rem(\nu)$, the weight diagrams of $\mu$ and $\mu'$ can be obtained from one another using exactly two of the moves from Table \ref{table:shifts}.  These two facts are only possible if $\mu=\mu'$ or the weight diagrams of $\mu$ and $\mu'$ are identical to $\nu$ except for an adjacent pair of vertices where $\nu$ has  $\bigo\cross$ or $\cross\bigo$ and either $\mu$ or $\mu'$ has  $\down\up$ (resp.~$\diam\up$) and the other has $\up\down$ (resp.~$\diam\down$).  Thus $b_{\lambda,\nu}\leq 2$.  Finally, the case $b_{\lambda,\nu}=2$ (i.e.~$\mu\not=\mu'$) occurs exactly when there is an un-dotted cap in the cap diagram of $\lambda$ connecting the vertices where the weight diagrams of $\mu, \mu'$, and $\nu$ differ.  The result follows.
\end{proof}

\begin{lemma}\label{b to a}
Suppose $\lambda$ and $\nu$ are partitions with $b_{\lambda,\nu}\not=0$.  Then there exists a partition $\eta$ with $a_{\lambda,\eta}=b_{\lambda,\nu}$, $d_{\eta,\nu}=1$, and $k(\eta)\geq k(\lambda)$.  
\end{lemma}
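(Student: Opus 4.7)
The plan is to construct $\eta$ explicitly by performing an add/remove-box move on $\lambda$ analogous to the move taking $\mu$ to $\nu$ in the lift, then verify the three required properties by case analysis on Table~\ref{table:shifts}. Unpacking the hypothesis via the proof of Lemma~\ref{b012}, the condition $b_{\lambda,\nu}\neq 0$ produces a partition $\mu=\rev_A(\lambda)$ with $A\subseteq\Cp_\lambda$ (so $d_{\lambda,\mu}=1$) such that $\nu$ is obtained from $\mu$ by a single move at some position $V$ of the weight diagram (the vertex at $\tfrac12$ or an adjacent pair of vertices). If $b_{\lambda,\nu}=2$, Lemma~\ref{b012} pinpoints the configuration: $\lambda$ has $\down\up$ or $\diam\up$ at an adjacent pair $V$ joined by an un-dotted cap, and $\nu$ has $\bigo\cross$ or $\cross\bigo$ at $V$.

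The construction of $\eta$ splits according to whether $V$ meets an endpoint of some cap in $A$. In case (a), where $V$ is disjoint from $A$, $\lambda$ and $\mu$ agree at $V$ and the same move yields $\eta\in\Add(\lambda)\cup\Rem(\lambda)$; the caps of $A$ survive verbatim in $\Cp_\eta$, so $\rev_A(\eta)=\nu$ and $d_{\eta,\nu}=1$, while $a_{\lambda,\eta}=1=b_{\lambda,\nu}$ follows from (\ref{generic tensor box}). In case (b), where $V$ meets an endpoint of a cap in $A$, $\lambda$ and $\mu$ disagree at $V$, and the substitute move on $\lambda$ is determined by pairing rows of Table~\ref{table:shifts} whose top (for an add) or bottom (for a remove) patterns agree up to the $\up\leftrightarrow\down$ swap induced by reversing that cap. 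The subcase $b_{\lambda,\nu}=2$ is precisely when the two possible routes (reverse the cap before applying the move, or use an alternate move directly) converge on a single $\eta$; the natural choice is then $\eta=\nu$, with the reversed cap in $A$ being absorbed into the $\bigo\cross$ or $\cross\bigo$ pair of $\eta$'s weight diagram.

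The identity $a_{\lambda,\eta}=b_{\lambda,\nu}$ would then follow by inverting the triangular system (\ref{bad}) using Theorem~\ref{lift}(2) and Proposition~\ref{rev size}(2): with $\eta$ chosen as above, the only partition $\eta'$ satisfying both $a_{\lambda,\eta'}\neq 0$ and $d_{\eta',\nu}\neq 0$ is $\eta$ itself, necessarily with the required multiplicity. For $k(\eta)\geq k(\lambda)$ one tracks the effect of each column of Table~\ref{table:shifts} on the pair $(\df,\rk)$: the reorder columns 2--5 and the $\diam$-conversions in columns 6, 7 leave both invariants fixed; the cap-destroying columns 8--10 lower $\df$ by $1$ and raise $\rk$ by $1$; the cap-creating columns 11--13 do the reverse. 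In case (a) the caps of $A$ survive intact, forcing $k(\eta)=k(\lambda)$; in case (b) the cap of $A$ that touches $V$ is absorbed into the altered marks at $V$, and what would have been a decrease of $\df$ is balanced (or more than balanced) by an increase of $\rk$, so $k$ does not drop.

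The main obstacle is the systematic bookkeeping for case (b), where a move at $V$ can trigger secondary rearrangements of caps that share a vertex with $V$, and one must verify that the net effect on $\df+\rk$ is never negative. This amounts to walking through each of the thirteen columns of Table~\ref{table:shifts} alongside each type of cap endpoint that can sit at $V$; the check is finite, driven entirely by the combinatorics of cap diagrams, but it is the delicate heart of the argument.
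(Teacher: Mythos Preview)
Your overall strategy matches the paper's: start from $\mu=\rev_A(\lambda)$ with $\nu$ obtained from $\mu$ by a single Table~\ref{table:shifts} move at the ``crucial vertices'' $V$, then construct $\eta$ by performing some move on $\lambda$ at $V$. However, the construction in your case (a) is wrong. You assert that when $V$ is disjoint from the endpoints of caps in $A$, one may apply the \emph{same} move to $\lambda$ and the caps of $A$ survive verbatim in $\Cp_\eta$, giving $\rev_A(\eta)=\nu$. This fails. Take $\lambda$ with marks $\down\,\cross\,\bigo\,\up$ at four consecutive positions $p,p+1,p+2,p+3$, so that $(p,p+3)\in\Cp_\lambda$; let $A=\{(p,p+3)\}$, so $\mu$ has $\up\,\cross\,\bigo\,\down$ there, and apply the remove move $\cross\bigo\leadsto\up\down$ at $V=\{p+1,p+2\}$ to get $\nu$ with $\up\,\up\,\down\,\down$. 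Your $\eta$ then has $\down\,\up\,\down\,\up$, whose cap set at these positions is $\{(p,p+1),(p+2,p+3)\}$; the cap $(p,p+3)$ does not survive, and one checks directly that $\nu\neq\rev_B(\eta)$ for any $B\subseteq\Cp_\eta$, so $d_{\eta,\nu}=0$.

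The paper avoids this by \emph{not} copying the move from $\mu$. It cases on the marks of $\nu$ at $V$ rather than on whether $V$ meets $A$: when $\nu$ has $\up\down$ or $\down\up$ (resp.~$\diam\up$ or $\diam\down$) at $V$, it always places $\down\up$ (resp.~$\diam\up$) at $V$ in $\eta$, regardless of which of the two $\nu$ actually has. In the example above this gives $\eta$ with $\down\,\down\,\up\,\up$ and caps $(p+1,p+2)$, $(p,p+3)$, and now $\nu=\rev_{\{(p+1,p+2),(p,p+3)\}}(\eta)$. Forcing $\down\up$ at $V$ creates a new innermost cap there, which preserves the nesting of the surrounding caps in $A$; your ``same move'' can instead produce $\up\down$, which breaks that nesting. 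A smaller point: your case-(a) claim that $a_{\lambda,\eta}=1$ follows from (\ref{generic tensor box}) is also off, since that identity lives in $K_t$, not $K_\delta$; the paper instead shows $\eta$ is the unique $\eta'$ with $b_{\lambda,\eta'}\neq 0$ and $d_{\eta',\eta}\neq 0$, deduces $b_{\lambda,\eta}=a_{\lambda,\eta}$ from (\ref{bad}) and Proposition~\ref{b0 implies a0}, and then uses Lemma~\ref{b012} together with $d_{\eta,\nu}=1$ to get $b_{\lambda,\nu}=b_{\lambda,\eta}$.
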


\begin{proof} First, since $b_{\lambda,\nu}\not=0$, then as explained in the proof of Lemma \ref{b012},  there exists $\mu\in\Add(\nu)\cup\Rem(\nu)$ with $d_{\lambda,\mu}=1$.  Hence, by Corollary \ref{dees} we have $\mu=\rev_A(\lambda)$ for some $A\subseteq\Cp_\lambda$.   Moreover, 
the weight diagrams of  $\mu$ and $\nu$ differ in at most two vertices, as prescribed by Proposition \ref{shifts}.  We will refer to these vertices as the ``crucial vertices".  Next, we explain how to construct $\eta$ using cases that depend on $\lambda$ and $\nu$ at the crucial vertices.  

We start with the cases where the crucial vertices of $\nu$ are not $\bigo\cross$ or $\cross\bigo$.  If the crucial vertices of $\nu$ are labelled by $\up\down$ or $\down\up$ (resp.~$\diam\up$ or $\diam\down$), then the crucial vertices of $\mu$ and hence of $\lambda$ are labelled by either $\bigo\cross$ or $\cross\bigo$.  In these cases, let $\eta$ be the partition whose weight diagram has $\down\up$ (resp.~$\diam\up$) at the crucial vertices, and agrees with $\lambda$ at all non-crucial vertices.  In all other cases where the crucial vertices of $\nu$ are not $\bigo\cross$ or $\cross\bigo$, there is a unique move  from Table \ref{table:shifts} at the crucial vertices in $\lambda$.  We let $\eta$ be the partition obtained from $\lambda$ by performing that move.  

Now suppose the crucial vertices of $\nu$ are $\bigo\cross$ or $\cross\bigo$.  In these cases we must choose the crucial vertices of $\eta$ to be $\bigo\cross$ or $\cross\bigo$ respectively.    
If exactly one of the crucial vertices is connected to another vertex, say the $i$th vertex, by a cap (dotted or un-dotted) in $A$, then we let the non-crucial vertices of $\eta$ be identical to $\rev_i(\lambda)$.   Otherwise, we let the non-crucial vertices of $\eta$ be identical to $\lambda$.  

With $\eta$ described above, one can check that $b_{\lambda,\eta}\not=0$, $d_{\eta,\nu}=1$, and $k(\eta)\geq k(\lambda)$ through a case by case examination of the possible weight/cap diagrams for $\nu,\mu,\lambda$, and $\eta$.  Moreover, in each case one can use Theorem \ref{lift}(2) and Proposition \ref{rev size}(1) to show that the only partition $\eta'$ with $b_{\lambda,\eta'}\not=0$ and $d_{\eta',\eta}\not=0$ is $\eta'=\eta$.  Hence, $b_{\lambda,\eta}=a_{\lambda,\eta}$ by (\ref{bad}) and Proposition \ref{b0 implies a0}.  Finally, since $d_{\eta,\nu}=1$ the weight diagrams of $\eta$ and $\nu$ have identical $\cross$'s and $\bigo$'s.  Thus, it follows from Lemma \ref{b012} that $b_{\lambda,\nu}=b_{\lambda,\eta}=a_{\lambda,\eta}$. 
\end{proof}

\begin{theorem}
Suppose $\lambda,\mu$, and $\nu$ are partitions and $\nu$ is a summand of $\lambda\mu$ in $K_\delta$.  Then $k(\nu)\geq\max\{k(\lambda), k(\mu)\}$.   
\end{theorem}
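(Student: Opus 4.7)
My plan is to reduce the statement to a single-box version and then apply Lemma \ref{b to a} together with (\ref{bad}). Since the tensor product in $\uRep(O_\delta)$ is symmetric, $\lambda\mu=\mu\lambda$ in $K_\delta$, so by symmetry it suffices to establish $k(\nu)\geq k(\lambda)$. By Corollary \ref{boxes}, $\mu$ appears as a summand of $\Box^{|\mu|}$, and since all structure constants in $K_\delta$ are non-negative, $\nu$ is therefore a summand of $\lambda\Box^{|\mu|}$. Unrolling one $\Box$ at a time and inducting on $r$, it suffices to prove the following single-box claim: \emph{if $a_{\lambda,\nu}\neq 0$, then $k(\nu)\geq k(\lambda)$.}

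To prove the claim, assume $a_{\lambda,\nu}\neq 0$. By Proposition \ref{b0 implies a0}, $b_{\lambda,\nu}\neq 0$, so Lemma \ref{b to a} provides a partition $\eta$ with $a_{\lambda,\eta}=b_{\lambda,\nu}$, $d_{\eta,\nu}=1$, and $k(\eta)\geq k(\lambda)$. The forward-looking idea is to show that this auxiliary $\eta$ must actually coincide with $\nu$. For this I would expand (\ref{bad}):
\[b_{\lambda,\nu}=\sum_\mu a_{\lambda,\mu}d_{\mu,\nu}.\]
The single term $\mu=\eta$ already contributes $a_{\lambda,\eta}d_{\eta,\nu}=b_{\lambda,\nu}$, so by non-negativity of the $a$'s and $d$'s every remaining term of the sum must vanish. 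But the term $\mu=\nu$ equals $a_{\lambda,\nu}d_{\nu,\nu}=a_{\lambda,\nu}$, which is non-zero by hypothesis. The only way to reconcile these facts is $\nu=\eta$, whence $k(\nu)=k(\eta)\geq k(\lambda)$.

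The heart of the argument is really Lemma \ref{b to a}: the case-by-case construction of $\eta$ from the weight and cap diagrams of $\lambda$ and $\nu$ is what simultaneously controls $k(\eta)$ and produces the uniqueness that forces $a_{\lambda,\eta}=b_{\lambda,\nu}$. Once that lemma is granted, the theorem falls out from non-negativity in $K_\delta$ together with the box reduction, and I do not foresee obstacles beyond those already settled in Lemma \ref{b to a}.
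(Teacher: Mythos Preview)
Your proposal is correct and follows essentially the same route as the paper: reduce to the single-box case via Corollary \ref{boxes}, use Proposition \ref{b0 implies a0} to pass from $a_{\lambda,\nu}\neq0$ to $b_{\lambda,\nu}\neq0$, invoke Lemma \ref{b to a} to obtain $\eta$, and then use the non-negativity of (\ref{bad}) together with $d_{\nu,\nu}=1$ to force $\nu=\eta$. Your write-up is slightly more explicit about the induction and the symmetry reduction, but the argument is the same.
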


\begin{proof} Since $\mu$ is a summand of the product of $|\mu|$ copies of $\Box$ in $K_\delta$ (see Corollary \ref{boxes}), it suffices to consider the case $\mu=\Box$.  Since $k(\Box)=0$, we are required to show $k(\nu)\geq k(\lambda)$ whenever $a_{\lambda,\nu}\not=0$.  By Proposition \ref{b0 implies a0}, we may assume $b_{\lambda,\eta}\not=0$.  Now, let $\eta$ be the partition guaranteed by Lemma \ref{b to a}.  Since $a_{\lambda,\eta}=b_{\lambda,\nu}$ and $d_{\eta,\nu}=1$, it follows from (\ref{bad}) that $d_{\mu,\nu}=0$ or $a_{\lambda,\mu}=0$ whenever $\mu\not=\eta$.  Thus  $\nu=\eta$, which implies $k(\nu)=k(\eta)\geq k(\lambda)$. 
\end{proof}

The following is an immediate consequence of the previous theorem:

\begin{corollary}\label{Ik is thick}
$\I_k$ is a thick ideal in $\uRep(O_\delta)$ for each $k\geq0$.
\end{corollary}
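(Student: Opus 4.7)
The plan is to verify the two defining conditions (i) and (ii) of a thick ideal directly, using the immediately preceding theorem together with the observation that $\I_k$ is defined purely in terms of indecomposable summands of an object. Since $\uRep(O_\delta)$ is a Karoubi envelope of an additive category, it is Krull--Schmidt, so every object admits a (unique up to reordering and isomorphism) decomposition into indecomposables $R(\lambda)$, and membership in $\I_k$ is equivalent to requiring $k(\lambda)\geq k$ for every such summand.

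For condition (i), I would take any $X\in\uRep(O_\delta)$ and $Y\in\I_k$, and write $X\cong\bigoplus_i R(\lambda_i)$, $Y\cong\bigoplus_j R(\mu_j)$ with $k(\mu_j)\geq k$ for each $j$. Distributing $\otimes$ over $\oplus$, every indecomposable summand $R(\nu)$ of $X\otimes Y$ appears as an indecomposable summand of some $R(\lambda_i)\otimes R(\mu_j)$. Translating this into the additive Grothendieck ring $K_\delta$, $\nu$ is a summand of $\lambda_i\mu_j$, so the preceding theorem gives
\[
k(\nu)\geq \max\{k(\lambda_i),k(\mu_j)\}\geq k(\mu_j)\geq k,
\]
which shows $X\otimes Y\in\I_k$.

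For condition (ii), the hypothesis $\beta\circ\alpha=\id_X$ means $\alpha\beta\in\End(Y)$ is an idempotent (since $(\alpha\beta)^2=\alpha(\beta\alpha)\beta=\alpha\beta$), and in the Karoubi envelope this idempotent splits to exhibit $X$ as a direct summand of $Y$. Consequently every indecomposable summand of $X$ occurs as an indecomposable summand of $Y\in\I_k$, and thus satisfies $k(\lambda)\geq k$; hence $X\in\I_k$.

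Neither step presents any real obstacle, since all the substantive work has already been done in the preceding theorem (bounding $k(\nu)$ from below on tensor products). The only thing to be a little careful about is the passage from ``$R(\nu)$ is an indecomposable summand of $R(\lambda_i)\otimes R(\mu_j)$ in the category'' to ``$\nu$ appears with nonzero coefficient in $\lambda_i\mu_j\in K_\delta$,'' but this is precisely the definition of multiplication in $K_\delta$ given in \S4.1.
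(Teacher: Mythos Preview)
Your argument is correct and is exactly the approach the paper has in mind; the paper simply records the corollary as an immediate consequence of the preceding theorem without spelling out the verification of conditions (i) and (ii) that you have written out. Your care about Krull--Schmidt and idempotent splitting is appropriate and fills in the routine details the paper omits.
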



\subsection{Classification of thick ideals}\label{ideal classification subsection}

In this subsection we will show that the $\I_k$'s are the only nonzero thick ideals in $\uRep(O_\delta)$ (see Theorem \ref{I_k theorem}).  Our arguments will be similar to those used  in \cite{Comes}.  First, we describe a special class of partitions which depend on the fixed $\delta\in\Z$:  It follows from  Proposition  \ref{shifts} that $k(\mu)\leq k(\lambda)$ whenever $\mu\in\Rem(\lambda)$.  
Given $k\in\Z_{\geq 0}$, a partition $\lambda$ is called \emph{$k$-minimal} if $k(\lambda)=k$ and each  $\mu\in\Rem(\lambda)$  satisfies $k(\mu)< k$.  The following two propositions concerning $k$-minimal partitions will be useful later:

\begin{proposition}\label{k-min diagram} A partition $\lambda$ is $k$-minimal if and only if $k=\rk(\lambda)$ and the weight diagram of $\lambda$ is of the form
$$\includegraphics{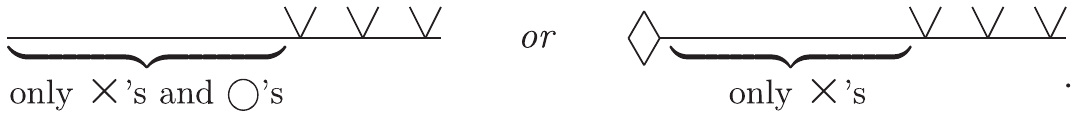}$$
  The second form above occurs when $\lambda=\varnothing$ and $\delta\in\Z_{\leq0}$ is even.
\end{proposition}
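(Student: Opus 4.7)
The plan is to establish both directions of the equivalence by enumerating the box-removing moves in Table~\ref{table:shifts} and tracking their effect on $\df$ and $\rk$. Each removing move falls into one of three types: \emph{shift} moves (2, 3, 4, 5, 6, 7), which preserve both $\df$ and $\rk$; \emph{cap-opening} moves (1, 11, 12, 13), which decrease $\df$ by one and increase $\rk$ by one; and \emph{cap-closing} moves (8, 9, 10), which decrease $\rk$ by one and may increase $\df$ by one depending on whether the new $\up$ thereby created caps with a $\diam$ or $\down$ to its left. Thus a removing move strictly decreases $k=\df+\rk$ exactly when it is of the third type and the new $\up$ has no $\diam$ or $\down$ to its left.

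For the forward direction, assume $\lambda$ is $k$-minimal. I would first show $\df(\lambda)=0$. Suppose not; pick an innermost cap $(i,j)$ in the cap diagram, so that positions strictly between $i$ and $j$ are all $\bigo$ or $\cross$. If $j=i+1$, a cap-opening move applies directly at the adjacent endpoints, yielding $\mu\in\Rem(\lambda)$ with $k(\mu)=k(\lambda)$; if $j>i+1$, a shift move at the left endpoint (or the $\up$-shifting moves 2, 4 applied at the right endpoint) slides the cap without changing $\df$ or $\rk$, again giving $\mu$ with $k(\mu)=k(\lambda)$. Either case contradicts minimality, forcing $\df(\lambda)=0$ and $k=\rk(\lambda)$. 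The same reasoning rules out any $\up$ in the diagram (else one of the moves 1, 2, 4, or 6 preserves $k$), any $\down\bigo$ or $\down\cross$ pattern (moves 3, 5), and any $\diam\bigo$ pattern (move 7). Thus $\lambda$'s diagram consists of $\bigo$s, $\cross$s, and possibly a $\diam$ at vertex 0, all strictly to the left of the $\down$s, with no $\diam\bigo$. If a $\diam$ is present, I would further rule out any $\bigo\cross$ or $\cross\bigo$ pattern at vertices $\geq 1$: applying move 9 or 10 there creates a new $\up$ that caps with the $\diam$, giving $\df(\mu)+\rk(\mu)=1+(\rk(\lambda)-1)=k(\lambda)$, contradicting minimality. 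Combined with no $\diam\bigo$, this forces $\lambda$'s diagram to be $\diam\cross^b\down^\infty$ (the second form), and a direct computation using the definition of $X_\lambda$ identifies this with $\lambda=\varnothing$ for $\delta\in\Z_{\leq0}$ even. If no $\diam$ is present, the diagram is an arrangement of $\bigo$s and $\cross$s followed by $\down$s (the first form).

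For the backward direction, take $\lambda$ with a diagram of one of the two claimed forms. Both forms contain no $\up$, hence $\df(\lambda)=0$ and $k(\lambda)=\rk(\lambda)=k$. For the first form, the only applicable removing moves are 8, 9, 10 acting on a $\bigo\cross$ or $\cross\bigo$ pair; each decreases $\rk$ by one, and since no $\diam$ is present, any newly-created $\up$ has only $\bigo$s and $\cross$s to its left, so no cap forms and $\df(\mu)=0$, giving $k(\mu)=k-1<k$. For the second form, $\lambda=\varnothing$ and $\Rem(\lambda)=\emptyset$, so $k$-minimality holds vacuously.

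The main obstacle is the innermost-cap analysis in the first step of the forward direction, especially for dotted caps between two $\up$s at positions $i<j$, where the Stage~2 condition constrains the marks both between $i$ and $j$ and to the left of $i$. The shift-or-open strategy still applies: the leftmost $\up$ is either at the very leftmost vertex (when move 1 applies) or immediately preceded by $\bigo$, $\cross$, or $\diam$ (when move 2, 4, or 11 applies), and in each case a shift or cap-opening produces $\mu$ with $k(\mu)=k(\lambda)$, contradicting minimality.
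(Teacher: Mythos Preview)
Your approach is essentially the paper's, and the logic is sound, but you take a longer route. The paper skips your ``first show $\df(\lambda)=0$'' step entirely: it simply observes that moves 8, 9, 10 are the only removing moves that can strictly decrease $k$, so $k$-minimality directly forbids the starting configurations of all other moves---namely $\up$ at vertex $\frac12$ and the adjacent pairs $\bigo\up$, $\cross\up$, $\diam\up$, $\down\up$, $\down\bigo$, $\down\cross$, $\diam\bigo$. From this list one reads off immediately that no $\up$ appears (hence $\df=0$ follows rather than precedes) and that all $\down$'s sit to the right. Your innermost-cap analysis reaches the same conclusion but is more work; in particular the dotted-cap case becomes clean only once you pass to the leftmost $\up$ in the whole diagram (as you do at the end), at which point the innermost cap played no role.

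Two small inaccuracies that do not affect the outcome: move~1 ($\up\to\down$ at vertex $\frac12$) neither creates nor destroys any $\bigo$ or $\cross$, so $\rk$ is unchanged, and one checks $\df$ is unchanged too---so in your taxonomy it is a ``shift,'' not ``cap-opening,'' though your conclusion that $k$ is preserved stands. Similarly move~8 ($\bigo\cross\to\diam\down$) produces a $\diam$ and a $\down$ rather than a $\up$, so your criterion ``the new $\up$ has no $\diam$ or $\down$ to its left'' does not literally apply to it and needs a separate (easy) check.
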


\begin{proof} Using Proposition \ref{shifts}, it is easy to check that the weight diagrams above correspond to $k$-minimal partitions.  On the other hand, if $\lambda$ is $k$-minimal, then the only moves from Table \ref{table:shifts} that remove a box from $\lambda$ and have a chance of resulting in a partition $\mu$ with $k(\mu)<k(\lambda)$ are $\bigo\cross\leadsto\diam\down$, $\bigo\cross\leadsto\up\down$, and $\cross\bigo\leadsto\up\down$.  Hence, the weight diagram of $\lambda$ cannot have  $\up$ at the vertex $\frac{1}{2}$, and cannot have any of the following adjacent pairs of vertices:  $\bigo\up, \cross\up, \diam\up, \down\up, \down\bigo, \down\cross, \diam\bigo$.  Hence, $\lambda$ cannot have its leftmost mark $\up$ and cannot have anything to the left of a $\up$, which implies $\lambda$ cannot have any $\up$'s.  Moreover, the only mark allowed to the right of a $\down$ is another $\down$, so the weight diagram of $\lambda$ must start (from the left) with a finite sequence of $\diam$'s, $\bigo$'s, and $\cross$'s, followed by an infinite sequence of $\down$'s.  In particular, $\df(\lambda)=0$, or equivalently $k(\lambda)=\rk(\lambda)$.  Finally, if a $\diam$ is present, then there cannot be any $\bigo$'s.  Otherwise, the leftmost $\bigo$ must have a $\cross$ to its left, and the move $\cross\bigo\leadsto\up\down$ will result in a partition $\mu$ whose cap diagram contains an un-dotted cap connecting the resulting $\up$ with the $\diam$ so that $\rk(\mu)=\rk(\lambda)-1$ and $\df(\mu)=1$, whence $k(\mu)=k(\lambda)=k$.  
\end{proof}

\begin{proposition}\label{k-min prop}
Suppose $\lambda$ is a partition.  If $a_{\lambda,\mu}=0$ for all $\mu\in\Rem(\lambda)$, then $\lambda$ is $k$-minimal for some $k\in\Z_{\geq 0}$.  
\end{proposition}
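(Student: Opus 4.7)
I plan to prove the contrapositive: suppose $\lambda$ is not $k$-minimal for any $k$, and exhibit some $\nu\in\Rem(\lambda)$ with $a_{\lambda,\nu}\neq 0$. From the proof of Proposition~\ref{k-min diagram}, the weight diagram of $\lambda$ must contain at least one of the following ``bad'' configurations: a $\up$ at vertex $\frac{1}{2}$, or one of the adjacent pairs $\bigo\up$, $\cross\up$, $\diam\up$, $\down\up$, $\down\bigo$, $\down\cross$, $\diam\bigo$ at some consecutive positions. Each such configuration admits a corresponding box-removal move from Table~\ref{table:shifts}, producing a candidate $\nu\in\Rem(\lambda)$; the goal is to show $a_{\lambda,\nu}\neq 0$ for this $\nu$ (possibly refining the choice if more than one such move is available).

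The first step is to show $b_{\lambda,\nu}\geq 1$. Using $\lift_\delta(\Box)=\Box$ (Proposition~\ref{lift box}) together with (\ref{generic tensor box}), the identity $\lift_\delta(\lambda\,\Box)=\lift_\delta(\lambda)\cdot\Box$ in $K_t$ can be rewritten as
\[
b_{\lambda,\nu}\;=\;\sum_{\rho\,:\,\nu\,\in\,\Add(\rho)\cup\Rem(\rho)} d_{\lambda,\rho}\;\geq\; d_{\lambda,\lambda}\;=\;1,
\]
where the inequality uses that $\nu\in\Rem(\lambda)$ implies $\lambda\in\Add(\nu)$ and Theorem~\ref{lift}(2). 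Now Lemma~\ref{b to a} yields a partition $\eta$ with $a_{\lambda,\eta}=b_{\lambda,\nu}\geq 1$ and $d_{\eta,\nu}=1$. It remains to argue $\eta\in\Rem(\lambda)$. Tracking the maximum-size summand in the equation $\lift_\delta(\lambda\,\Box)=\lift_\delta(\lambda)\cdot\Box$ via Theorem~\ref{lift}(2) gives $|\eta|\leq|\lambda|+1$, and Brauer-diagram parity forces $|\eta|\equiv|\lambda|+1\pmod{2}$. Combined with $d_{\eta,\nu}=1$, $|\nu|=|\lambda|-1$, and Proposition~\ref{rev size}(2), this leaves $|\eta|\in\{|\lambda|-1,\,|\lambda|+1\}$. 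If $|\eta|=|\lambda|-1$, then $\eta=\nu$ by Proposition~\ref{rev size}(2), completing the proof.

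The main obstacle is ruling out the case $|\eta|=|\lambda|+1$, i.e.\ $\eta\in\Add(\lambda)$. In that situation, $d_{\eta,\nu}=1$ with $|\eta|-|\nu|=2$ forces $\nu=\rev_A(\eta)$ where, by Proposition~\ref{rev size}(1) and an inspection of cap size-changes, $A$ must consist of a single regular $(\down,\up)$-cap between adjacent vertices $(i,i{+}1)$ (or, when $\delta$ is even, possibly the exceptional $(\diam,\up)$-cap at $(0,1)$); dotted caps necessarily decrease the size by at least $4$ and are therefore excluded. Such a cap imposes a rigid local structure on $\eta$'s weight diagram; simultaneously, $\eta$ is obtained from $\lambda$ by a single box-adding move and $\nu$ from $\lambda$ by the chosen box-removing move, so counting the positions where $\eta,\nu,\lambda$ pairwise disagree severely constrains the geometry. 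A case-by-case check of the eight ``bad'' configurations shows this geometry is never satisfied---if necessary by selecting a different admissible removal move to avoid a spurious $\eta$. This final verification, similar in flavor to the case analysis performed in the proof of Lemma~\ref{b to a}, is the most delicate part of the argument.
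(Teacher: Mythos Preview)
Your proof is correct and is essentially the contrapositive of the paper's argument, with one unnecessary detour. The paper proceeds directly: assuming $a_{\lambda,\mu}=0$ for every $\mu\in\Rem(\lambda)$, it expands $\lift_\delta(\lambda\Box)$, observes that $b_{\lambda,\mu}\geq 1$, and then uses \eqref{bad} to produce, for each such $\mu$, some $\nu\neq\mu$ with $a_{\lambda,\nu}\neq 0$ and $d_{\nu,\mu}\neq 0$. Size and parity force $\nu\in\Add(\lambda)$ and $|\nu|-|\mu|=2$, so $\mu=\rev_{\{(i,i+1)\}}(\nu)$; examining Table~\ref{table:shifts} then shows the $i,i{+}1$ vertices of $\lambda$ must be $\bigo\cross$ or $\cross\bigo$. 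Since this holds for \emph{every} $\mu\in\Rem(\lambda)$, none of the ``bad'' configurations can occur, and one finishes exactly as in Proposition~\ref{k-min diagram}. Your contrapositive simply picks one bad configuration, takes the corresponding $\nu\in\Rem(\lambda)$, and rules out the existence of such an $\eta\in\Add(\lambda)$---the same local analysis, run in reverse.

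The appeal to Lemma~\ref{b to a} is a detour you can drop: from $b_{\lambda,\nu}=\sum_\eta a_{\lambda,\eta}d_{\eta,\nu}\geq 1$ and $a_{\lambda,\nu}=0$ you immediately get some $\eta\neq\nu$ with $a_{\lambda,\eta}\neq 0$ and $d_{\eta,\nu}\neq 0$, landing in precisely the $|\eta|=|\lambda|+1$ case you then analyze. Also, your escape clause ``if necessary by selecting a different admissible removal move'' is never needed: once you check (as the paper does) that the $\eta\in\Add(\lambda)$ scenario forces $\lambda$ to have $\bigo\cross$ or $\cross\bigo$ at the move positions, \emph{every} bad-configuration choice of $\nu$ already avoids this, with no refinement required.
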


\begin{proof} Using Theorem \ref{lift}(1) and Proposition \ref{lift box}, followed by Corollary \ref{dees} and Proposition \ref{rev size}(2), followed by (\ref{generic tensor box}), we have 
$$\lift_\delta(\lambda\Box)=\lift_\delta(\lambda)\Box=\lambda\Box+\sum_{\eta\atop|\eta|<|\lambda|-1}d_{\lambda,\eta}\eta\Box=\sum_{\nu\in\Add(\lambda)}\nu+\sum_{\mu\in\Rem(\lambda)}\mu+\sum_{\eta\atop|\eta|<|\lambda|}c_\eta\eta$$ for some $c_\eta\in\Z$.  
Now fix $\mu\in\Rem(\lambda)$.  If $a_{\lambda,\mu}=0$ then by the calculation above along with (\ref{bad}), it follows that there exists $\nu\not=\mu$ with $d_{\nu,\mu}\not=0$ and $b_{\lambda,\nu}\not=0$.  Hence, by Theorem \ref{lift}(2), $|\nu|>|\mu|=|\lambda|-1$.  Thus, by the calculation above, $\nu\in\Add(\lambda)$.  In particular, $|\nu|=|\mu|+2$.  It now follows from Corollary \ref{dees} and Proposition \ref{rev size}(1) that $\mu=\rev_{\{(i,i+1)\}}(\nu)$ for some $i$. By examining Table \ref{table:shifts}, it is apparent that the only way for $\mu=\rev_{\{(i,i+1)\}}(\nu)$ with  $\mu\in\Rem(\lambda)$ and $\nu\in\Add(\lambda)$, is when the $i, i+1$ vertices of $\lambda$ are either $\bigo\cross$ or $\cross\bigo$.  Since $\mu$ is an arbitrary element of $\Rem(\lambda)$, it follows from Proposition \ref{shifts} that the weight diagram of $\lambda$ cannot have  $\up$ at the vertex $\frac{1}{2}$, and cannot have any of the following adjacent pairs of vertices:  $\bigo\up, \cross\up, \diam\up, \down\up, \down\bigo, \down\cross, \diam\bigo$.  Now proceed as in the proof of Proposition \ref{k-min diagram} to show that the weight diagram of $\lambda$ has the form of a $k$-minimal partition.  
\end{proof}

We will make use of the following notation throughout the rest of this section. Given a partition $\lambda$, we let $\<\lambda\>$ denote the smallest thick ideal in $K_\delta$ containing $\lambda$.  More explicitly, a partition $\mu\in\<\lambda\>$ if and only if 
$\mu$ is a summand of $\lambda\nu$ for some partition $\nu$.  Moreover, $\<\lambda\>$ is additive (i.e.~$x+y\in\<\lambda\>$ whenever $x,y\in\<\lambda\>$).  We now prove three lemmas concerning $\<\lambda\>$, which will allow us to prove the main result of this section, Theorem \ref{I_k theorem}.

\begin{lemma}\label{inclusion reverse}
 If $\mu\subset\lambda$, then 
$\<\lambda\>\subseteq\<\mu\>$.
\end{lemma}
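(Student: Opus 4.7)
The plan is to proceed by induction on $|\lambda|-|\mu|$. When $|\lambda|=|\mu|$, the condition $\mu\subset\lambda$ forces $\mu=\lambda$ and the conclusion is trivial. When $|\lambda|>|\mu|$, I pick $\mu'$ with $\mu\subset\mu'\subset\lambda$ and $|\mu'|=|\mu|+1$, so that $\mu'\in\Add(\mu)$. Induction then gives $\<\lambda\>\subseteq\<\mu'\>$, so the task reduces to showing $\mu'\in\<\mu\>$ whenever $\mu'\in\Add(\mu)$.

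To achieve this, I take the multiplier $\Box$ and aim to verify $a_{\mu,\mu'}\geq 1$ in the notation of (\ref{a and b}), which will exhibit $\mu'$ as a summand of $\mu\,\Box$ in $K_\delta$. Since $\lift_\delta$ is a ring homomorphism (Theorem~\ref{lift}(1)) with $\lift_\delta(\Box)=\Box$ (Proposition~\ref{lift box}), one computes $\lift_\delta(\mu\,\Box)=\lift_\delta(\mu)\,\Box=\sum_\eta d_{\mu,\eta}\,\eta\,\Box$ and expands each $\eta\,\Box$ in $K_t$ via (\ref{generic tensor box}). The coefficient $b_{\mu,\mu'}$ of $\mu'$ in the result already receives a contribution of $1$ from $\eta=\mu$ alone, using $d_{\mu,\mu}=1$ (Theorem~\ref{lift}(2)) and $\mu'\in\Add(\mu)$; thus $b_{\mu,\mu'}\geq 1$.

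Applying Lemma~\ref{b to a} to the pair $(\mu,\mu')$ produces a partition $\eta$ with $a_{\mu,\eta}=b_{\mu,\mu'}\geq 1$ and $d_{\eta,\mu'}=1$. The extra ingredient is a size bound: because $R(\mu)\otimes R(\Box)=\im(e_\mu\otimes\id_1)$ decomposes through primitive idempotents of $B_{|\mu|+1}(\delta)$, every partition summand $\eta$ of $\mu\,\Box$ in $K_\delta$ satisfies $|\eta|\leq|\mu|+1$. If $\eta\neq\mu'$, then $d_{\eta,\mu'}\neq 0$ combined with Theorem~\ref{lift}(2) would force $|\mu'|<|\eta|\leq|\mu|+1=|\mu'|$, a contradiction. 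Hence $\eta=\mu'$, so $a_{\mu,\mu'}\geq 1$ and therefore $\mu'\in\<\mu\>$.

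The main obstacle is that $a_{\mu,\mu'}$ cannot be read off directly from the generic formula in $K_t$: summands can be absorbed or redistributed by the lift, so the obvious generic lower bound does not automatically descend to $K_\delta$. Routing through the lifted multiplicity $b_{\mu,\mu'}$, invoking Lemma~\ref{b to a}, and then pinning $\eta$ down to $\mu'$ via the Karoubi-level size bound is what makes the argument go through.
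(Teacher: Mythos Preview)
Your proof is correct, but it takes a longer route than the paper's. The paper dispenses with the induction and with Lemma~\ref{b to a} entirely: setting $j=|\lambda|-|\mu|$, it observes that $\lift_\delta(\mu\,\Box^j)=\mu\,\Box^j+\sum_{|\nu|<|\mu|}d_{\mu,\nu}\,\nu\,\Box^j$ (by Theorem~\ref{lift}(1)(2) and Proposition~\ref{lift box}), so by (\ref{generic tensor box}) the partition $\lambda$ is a summand of $\lift_\delta(\mu\,\Box^j)$ of \emph{maximal} size $|\mu|+j$; unitriangularity (Theorem~\ref{lift}(2)) then forces $\lambda$ to already be a summand of $\mu\,\Box^j$ in $K_\delta$. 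Note that your final step is really this same maximal-size argument in disguise: once you have the Karoubi bound $|\eta|\leq|\mu|+1$, equation (\ref{bad}) together with Theorem~\ref{lift}(2) already gives $a_{\mu,\mu'}=b_{\mu,\mu'}$ directly, so the appeal to Lemma~\ref{b to a} is superfluous. What your approach buys is nothing beyond what the paper's gives; the paper's argument is strictly shorter and uses only the lift's unitriangularity rather than the more delicate Lemma~\ref{b to a}.
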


\begin{proof}
Let $j=|\lambda|-|\mu|$.  By Theorem \ref{lift}(1)\&(2) and Proposition \ref{lift box}, $$\lift_\delta(\mu\Box^j)=\mu\Box^j+\sum_{\nu\atop|\nu|<|\mu|}d_{\mu,\nu}\nu\Box^j.$$  Hence, by (\ref{generic tensor box}), $\lambda$ is a summand of $\lift_\delta(\mu\Box^j)$ with maximal size.  Therefore, by Theorem \ref{lift}(2), $\lambda$ is a summand of $\mu\Box^j$ in $K_\delta$, hence $\lambda\in\<\mu\>$. 
\end{proof}

\begin{lemma}\label{get min}
For each partition $\lambda$, there exists $k\in\Z_{\geq0}$ and a $k$-minimal partition $\mu$ such that $\<\mu\>=\<\lambda\>$.  
\end{lemma}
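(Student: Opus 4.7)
The plan is to induct on $|\lambda|$, using Proposition \ref{k-min prop} to locate a strict size-decreasing descent and Lemma \ref{inclusion reverse} to ensure that this descent preserves the generated thick ideal. For the base case $|\lambda| = 0$ we have $\lambda = \varnothing$, which is trivially $k$-minimal for $k = k(\varnothing)$ since $\Rem(\varnothing) = \emptyset$, so we may take $\mu = \varnothing$.

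For the inductive step, if $\lambda$ itself is $k$-minimal for $k = k(\lambda)$, we simply take $\mu = \lambda$. Otherwise the contrapositive of Proposition \ref{k-min prop} supplies some $\mu' \in \Rem(\lambda)$ with $a_{\lambda, \mu'} \neq 0$, which by the definition in (\ref{a and b}) means that $\mu'$ occurs as a summand of $\lambda\,\Box$ in $K_\delta$. I would then argue $\<\lambda\> = \<\mu'\>$ by two opposite inclusions. On one hand, since thick ideals are closed under tensor products with arbitrary objects and under direct summands, the summand $\mu'$ of $\lambda\,\Box$ lies in $\<\lambda\>$, giving $\<\mu'\> \subseteq \<\lambda\>$. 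On the other hand, $\mu' \subset \lambda$ as Young diagrams (by definition of $\Rem(\lambda)$), so Lemma \ref{inclusion reverse} yields $\<\lambda\> \subseteq \<\mu'\>$.

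Combining these two inclusions and applying the inductive hypothesis to $\mu'$ (whose size is $|\lambda| - 1 < |\lambda|$) produces the desired $k$-minimal partition $\mu$ with $\<\mu\> = \<\mu'\> = \<\lambda\>$, completing the induction.

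I do not expect any genuine obstacle: Proposition \ref{k-min prop} is tailor-made to certify that every non-$k$-minimal partition admits a strict-size descent inside its own thick ideal, and Lemma \ref{inclusion reverse} exactly ensures that removing a single box cannot enlarge the generated ideal. The only small bookkeeping step is translating $a_{\lambda, \mu'} \neq 0$ via (\ref{a and b}) into the statement that $\mu'$ is an honest summand of $\lambda \, \Box$, after which both inclusions are immediate.
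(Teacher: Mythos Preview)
Your proof is correct and follows essentially the same approach as the paper: induct on $|\lambda|$, and when $\lambda$ is not $k$-minimal invoke the contrapositive of Proposition~\ref{k-min prop} to find $\mu'\in\Rem(\lambda)$ with $a_{\lambda,\mu'}\neq 0$, then use Lemma~\ref{inclusion reverse} together with $\mu'$ being a summand of $\lambda\,\Box$ to conclude $\<\mu'\>=\<\lambda\>$ and apply induction. The only difference is that you spell out the base case and the two inclusions a bit more explicitly.
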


\begin{proof} We induct on $|\lambda|$.  If  $\lambda$ is $k$-minimal  for some $k$, we set $\mu=\lambda$.  Otherwise, by Proposition \ref{k-min prop} there exists $\lambda'$ with $a_{\lambda,\lambda'}\not=0$ (hence $\lambda'\in\<\lambda\>$) and $\lambda'\in\Rem(\lambda)$ (hence $\lambda\in\<\lambda'\>$ by Lemma \ref{inclusion reverse}).  Since $|\lambda'|=|\lambda|-1$, by induction we can find a $k$-minimal partition $\mu$ for some $k$ with $\<\mu\>=\<\lambda'\>=\<\lambda\>$.  
\end{proof}

\begin{lemma}\label{min to min}
Fix nonnegative integers $k\leq k'$.  If $\lambda$ is $k$-minimal and $\mu$ is $k'$-minimal, then $\<\mu\>\subseteq\<\lambda\>$.  
\end{lemma}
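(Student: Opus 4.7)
\emph{Plan.} To show $\langle\mu\rangle \subseteq \langle\lambda\rangle$ it suffices to place $\mu$, or indeed any $\lambda' \subset \mu$, inside $\langle\lambda\rangle$: in the latter case Lemma \ref{inclusion reverse} gives $\langle\mu\rangle \subseteq \langle\lambda'\rangle \subseteq \langle\lambda\rangle$. The strategy will be to construct a ``walk'' of box-addition/removal moves starting from $\lambda$ and ending at some $\lambda' \subset \mu$, where each step produces a genuine summand in $K_\delta$ (not merely in the generic ring $K_t$).

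By Proposition \ref{k-min diagram} the weight diagrams of both $\lambda$ and $\mu$ have a prescribed form: a finite prefix of marks in $\{\diam, \bigo, \cross\}$ followed by an infinite tail of $\down$'s, with $\df = 0$ and rank equal to $k$ (resp.\ $k'$). Using the move table in Proposition \ref{shifts}, I would build a sequence $\lambda = \lambda_0, \lambda_1, \ldots, \lambda_j = \lambda'$ in which each $\lambda_{i+1}$ is a summand of $\lambda_i\,\Box$ in $K_\delta$, i.e.\ $a_{\lambda_i, \lambda_{i+1}} \neq 0$ in the notation of (\ref{a and b}). Concretely, I would first contract $\lambda$ via remove-moves to a small $k$-minimal partition, then expand via add-moves of type $\bigo\,\down \leadsto \up\,\bigo$ and $\bigo\,\cross \leadsto \up\,\down$ (and variants), reshaping the finite prefix to be compatible with the prefix of $\mu$ while raising the rank from $k$ to $k'$. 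That each step yields a nonzero $a$-coefficient in $K_\delta$, rather than being cancelled, is verified using (\ref{bad}) together with Proposition \ref{b0 implies a0} and especially Lemma \ref{b to a}, which characterises exactly which lifted summands survive.

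The main obstacle is the combinatorial case analysis. The moves of Table \ref{table:shifts} are many, and the singular cancellations in $K_\delta$ flagged by Lemma \ref{b012} (the coefficient-$2$ cases at adjacent $\bigo\,\cross$ or $\cross\,\bigo$) must be navigated carefully so that each chosen intermediate partition survives rather than dropping out of $\lambda_i\,\Box$. The exceptional $k$-minimal shape containing a $\diam$, possible only when $\delta \in \Z_{\leq 0}$ is even, requires a separate treatment. A convenient organisational simplification is to first establish that any two $k$-minimal (resp.\ $k'$-minimal) partitions generate the same thick ideal, by running the same kind of walk internally to a fixed $\I_k$; this reduces the theorem to walking between a single pair of canonical representatives, after which Lemma \ref{inclusion reverse} completes the inclusion.
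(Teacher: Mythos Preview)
Your overall architecture---reduce to showing that any two $k$-minimal partitions generate the same thick ideal, and then compare canonical representatives via Lemma \ref{inclusion reverse}---is exactly the paper's strategy. But two aspects of your sketch would not go through as written.

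First, your proposed walk begins by \emph{contracting} $\lambda$ via remove-moves. This is impossible: if $\lambda$ is $k$-minimal and $\mu\in\Rem(\lambda)$, then $k(\mu)<k$ by definition, so $\mu\notin\I_k$; but $\langle\lambda\rangle\subseteq\I_k$ by Corollary~\ref{Ik is thick}, hence $a_{\lambda,\mu}=0$ for \emph{every} $\mu\in\Rem(\lambda)$. No remove-move from a $k$-minimal partition survives in $K_\delta$. The paper avoids this entirely: for the $k<k'$ step it constructs an explicit chain $\varnothing=\lambda^{(0)}\subset\lambda^{(1)}\subset\cdots$ of $k$-minimal partitions (one for each $k$) purely by adding boxes, and invokes Lemma~\ref{inclusion reverse} directly---no $a$-coefficients need to be checked at all for this part.

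Second, Lemma~\ref{b to a} is not the right instrument for verifying that a \emph{specific} target $\nu$ has $a_{\lambda,\nu}\neq0$; it only produces \emph{some} $\eta$ with $a_{\lambda,\eta}\neq0$ and $d_{\eta,\nu}=1$. For the same-$k$ step the paper instead reduces (via Proposition~\ref{k-min diagram}) to swapping a single adjacent pair $\cross\bigo\leftrightarrow\bigo\cross$. This is handled by one add-move (giving $\nu\in\Add(\lambda)$, a summand of $\lambda\Box$ because it has maximal size in $\lift_\delta(\lambda\Box)$) followed by one remove-move (giving $\mu\in\Rem(\nu)$, a summand of $\nu\Box$ because a direct check shows $d_{\eta,\mu}=0$ for all $\eta\in\Add(\nu)$). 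This two-step swap together with Theorem~\ref{lift}(2) replaces the open-ended case analysis you anticipate.
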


\begin{proof}
First, set $\lambda^{(0)}=\varnothing$ and recursively define the $k$-minimal partition $\lambda^{(k)}$ for $k>0$ by modifying the weight diagram of $\lambda^{(k-1)}$ as follows:  Find the leftmost $\down$.  Since $\lambda^{(k-1)}$ is $k-1$-minimal, there are only $\cross$'s, $\bigo$'s, or a  $\diam$ to the left of that $\down$ (Proposition \ref{k-min diagram}).  Use the moves $\cross\down\leadsto\down\cross$ and $\bigo\down\leadsto\down\bigo$ until the $\down$ is at vertex $1$ or $\frac{1}{2}$.  If the $\down$ is at vertex $1$, and there is a $\diam$ at vertex $0$, then perform the  move $\diam\down\leadsto\bigo\cross$.  Otherwise, replace the $\down$ with an $\up$ and then use the moves $\up\cross\leadsto\cross\up$ and $\up\bigo\leadsto\bigo\up$ until the $\up$ has only $\down$'s to its right, and then perform the move $\up\down\leadsto\bigo\cross$.  Let $\lambda^{(k)}$ denote the partition with the resulting weight diagram.  By Proposition \ref{k-min diagram}, $\lambda^{(k)}$ is $k$-minimal.  By Proposition \ref{shifts}, $\lambda^{(k-1)}\subset\lambda^{(k)}$ for all $k>0$.  Hence, by Lemma \ref{inclusion reverse}, $\<\lambda^{(k)}\>\subseteq\<\lambda^{(k-1)}\>$ for all $k>0$.  

Now, it suffices to show $\<\mu\>=\<\lambda\>$ whenever $\lambda$ and $\mu$ are both $k$-minimal.  To do so, by Proposition \ref{k-min diagram} it suffice to show $\mu\in\<\lambda\>$ whenever the weight diagram of $\mu$ is obtained from $\lambda$ by swapping an adjacent $\cross$ and $\bigo$.  Let $\nu\in\Add(\lambda)$ be the partition whose weight diagram is obtained from $\lambda$ by a move of type $\cross\bigo\leadsto\down\up$ or $\bigo\cross\leadsto\down\up$.  Using Theorem \ref{lift}(1)\&(2) and (\ref{generic tensor box}), it follows that $\nu$ is a summand of $\lift_\delta(\lambda\Box)$ of maximal size, hence $\nu$ is a summand of $\lambda\Box$ in $K_\delta$.  Therefore $\nu\in\<\lambda\>$.  Now let $\mu\in\Rem(\nu)$ be the partition whose weight diagram is obtained from $\nu$ by a move of type $\down\up\leadsto\cross\bigo$ or $\down\up\leadsto\bigo\cross$.  In this case, $\mu$ is a summand of $\lift_\delta(\nu\Box)$ and the only summands of $\lift_\delta(\nu\Box)$ with size larger than $|\mu|$ are the partitions in $\Add(\nu)$.  It follows from Proposition \ref{shifts} and Corollary \ref{dees} that $d_{\eta,\mu}=0$ for all $\eta\in\Add(\nu)$.  Hence, by Theorem \ref{lift}(2), $\mu$ is a summand of $\nu\Box$ in $K_\delta$.  Thus $\mu\in\<\nu\>\subseteq\<\lambda\>$.  
\end{proof}

\begin{theorem}\label{I_k theorem}
For each $\delta\in\Z$, the set $\{\I_k~|~k\in\Z_{\geq0}\}$ is a complete set of pairwise distinct nonzero thick ideals in $\uRep(O_\delta)$.  
\end{theorem}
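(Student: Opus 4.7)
The plan is to establish three things: each $\I_k$ is nonzero, the collection $\{\I_k\}_{k\geq 0}$ consists of pairwise distinct sets, and every nonzero thick ideal has the form $\I_k$. The first two claims follow from a single observation: the inductive construction of $\lambda^{(k)}$ in the proof of Lemma \ref{min to min} produces, for each $k\geq 0$, a $k$-minimal partition. Thus $\lambda^{(k)}\in\I_k\setminus\I_{k+1}$, which simultaneously establishes that each $\I_k$ is nonempty and that the chain $\I_0\supsetneq\I_1\supsetneq\I_2\supsetneq\cdots$ is strict.

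The substantive content is the classification: every nonzero thick ideal $\I$ equals some $\I_k$. My plan is to let $k=\min\{k(\lambda)\mid\lambda\in\I\}$, which is well-defined since $\I$ is nonempty and $k$-values are nonnegative integers. The inclusion $\I\subseteq\I_k$ is then immediate from the definition of $\I_k$. For the reverse inclusion, I would first show that $\I$ contains a $k$-minimal partition: pick $\lambda\in\I$ with $k(\lambda)=k$ and apply Lemma \ref{get min} to produce a $k'$-minimal $\mu$ with $\<\mu\>=\<\lambda\>$. The monotonicity theorem preceding Corollary \ref{Ik is thick} says $k$-values are nondecreasing along summands of tensor products, hence nondecreasing on $\<\mu\>$; applying this to $\lambda\in\<\mu\>$ gives $k\geq k'$, and applying it to $\mu\in\<\lambda\>$ gives $k'\geq k$. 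Thus $k'=k$ and $\mu\in\I$ is $k$-minimal.

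Finally, to see $\I_k\subseteq\I$, I would take an arbitrary $\nu$ with $k(\nu)\geq k$ and apply Lemma \ref{get min} again to obtain a $k''$-minimal $\mu''$ with $\<\mu''\>=\<\nu\>$; the same monotonicity argument forces $k''=k(\nu)\geq k$. Lemma \ref{min to min}, applied to the $k$-minimal $\mu$ and the $k''$-minimal $\mu''$, then yields $\<\mu''\>\subseteq\<\mu\>\subseteq\I$, so $\nu\in\I$ as required.

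The main conceptual obstacle has already been absorbed into Lemmas \ref{get min} and \ref{min to min}; the work remaining is really the careful identification of the minimum $k$-value of a thick ideal with the $k$-value of a $k$-minimal representative it contains. Once that identification is in hand, Lemma \ref{min to min} says everything of $k$-value at least $k$ is forced into $\I$, and the classification follows.
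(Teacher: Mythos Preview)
Your proof is correct and follows essentially the same strategy as the paper, hinging on Lemmas \ref{get min} and \ref{min to min} together with the monotonicity of $k(\cdot)$ under tensor products. The only difference is that the paper appeals directly to Proposition \ref{k-min prop} (together with the minimality of $k$) to argue that a chosen $\nu\in\I$ with $k(\nu)=k$ is already $k$-minimal, whereas you instead apply Lemma \ref{get min} and monotonicity to produce a $k$-minimal representative in $\I$; your route is slightly more explicit at that step.
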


\begin{proof}  Suppose $\I$ is a nonzero thick ideal in $\uRep(O_\delta)$ and let $k=\min\{k(\lambda)~|~\lambda\in\I\}$.  We will show $\I=\I_k$.  It follows from the definition of $\I_k$ and the choice of $k$ that $\I\subseteq\I_k$.  To show the opposite inclusion, choose a nonzero $x\in\I_k$.  Then $x=\lambda^{(1)}+\cdots+\lambda^{(l)}$ for some partitions $\lambda^{(1)},\ldots,\lambda^{(l)}$ with $k(\lambda^{(i)})\geq k$ for each $i$.  By Lemma \ref{get min}, for each $1\leq i\leq l$, there exists $k_i\in\Z_{\geq0}$ and a $k_i$-minimal partition $\mu^{(i)}$ such that $\<\mu^{(i)}\>=\<\lambda^{(i)}\>\subseteq\I_k$, whence $k_i\geq k$.    Now, pick $\nu\in\I$ such that $k(\nu)=k$.  It follows from Proposition \ref{k-min prop} and the choice of $k$ that $\nu$ is $k$-minimal.  Thus, by Lemma \ref{min to min}, $\lambda^{(i)}\in\<\mu^{(i)}\>\subseteq\<\nu\>$ for all $i$.  Since $\<\nu\>$ is additive, $x\in\<\nu\>\subseteq\I$.
\end{proof}

By Andre and Kahn \cite{Andre-Kahn} every $\k$-linear rigid symmetric monoidal category has a largest proper tensor ideal $\mathcal{N}$ (the negligible morphisms). To this tensor ideal we can associate a thick ideal consisting of objects $X$ satisfying $\id_X \in \mathcal{N}(X,X)$. The associated thick ideal will again be denoted $\mathcal{N}$.  It is easy to show that $\mathcal{N}$ is the largest proper thick ideal, and that $X\in\mathcal{N}$ if and only if $\dim X=0$.

\begin{corollary}\label{k=0} We have $\mathcal{N} = \mathcal{I}_1$. Hence $\dim R(\lambda) \neq 0$ if and only if $k(\lambda) = 0$.
\end{corollary}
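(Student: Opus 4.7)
The plan rests on combining Theorem \ref{I_k theorem} with the two facts about $\mathcal{N}$ recalled just before the corollary: $\mathcal{N}$ is the largest proper thick ideal of $\uRep(O_\delta)$, and $X\in\mathcal{N}$ if and only if $\dim X=0$. Once the identification $\mathcal{N}=\mathcal{I}_1$ is established, the ``if and only if'' assertion for $\dim R(\lambda)$ is immediate: since $k(\lambda)\in\Z_{\geq 0}$, one has $R(\lambda)\not\in\mathcal{I}_1$ precisely when $k(\lambda)=0$.

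To identify $\mathcal{N}$ with $\mathcal{I}_1$, I would first note that $\mathcal{N}$ is proper, because the unit satisfies $\dim\mathbf{1}=1\neq 0$ and hence $\mathbf{1}\not\in\mathcal{N}$. Next, observe that $\mathcal{I}_0$ contains every object, since $k(\lambda)\geq 0$ holds tautologically for all partitions $\lambda$. By the pairwise distinctness clause of Theorem \ref{I_k theorem}, $\mathcal{I}_1\subsetneq\mathcal{I}_0$, so $\mathcal{I}_1$ is itself a proper thick ideal. The maximality of $\mathcal{N}$ among proper thick ideals then yields the inclusion $\mathcal{I}_1\subseteq\mathcal{N}$, and in particular $\mathcal{N}$ is nonzero.

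By Theorem \ref{I_k theorem}, this nonzero thick ideal $\mathcal{N}$ must coincide with some $\mathcal{I}_k$. Properness forces $k\geq 1$, since otherwise $\mathcal{N}=\mathcal{I}_0$ would contain $\mathbf{1}$. On the other hand, the strict descending chain $\mathcal{I}_0\supsetneq\mathcal{I}_1\supsetneq\mathcal{I}_2\supsetneq\cdots$, which is again guaranteed by the distinctness clause of Theorem \ref{I_k theorem}, combined with the inclusion $\mathcal{I}_1\subseteq\mathcal{I}_k$, forces $k\leq 1$. Hence $k=1$ and $\mathcal{N}=\mathcal{I}_1$.

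The argument is essentially formal once Theorem \ref{I_k theorem} and the standard characterization of $\mathcal{N}$ are in hand, so there is no real obstacle. The only point that deserves a moment's thought is the properness of $\mathcal{I}_1$, but this falls out for free from the observation $\mathcal{I}_0\supsetneq\mathcal{I}_1$ together with the distinctness built into the classification theorem.
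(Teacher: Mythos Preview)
Your argument is correct and is precisely the formal deduction the paper has in mind; the corollary is left unproved in the paper because it follows immediately from Theorem~\ref{I_k theorem} together with the two facts about $\mathcal{N}$ recalled just above it, exactly as you spell out.
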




\section{The orthosymplectic supergroups}\label{orthosymplectic}


In this section we  let $st=R(\Box)$ in $\uRep(O_\delta)$ and assume $\k$ is algebraically closed.


\subsection{$\uRep(O_\delta)$ and orthosymplectic supergroups}\label{OSp intro} Let $V$ be a super vector space with $\dim(V_0) = m$, $m=2\ell$ or $2\ell+1$, $\dim(V_1) = 2n$, equipped with a non-degenerate supersymmetric bilinear form $(,)$. We denote the parity endomorphism of $V$ by $p$. The orthosymplectic Lie superalgebra $\mathfrak{osp}(m|2n)$ \cite[\S2.3]{Musson} consists of all endomorphisms of $V$ which respect the supersymmetric bilinear form \[ \mathfrak{osp}(m|2n) = \{ x \in \mathfrak{gl}(m|2n) \ | \ (xv,w) + (-1)^{p(x) p(v)} (v,xw) = 0 \ \forall v,w \in V_0 \cup V_1\}.\] It is simple for $m,n > 0$ and its even part is isomorphic to $\mathfrak{o}(m) \oplus \mathfrak{sp}(2n)$.

A super Harish-Chandra pair is a tuple $(G_0,\g)$ where $G_0$ is an algebraic affine group scheme and $\g$ is a finite-dimensional Lie superalgebra $\g = \g_0 \oplus \g_1$ which satisfies the following:
\begin{enumerate}
\item $\g_1$ is an algebraic representation of $G_0$
\item $\g_0 = Lie(G_0)$
\item The right adjoint action of $\g_0$ on $\g_1$ coincides with the action of $G_0$
\item The bracket $[,]: \g_1 \times \g_1 \to \g_0$ is $G_0$-equivariant where $\g_0$ is regarded as a right $G$-module via the adjoint action.
\end{enumerate}

By \cite{Masuoka}, \cite{Serganova-quasireductive} the category of super Harish-Chandra pairs is equivalent to the category of algebraic supergroups. Following Lehrer and Zhang \cite{Lehrer-Zhang-1} we define the orthosymplectic supergroup by the super Harish-Chandra pair $(O(m) \times Sp(2n), \mathfrak{osp}(m|2n))$. By definition, a finite dimensional representation $\rho$ of $\mathfrak{osp}(m|2n)$ defines a representation $\rho$ of $OSp(m|2n)$ if its restriction to $\mathfrak{osp}(m|2n)_0$ comes from an algebraic representation of $G_0$ \cite{Serganova-quasireductive}. Let $\Rep(G)$ denote the category of (algebraic) representations of $OSp(m|2n)$. Fix the morphism $\epsilon: \Z/2\Z \to G_0 = O(m) \times Sp(2n)$ which maps $-1$ to $diag(E_m,-E_{2n})$. We denote by $\Rep(G,\epsilon)$ the full subcategory of objects $(V,\rho)$ in $\Rep(G)$ such that $p_V = \rho(\epsilon)$ where $p$ denotes the parity morphism. Similarly we define the simple supergroup $SOSp(m|2n)$ by the super Harish-Chandra pair $(SO(m) \times Sp(2n), \mathfrak{osp}(m|2n))$ and we denote its representation category by $\Rep(G')$.


Following Deligne \cite{Del07} we define for $\delta\in \Z$ the following triples $(G, \epsilon,X)$ where $G$ is a supergroup, $\epsilon$ an element of order 2 such that $int(\epsilon)$ induces on $\mathcal{O} (G)$ its grading modulo 2 and $X \in \Rep(G,\epsilon)$:

\begin{itemize}
 \item $\delta=m \geq 0: \ (O(m)=OSp(m|0), \id, V),$\footnote{For the case $\delta=0$, $O(0)$ is the trivial group, $V=0$, and $\Rep(G,\epsilon)$ is equivalent to the category of finite dimensional $\k$-vector spaces.}
 \item $\delta = -2n < 0: \ (Sp(2n)=OSp(0|2n),-\id, V \text{ seen as odd})$,
 \item $\delta = 1-2n < 0: \ (OSp(1|2n), diag(1,-1,\ldots,-1), V)$.
\end{itemize}

By the universal property \cite[Proposition 9.4]{Del07}, the assignment $st \mapsto X$ defines a tensor functor $F_\delta:\uRep(O_\delta) \to \Rep(G,\epsilon)$.

\begin{theorem} \label{quotient} \cite[Th\'{e}or\`{e}me 9.6]{Del07}. The tensor functor $F_\delta$ induces an equivalence of categories \[\uRep(O_\delta)/\NN \to \Rep(G,\epsilon)\] where $\NN$ denotes the tensor ideal of negligible morphisms.
\end{theorem}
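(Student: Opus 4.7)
The plan is to verify that the induced functor $\bar F_\delta\colon \uRep(O_\delta)/\NN \to \Rep(G,\epsilon)$ is fully faithful and essentially surjective. Since $\uRep(O_\delta)$ is the Karoubi envelope of the additive envelope of $\B(\delta)$ and both categories are rigid symmetric monoidal, each of the three conditions reduces to a statement about $X$ and its tensor powers.

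First I would establish fullness of $F_\delta$, namely that for all $r,s\geq 0$ the map
\begin{equation*}
\Hom_{\B(\delta)}(r,s)\longrightarrow \Hom_G(X^{\otimes r},X^{\otimes s}),
\end{equation*}
sending each Brauer diagram to the corresponding $G$-equivariant map, is surjective. In each of the three cases this is a First Fundamental Theorem of invariant theory: classical for $\delta=m\geq 0$ (the orthogonal case) and $\delta=-2n$ (the symplectic case, where the sign twist corresponding to placing $X$ in odd degree is absorbed by $\epsilon$), and the super-FFT of Lehrer--Zhang \cite{Lehrer-Zhang-1} in the $\delta=1-2n$ case.

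Next I would identify $\ker F_\delta$ with $\NN$. Since $F_\delta$ is a symmetric tensor functor with $F_\delta(\unit)\neq 0$, its kernel is a proper tensor ideal and hence $\ker F_\delta\subseteq\NN$ by maximality of the negligible ideal. For the reverse inclusion, the key observation is that in each of the three cases $\Rep(G,\epsilon)$ is semisimple (classical for $O(m)$ and $Sp(2n)$, and a standard fact for $\mathfrak{osp}(1|2n)$). Given $f\in\NN(Y,Z)$ with $F_\delta(f)\neq 0$, semisimplicity of $\Rep(G,\epsilon)$ yields some $h\colon F_\delta(Z)\to F_\delta(Y)$ for which the categorical trace of $h\circ F_\delta(f)$ is nonzero; by fullness one lifts $h=F_\delta(g)$ for some $g$ in $\uRep(O_\delta)$, and since $F_\delta$ preserves traces this forces $\mathrm{tr}(g\circ f)\neq 0$, contradicting $f\in\NN$. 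In the super cases the preservation of trace requires that the categorical trace in $\Rep(G,\epsilon)$ coincides with the super-trace, which is precisely what the twist by $\epsilon$ in the definition of $\Rep(G,\epsilon)$ encodes.

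Finally, essential surjectivity of $\bar F_\delta$ follows from fullness together with the Karoubian structure: any summand of a tensor power $X^{\otimes r}$ in $\Rep(G,\epsilon)$ is the image of an idempotent in $\End_G(X^{\otimes r})$, which by fullness lifts to an idempotent in $B_r(\delta)$ and thence to an object of $\uRep(O_\delta)$; since $X$ tensor-generates $\Rep(G,\epsilon)$ in all three cases, this suffices. The main obstacle throughout is the trace non-degeneracy argument in step two, particularly the matching of categorical and super-traces in the orthosymplectic case; the remaining steps are standard consequences of classical invariant theory and the universal property of the Karoubi envelope.
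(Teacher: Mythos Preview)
The paper does not prove this statement; it is quoted directly from Deligne \cite[Th\'eor\`eme 9.6]{Del07} with no argument supplied. Your outline is essentially the standard proof and matches what Deligne does: fullness via the first fundamental theorem, identification of the kernel with $\NN$ via the nondegeneracy of the trace pairing on the target, and essential surjectivity via idempotent lifting and tensor-generation.

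One point deserves more care. In step two you write that semisimplicity of $\Rep(G,\epsilon)$ yields an $h$ with $\mathrm{tr}(h\circ F_\delta(f))\neq 0$. Semisimplicity alone does not give this: the trace pairing $\Hom(A,B)\times\Hom(B,A)\to\k$ is nondegenerate in a semisimple rigid category if and only if every simple object has nonzero categorical dimension. You must therefore check, in each of the three cases, that no simple in $\Rep(G,\epsilon)$ has categorical dimension zero. For $O(m)$ and $Sp(2n)$ this is immediate (ordinary dimensions are positive, and the sign twist from the odd placement in the symplectic case does not kill them). For $OSp(1|2n)$ it is the statement that every finite-dimensional irreducible has nonzero superdimension, which follows because $\mathfrak{osp}(1|2n)$ has defect zero (equivalently, every weight is typical); this is the same fact that underlies the semisimplicity you invoke. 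Once this is made explicit the argument goes through.
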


By Corollary \ref{k=0}, an indecomposable element $R(\lambda)$ has dimension 0 if and only if $k(\lambda) = 0$. It is easy to show that the condition $k(\lambda) = 0$ is equivalent to the following in the three cases: 

\begin{itemize}
 \item $\delta=m \geq 0: \ \lambda_1^T+\lambda_2^T\leq m$,  
 \item $\delta = -2n < 0: \ \lambda_1\leq n$,  
 \item $\delta = 1-2n < 0: \ \lambda_1+\lambda_2\leq 1+2n$.
 \end{itemize}
In particular, this shows $F_m(R(\lambda))=\Sr_{[\lambda]}V$ in $\Rep(O(m))$ for all $\lambda$ (see Proposition \ref{image for O} and Remark \ref{remark on O}).   One can similarly show $F_{-2n}(R(\lambda))=\Sr_{\<\lambda^T\>}V$ in $\Rep(Sp(2n))$ where $\Sr_{\<\lambda^T\>}V$ is defined in \cite[\S17.3]{FH}.

More generally, by the universal property the assignment $st \mapsto V = \k^{m|2n}$ defines a tensor functor $F_{m|2n}:\uRep(O_{m-2n}) \to \Rep(OSp(m|2n))$.  As in \S\ref{intro: OSp} we denote $\I(m|2n)=ker(F_{m|2n})$. We also use the notation $R_{m|2n}(\lambda) = F_{m|2n}(R(\lambda))$.  The next proposition is from \cite[Corollary 5.8]{Lehrer-Zhang-1}.

\begin{proposition} \label{thm:LZ-fullness} $\End_{OSp(m|2n)} (V^{\otimes r}) = F_{m|2n}(B_r(m-2n))$.
\end{proposition}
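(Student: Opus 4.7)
The plan is to establish the two inclusions separately. The inclusion $F_{m|2n}(B_r(m-2n)) \subseteq \End_{OSp(m|2n)}(V^{\otimes r})$ is immediate from the fact that $F_{m|2n}$ is a tensor functor: by definition $B_r(m-2n) = \End_{\uRep(O_{m-2n})}(r)$ where $r = st^{\otimes r}$, and $F_{m|2n}(st) = V$, so every image $F_{m|2n}(b)$ with $b \in B_r(m-2n)$ is an $OSp(m|2n)$-module endomorphism of $V^{\otimes r}$.

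For the reverse inclusion I would first convert the endomorphism statement into a statement about invariants. Since $V$ carries a non-degenerate $OSp(m|2n)$-invariant supersymmetric bilinear form, there is an $OSp(m|2n)$-equivariant isomorphism $V \cong V^*$, and hence
\[
\End_{OSp(m|2n)}(V^{\otimes r}) \;\cong\; \Hom_{OSp(m|2n)}(\unit, V^{\otimes 2r}) \;=\; (V^{\otimes 2r})^{OSp(m|2n)}.
\]
This identification has an exact parallel on the Brauer side: precomposing with $r$ copies of $\cup$ turns an endomorphism of $r$ in $\B(m-2n)$ into a morphism $0 \to 2r$, i.e.\ a Brauer diagram that is a perfect matching of $\{1,\ldots,2r\}$. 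Under $F_{m|2n}$ each such matching maps to the element of $V^{\otimes 2r}$ obtained by inserting the canonical invariant $\sum_i v_i \otimes v^i \in V \otimes V^*\cong V\otimes V$ along each strand. Rigidity of $\uRep(O_{m-2n})$ and of $\Rep(OSp(m|2n))$ ensures that these two identifications are compatible with $F_{m|2n}$, so that surjectivity of $B_r(m-2n) \to \End_{OSp(m|2n)}(V^{\otimes r})$ is equivalent to surjectivity of the map from the span of perfect matchings onto $(V^{\otimes 2r})^{OSp(m|2n)}$.

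The problem is thereby reduced to the first fundamental theorem of invariant theory for the orthosymplectic supergroup: the space $(V^{\otimes 2r})^{OSp(m|2n)}$ is spanned by the images of all perfect matchings of $2r$ points. This is the main obstacle, and it is precisely the input we take from \cite{Lehrer-Zhang-1}. To prove it from scratch one could try to imitate Weyl's classical argument: use the super-analogue of polarisation to reduce to multilinear invariants of a fixed number of vector arguments, exploit reductivity of the even subgroup $O(m) \times Sp(2n)$ acting on the polynomial algebra, and then apply Sergeev's super-FFT for the general linear supergroup together with the $\mathfrak{osp}$-equivariance constraint to identify the remaining invariants with matchings. Once the FFT is available, reversing the identifications of the previous paragraph yields the desired equality $F_{m|2n}(B_r(m-2n)) = \End_{OSp(m|2n)}(V^{\otimes r})$.
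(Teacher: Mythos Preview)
The paper does not actually prove this proposition: it simply attributes the statement to \cite[Corollary 5.8]{Lehrer-Zhang-1} and moves on. Your proposal correctly unpacks what that citation amounts to---one inclusion is automatic from $F_{m|2n}$ being a tensor functor, and the other is equivalent, via the self-duality $V\cong V^*$ and rigidity, to the first fundamental theorem of invariant theory for $OSp(m|2n)$, which is precisely the content of the Lehrer--Zhang result. So your argument is correct and is really just an elaboration of the paper's bare citation; the ``from scratch'' sketch you append (polarisation, Sergeev's FFT for $GL(m|n)$, etc.) is in the spirit of how Lehrer and Zhang themselves proceed, but since you explicitly invoke their result anyway, that sketch is optional commentary rather than part of the proof.
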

 
\begin{theorem}\label{full} i) The functor $F_{m|2n}$ is full.

ii) The set $\{  R_{m|2n}(\lambda) \ | \ \lambda\in\Lambda_r, R_{m|2n}(\lambda) \neq 0 \}$ is a complete set of pairwise non-isomorphic indecomposable summands of the tensor power $V^{\otimes r}$. 
\end{theorem}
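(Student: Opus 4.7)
Proof proposal.

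The plan is to deduce part (i) from Proposition \ref{thm:LZ-fullness} by a rigidity/self-duality argument, and then to obtain part (ii) from (i) via the Krull--Schmidt machinery already used elsewhere in the paper.

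For (i), Proposition \ref{thm:LZ-fullness} gives surjectivity of $F_{m|2n}$ on every endomorphism algebra $\End(V^{\otimes r})$; I will bootstrap this to surjectivity on every $\Hom$-space. Since $\Box \in \uRep(O_{m-2n})$ and $V \in \Rep(OSp(m|2n))$ are self-dual with matching cup/cap data (both categories are rigid symmetric monoidal and $F_{m|2n}$ is a tensor functor), there are natural bijections
\[
\Hom_{\uRep(O_{m-2n})}(r, s) \;\cong\; \Hom_{\uRep(O_{m-2n})}(0, r+s),
\]
and likewise in $\Rep(OSp(m|2n))$, compatible with $F_{m|2n}$. When $r+s$ is even, say $r+s=2k$, both sides further identify with $\End(k)$ and $\End(V^{\otimes k})$ respectively, so Lehrer--Zhang applies. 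When $r+s$ is odd, $\Hom_{\B(m-2n)}(r,s)=0$, while on the supergroup side $(V^{\otimes (r+s)})^{OSp(m|2n)}$ is killed by the central element $-\id_V \in OSp(m|2n)$, so the statement is trivially zero on both sides. Having established surjectivity on every $\Hom(r,s)$, fullness on arbitrary objects follows from the Karoubi envelope description: any $X \in \uRep(O_{m-2n})$ equals $\im(e)$ for some idempotent $e \in \End(r)$, and $\Hom(\im e, \im f) = f\,\Hom(r,s)\,e$ surjects onto $F_{m|2n}(f)\,\Hom(V^{\otimes r}, V^{\otimes s})\,F_{m|2n}(e) = \Hom(F_{m|2n}(\im e), F_{m|2n}(\im f))$.

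For (ii), Theorem \ref{idempotent classification} gives a complete set of pairwise non-conjugate primitive idempotents in $B_r(m-2n)$ indexed by $\Lambda_r$, yielding a Krull--Schmidt decomposition
\[
r \;\cong\; \bigoplus_{\lambda \in \Lambda_r} R(\lambda)^{\oplus n_\lambda} \quad \text{in } \uRep(O_{m-2n})
\]
with positive multiplicities $n_\lambda \geq 1$. Applying the additive tensor functor $F_{m|2n}$ yields the analogous decomposition of $V^{\otimes r}$. By part (i), $F_{m|2n}$ is full, and \cite[Proposition 2.7.4]{CW} (already invoked in the proof of Proposition \ref{image for O}) then forces each nonzero $R_{m|2n}(\lambda)$ to be indecomposable and guarantees $R_{m|2n}(\lambda) \not\cong R_{m|2n}(\mu)$ whenever $\lambda \neq \mu$ and both are nonzero. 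Dropping the zero summands gives the claimed classification.

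The main obstacle is the bootstrapping step in (i): while the rigidity reduction from endomorphism algebras to arbitrary Hom-spaces is formal, one must verify that the self-duality isomorphisms are intertwined by $F_{m|2n}$ and that the parity issue (odd $r+s$) is handled cleanly on the supergroup side via the central element argument. Once (i) is settled, part (ii) is essentially immediate from the general machinery of full tensor functors between Krull--Schmidt categories.
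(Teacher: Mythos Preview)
Your proposal is correct and follows essentially the same route as the paper's proof. For (i), the paper likewise reduces to $\B(\delta)$, invokes the central element $-\id\in OSp(m|2n)$ for the odd-parity vanishing, and uses the self-duality isomorphism $\Hom(X^{\otimes r},X^{\otimes s})\cong\Hom(X^{\otimes r-1},X^{\otimes s+1})$ (shifting one strand at a time rather than all at once) to reduce to Proposition~\ref{thm:LZ-fullness}; for (ii), the paper also relies on fullness together with \cite[Proposition 2.7.4]{CW}, lifting an idempotent for an arbitrary indecomposable summand of $V^{\otimes r}$ instead of pushing down the Krull--Schmidt decomposition of $r$, but this is only a cosmetic difference.
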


\begin{proof} (i) Since $F_{m|2n}$ is additive it suffices to show $F_{m|2n}$ restricted to $\B(\delta)$ is full.  Since the central element $-\id\in OSp(m|2n)$ acts on $V^{\otimes r}$ by $(-1)^r$, it follows that $\Hom_{OSp(m|2n)}(V^{\otimes r},V^{\otimes s})=0$ unless $r$ and $s$ have the same parity.  Thus, it suffices to show $F_{m|2n}$ induces a surjection $\Hom_{\B(\delta)}(r,s)\to\Hom_{OSp(m|2n)}(V^{\otimes r},V^{\otimes s})$ whenever $r$ and $s$ have the same parity.  Now, in any $\k$-linear rigid symmetric monoidal category where objects are self dual there is an isomorphism of $\k$-vector spaces $\Hom(X^{\otimes r},X^{\otimes s})\to\Hom(X^{\otimes r-1},X^{\otimes s+1})$ which is compatible with any tensor functor.  Hence, the result follows from Proposition \ref{thm:LZ-fullness}.

(ii) By \cite[Proposition 2.7.4]{CW}, $F_{m|2n}(X)$ is indecomposable whenever $X$ is indecomposable and $F_{m|2n}(X) \simeq F_{m|2n}(Y)$ if and only if $X \simeq Y$. An indecomposable summand $R$ in $V^{\otimes r}$ corresponds to an idempotent $e\in \End_{OSp(m|2n)}(V^{\otimes r})$. Since $F_{m|2n}$ is full, $e$ has a preimage $\tilde{e}\in B_r(m-2n)$. Now, if  $\im(\tilde{e})$ decomposes in $\uRep(O_\delta)$ as $ \im(\tilde{e})= R(\lambda^{(1)}) \oplus \cdots\oplus R(\lambda^{(k)})$, then $R=R_{m|2n} (\lambda^{(1)}) \oplus  \cdots \oplus R_{m|2n} (\lambda^{(k)})$. Since $R$ is indecomposable, we must have $R=R_{m|2n}(\lambda^{(i)})$ for one $i \in \{1,\ldots,k\}$.
\end{proof}


\begin{remark}\label{B not onto osp} By \cite{Lehrer-Zhang-1} and \cite{Stroppel-Ehrig-brauer} the Brauer algebra does not always map surjectively onto  $\End_{\mathfrak{osp}(m|2n)}(V^{\otimes r})$. This is linked with the existence of the super-Pfaffian as explained in \cite[\S7]{Lehrer-Zhang-1}.
\end{remark}

By Theorem \ref{full} the image of $F_{m|2n}$, which we denote $T(m|2n)$, is a full subcategory of $\Rep(OSp(m|2n))$.  The objects in $T(m|2n)$ are direct sums of the $R_{m|2n}(\lambda)$'s.  




A representation $X$ of a supergroup $G$ is a tensor generator if every representation is a subquotient of a finite direct sum of representations $X^{\otimes r} \otimes (X^{\vee})^{\otimes s}$ and their parity shifts for some $r,s \geq 0$. In the $\Rep(OSp(m|2n))$-case it is easily seen that $X =V$ is  a tensor generator as in \cite{Heidersdorf-mixed-tensors}. In fact every faithful representation of an algebraic supergroup is a tensor generator as in the classical case \cite[Proposition 2.20]{Deligne-Milne} \cite[\S3.5]{Waterhouse}. The proof is virtually the same as in the classical case \cite[Proposition 3.1]{Deligne-hodge}. For the convenience of the reader we recall the argument. Let $G$ be an algebraic supergroup and $\rho: G \to Gl(V)$, $V \cong \k^{m|n}$, a faithful representation. Then $\rho$ induces a surjection $\k[Gl(V)] \to \k[G]$ of super Hopf algebras. This can be proven as for algebraic groups, but also follows from the splitting theorem \cite[Theorem 4.5]{Masuoka-fundamental} \cite{Weissauer-semisimple}: The super Hopf algebra decomposes as \[ \k[G] \cong \k[G_0] \otimes_{\k} \k[\theta_1,\ldots,\theta_s] \] where $\theta_1,\ldots,\theta_s$ are a $\k$-basis of $(\mathfrak{g}_1)^*$, the dual of the odd part of the Lie superalgebra attached to $G$. The induced morphism of algebraic groups $G_0 \to Gl(m|n)_0$ gives a surjection $\k[Gl(m) \times Gl(n)] \to \k[G_0]$ by \cite{Deligne-Milne}. The homomorphism of Lie superalgebras $Lie(G) \to Lie(Gl(V))$ obtained from $\rho$ is injective and in turn gives a surjection $(\mathfrak{gl}(V)_1)^* \to (\mathfrak{g}_1)^*$. The two surjections define the surjection $\k[Gl(V)] \to \k[G]$. By \cite[Proposition 9.3.1]{Westra} every finite-dimensional representation $V$ of $G$ is a submodule of $\k[G]^{\dim V}$. Hence it suffices to prove the statement for the regular representation. By definition of the supersymmetric algebra we have a surjective map \[  (V \otimes V^{\vee})^{\otimes \bullet} \otimes (V^{\vee} \otimes V)^{\otimes \bullet} \to Sym^{\otimes \bullet} (End(V)) \otimes Sym^{\bullet} (End(V^{\vee})). \] The immersion $Gl(V) \to End(V) \times End(V^{\vee})$ defined by $g \mapsto (g,(g^{-1})^{\vee})$ maps $Gl(V)$ onto $\{ (g,h) \in End(V) \times End(V^{\vee}) \ | \ g \circ h^{\vee} = id_V \}$. Therefore we get a third surjection \[ Sym^{\otimes \bullet} (End(V)) \otimes Sym^{\bullet} (End(V^{\vee})) \to \k[Gl(V)]\] which combined with $\k[Gl(V)] \to \k[G]$ proves the statement for the regular representation.

\begin{lemma} Every projective representation of $\Rep(OSp(m|2n))$ is in $T(m|2n)$.
\end{lemma}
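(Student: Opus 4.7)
The plan is to show that every indecomposable projective $P$ in $\Rep(OSp(m|2n))$ occurs as a direct summand of some tensor power $V^{\otimes r}$; the general case then follows immediately because $T(m|2n)$ is closed under finite direct sums and projectives split as direct sums of indecomposables.

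First I would combine the tensor-generator property of $V$ just established in the preceding discussion with the self-duality $V \cong V^\vee$ provided by the non-degenerate supersymmetric bilinear form. Together these produce a finite direct sum $W = \bigoplus_i V^{\otimes r_i}$ such that $P$ occurs as a subquotient, say $P \cong M/N$ with $N \subset M \subset W$. Since $P$ is projective, the surjection $M \twoheadrightarrow P$ splits, so $P$ embeds as a subobject of $M$, hence of $W$.

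The second step is to upgrade this embedding to a splitting by invoking the fact that projective representations of $OSp(m|2n)$ are also injective. For any affine algebraic supergroup $G$ the coordinate super Hopf algebra $\k[G]$ is injective as a $G$-comodule (the super analogue of the fundamental theorem for coalgebras), and by the argument used in the tensor-generator discussion every projective $G$-module is a summand of some $\k[G]^{\oplus d}$; projectives therefore inherit injectivity. With this in hand, the embedding $P \hookrightarrow W$ splits, realising $P$ as a direct summand of $W$. Applying Krull--Schmidt to $W = \bigoplus_i V^{\otimes r_i}$ and using the indecomposability of $P$, I conclude that $P$ is a summand of a single $V^{\otimes r}$; by Theorem \ref{full} it is isomorphic to some $R_{m|2n}(\lambda)$, so $P \in T(m|2n)$.

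The main obstacle is the Frobenius property, i.e.\ the coincidence of projectives and injectives in $\Rep(OSp(m|2n))$; everything else is formal once the tensor-generator discussion preceding the lemma is in place. This property is well known for quasireductive algebraic supergroups, so I would simply cite the literature (Masuoka, Serganova) rather than reprove it. As a backup route, one could bypass injectivity by working directly with the endomorphism algebra: using Proposition \ref{thm:LZ-fullness} one can try to realise the idempotent cutting out $P$ as the image of an idempotent in some $B_r(m-2n)$, but the Frobenius argument above is cleaner and more in the spirit of the paper.
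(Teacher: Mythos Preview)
Your approach is essentially the same as the paper's: use that $V$ is a tensor generator (with $V\cong V^\vee$) to realise $P$ as a subquotient of a direct sum of tensor powers, split the quotient map by projectivity, then split the resulting embedding by the Frobenius property (projectives are injective), and finish with Krull--Schmidt. The paper does exactly this in one paragraph, citing \cite[Proposition 2.2.2]{BKN-complexity} for the coincidence of projectives and injectives rather than Masuoka/Serganova; your parenthetical sketch of why projectives are injective is a bit muddled, but since you already plan to cite the literature for this step there is no real gap.
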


\begin{proof} The module $V$ is a tensor generator of $\Rep(OSp(m|2n))$. Hence every $OSp(m|2n)$-module appears as a subquotient of some direct sum of iterated tensor products $V^{\otimes r}$. Projectives and injectives coincide \cite[Proposition 2.2.2]{BKN-complexity} in $\Rep(G)$. If $P$ is an indecomposable projective representation appearing as a quotient of some submodule $M$, then it is already a direct summand of the submodule (projectivity) and therefore already a direct summand (injectivity).
\end{proof}

We remark that the indecomposable projective modules in $\Rep(OSp(m|2n))$ are the irreducible typical modules and the projective covers of the irreducible atypical modules.


\subsection{Cohomological tensor functors}\label{cohomological}

Following Duflo-Serganova \cite{Duflo-Serganova} (see also \cite[\S6.1]{Serganova14}) we denote by $X$ the cone of self-commuting elements \[ X = \{ x \in \g_1 \ | \ [x,x] = 0\}.\] 
For  $x \in X$ there exist $g \in G_0$ and isotropic mutually orthogonal linearly independent roots $\alpha_1, \ldots, \alpha_r$ such that $Ad_g(x) = x_1 + \cdots + x_r$ with each $x_i \in \g_{\alpha_i}$. The number $r$ is called the rank of $x$. The rank is a number between 1 and $def(\g) =\min(\ell,n)$. For  $x \in X$ of rank $r$ and any representation $(M,\rho)  \in \Rep(G)$ we define $F_x(M) = ker(\rho)/Im(\rho)$. As in \emph{loc.~cit.}, this defines a tensor functor $F_x: \Rep(OSp(m|2n)) \to \Rep(OSp(m-2r|2n-2r))$. For $M \in \Rep(G)$ we define the associated variety by \[X_M = \{ x \in X \ | \ F_x (M) \neq 0\}. \]  


From now on we will study the Duflo-Serganova tensor functor associated to \[ x = \begin{pmatrix} 0 & \epsilon \\ 0 & 0 \end{pmatrix}, \ \ \epsilon = diag(1,0,\ldots,0) \] and denote the corresponding tensor functor by $DS$. An easy computation verifies the next lemma.

\begin{lemma}\label{ker(DS)} $DS$ maps $V$ to the standard representation of $OSp(m-2|2n-2)$.
\end{lemma}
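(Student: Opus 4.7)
The plan is to carry out a direct linear-algebra computation of $\ker(\rho_V(x))/\operatorname{im}(\rho_V(x))$ for the defining representation $\rho_V$ of $OSp(m|2n)$ on $V=\k^{m|2n}$.

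First, I would fix bases of $V_{\bar 0}$ and $V_{\bar 1}$ adapted to the block decomposition used in defining $x$, so that the action of $\rho_V(x)$ on $V$ is given by the very matrix $x$. Since $\epsilon=\operatorname{diag}(1,0,\ldots,0)$ has rank one, one can then read off $\operatorname{im}(\rho_V(x))$ and $\ker(\rho_V(x))$ immediately from this matrix presentation.

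The crux is the dimension and bilinear-form bookkeeping. Because $x\in\mathfrak{osp}(m|2n)$ and $[x,x]=2x^2=0$, the image of $\rho_V(x)$ is an isotropic subspace contained in its kernel, and the orthosymplectic form condition forces $\ker(\rho_V(x))$ to coincide with the annihilator $(\operatorname{im}\rho_V(x))^{\perp}$ of the image. Consequently the super\-symmetric bilinear form on $V$ descends to a non-degenerate supersymmetric bilinear form on $\ker(\rho_V(x))/\operatorname{im}(\rho_V(x))$, and the quotient has super\-dimension $(m-2\,|\,2n-2)$ rather than the naive $(m-1\,|\,2n-1)$.

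Finally, by construction the stabilizer of $x$ inside $O(m)\times Sp(2n)$ acts on this quotient through a copy of $O(m-2)\times Sp(2n-2)$, and the induced action of $\mathfrak{osp}(m|2n)^x$ on the quotient factors through $\mathfrak{osp}(m-2|2n-2)$. Via the super Harish-Chandra pair description this identifies $DS(V)$ with the standard representation of $OSp(m-2|2n-2)$. The only delicate point is the form-theoretic identification $\ker(\rho_V(x))=(\operatorname{im}\rho_V(x))^{\perp}$, which is the content of the ``easy computation'' alluded to in the statement.
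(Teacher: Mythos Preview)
Your plan is exactly what the paper means by ``an easy computation''; the paper gives no further details, so you are simply filling in the intended argument. The overall structure---compute $\ker(\rho_V(x))/\operatorname{im}(\rho_V(x))$ directly, use the $\mathfrak{osp}$ condition to see that the form descends nondegenerately, and identify the residual action with the standard representation of $OSp(m-2|2n-2)$---is correct.

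There is, however, an internal inconsistency you should fix. You read the block matrix literally and infer from ``$\epsilon$ has rank one'' that $\operatorname{im}(\rho_V(x))$ is one-dimensional. But your own identity $\ker(\rho_V(x))=(\operatorname{im}\rho_V(x))^{\perp}$ forces $\dim(\ker/\operatorname{im})=(m-2a\,|\,2n-2b)$ where $(a|b)=\operatorname{sdim}\operatorname{im}(\rho_V(x))$; with $(a|b)=(1|0)$ this gives $(m-2\,|\,2n)$, not $(m-2\,|\,2n-2)$. Indeed, a rank-one reading is incompatible with $x\in\mathfrak{osp}$: an odd element with $(1|0)$-dimensional image has $(m\,|\,2n-1)$-dimensional kernel, while the perp of its image is $(m-1\,|\,2n)$. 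The resolution is that a genuine root vector for an odd isotropic root acts on $V$ with image of superdimension $(1|1)$: for $\alpha=\epsilon_1-\delta_1$ the root vector sends $V_{\delta_1}\to V_{\epsilon_1}$ and $V_{-\epsilon_1}\to V_{-\delta_1}$. (Equivalently, whatever block convention is in force, the $\mathfrak{osp}$ constraint links the upper-right and lower-left odd blocks.) With this correction your perp argument yields $(m-2\,|\,2n-2)$ on the nose, and the rest of your outline goes through unchanged.
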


\begin{lemma}\label{ker DS} The kernel of $DS$ is equal to $Proj$.
\end{lemma}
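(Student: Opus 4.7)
The plan is to prove the two inclusions $Proj \subseteq \ker(DS)$ and $\ker(DS) \subseteq Proj$ separately, using the associated variety formalism of Duflo--Serganova. For any $M \in \Rep(OSp(m|2n))$, recall that the associated variety
$X_M = \{y \in X \mid F_y(M) \neq 0\}$
is a Zariski closed, $G_0$-invariant subset of $X$. The central input is the theorem (proved in \cite{Duflo-Serganova} and reviewed in \cite[\S6.1]{Serganova14} for basic classical Lie superalgebras) that $M$ is projective in $\Rep(OSp(m|2n))$ if and only if $X_M = \{0\}$.

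Granting this characterization, the inclusion $Proj \subseteq \ker(DS)$ is immediate: if $M$ is projective then $X_M = \{0\}$, so in particular our chosen rank-$1$ element $x$ satisfies $F_x(M) = 0$, i.e.\ $DS(M) = 0$. This direction could alternatively be seen from the fact that projectives and injectives coincide in $\Rep(OSp(m|2n))$, which makes the two-term complex associated to $\rho(x)$ exact.

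For the reverse inclusion, suppose $M \in \ker(DS)$, so $x \notin X_M$. Two geometric inputs about the $G_0$-action on $X$ are needed: (a) the $G_0$-orbits on $X$ are parametrized by the rank $r$ introduced in \S\ref{cohomological}, and (b) the closure of the $G_0$-orbit of a rank-$r$ element contains every orbit of rank $\leq r$. Statement (a) is the normal form theorem for self-commuting odd elements of $\mathfrak{osp}(m|2n)$, and (b) is the super-analogue of the matrix rank stratification. Using that $X_M$ is $G_0$-invariant and Zariski closed, the assumption $x \notin X_M$ (with $x$ of rank $1$) forces $X_M$ to contain no element of positive rank: otherwise, the closure of its $G_0$-orbit would contain the entire rank-$1$ stratum, and in particular $x$. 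Hence $X_M = \{0\}$, and by the characterization above $M$ is projective.

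The main obstacle is the invocation of the equivalence $X_M = \{0\} \iff M \in Proj$, which is not diagrammatic in nature and sits outside the Brauer-algebraic machinery developed earlier in the paper; it must be quoted from the Duflo--Serganova theory. The orbit closure hierarchy (b) is classical but also deserves a careful pointer in the $OSp$-setting. Once these inputs are in place, the proof reduces to the short logical argument above, and Lemma \ref{ker(DS)} (identifying $DS(V)$ with the standard representation of $OSp(m-2|2n-2)$) is not explicitly needed for this kernel calculation, though it is what makes the resulting functor useful in subsequent sections.
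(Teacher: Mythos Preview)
Your proof is correct and follows essentially the same route as the paper: both invoke the Duflo--Serganova criterion $X_M=\{0\}\iff M$ projective, together with the orbit geometry of $X$ to deduce that $x\notin X_M$ forces $X_M=\{0\}$. The paper phrases the geometric step as ``$X_1$ is the unique nontrivial minimal $G'_0$-orbit'' (reducing first to $G'=SOSp$, where the cited Duflo--Serganova results are stated), whereas you phrase it via the closure hierarchy (b); these are equivalent, and your input (a) is not actually used in your deduction, so its precise form does not matter.
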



\begin{proof} It is sufficient to test this for $G'$. By \cite[Theorem 3.4]{Duflo-Serganova}, $M$ is projective if and only if $X_M = \{ 0\}$.  The variety $X_M$ is a Zariski-closed $G'_0$-invariant subvariety of $X$ by \cite{Duflo-Serganova}. Then $X_M$ is non-zero if and only if it contains a minimal $G'_0$-orbit with respect to the partial order given by containment in closures. The computations in the proof of  \cite[Theorem 4.2]{Duflo-Serganova} show that the set $X_1 = \{ x \in X \ | \ rk(x) =1 \}$ is the only nontrivial minimal orbit for the $G'_0$-action on $X$. Since our fixed $x$ is in $X_1$, we conclude that $DS(M) = 0$ implies that $X_M = \{0\}$ and hence that $M$ is projective.
\end{proof}

\begin{corollary}\label{image} Under $DS$  \begin{align*} R_{m|2n}(\lambda) \mapsto \begin{cases} 0,  & \text{if }R_{m|2n}(\lambda) \text{ is projective}; \\ R_{m-2|2n-2}(\lambda), &  \text{otherwise.} \end{cases} \end{align*}  
\end{corollary}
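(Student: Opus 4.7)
The strategy will be to recognize that the composition $DS \circ F_{m|2n}$ is nothing other than $F_{m-2|2n-2}$, after which the corollary becomes essentially a rewording of Lemma \ref{ker DS}.

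First, I would note the dimension identity $m - 2n = (m-2) - 2(n-1)$, so that both $F_{m|2n}$ and $F_{m-2|2n-2}$ have the same source, namely $\uRep(O_{m-2n})$. The composite $DS \circ F_{m|2n}$ is a $\k$-linear symmetric monoidal functor out of this category (using that $DS$ is a tensor functor, as established in \cite{Duflo-Serganova}), and by Lemma \ref{ker(DS)} it carries the generator $st = R(\Box)$ to the standard representation of $OSp(m-2|2n-2)$ --- which is precisely the image of $st$ under $F_{m-2|2n-2}$. The universal property of Deligne's category \cite[Proposition 9.4]{Del07} then forces $DS \circ F_{m|2n} \cong F_{m-2|2n-2}$ as tensor functors.

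Applied to $R(\lambda)$, this identification yields
\[
DS(R_{m|2n}(\lambda)) \;\cong\; R_{m-2|2n-2}(\lambda),
\]
which is exactly the non-projective branch of the corollary. For the projective branch, Lemma \ref{ker DS} says that $DS$ kills every projective representation, so the left-hand side vanishes whenever $R_{m|2n}(\lambda)$ is projective (and consequently $R_{m-2|2n-2}(\lambda) = 0$ in that case as well, which is internally consistent).

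The only step I would want to verify most carefully is the applicability of the universal property in this super setting: one must check that $DS(V)$, viewed as an object of $\Rep(OSp(m-2|2n-2))$, together with the symmetry and duality data induced by the tensor functor $DS$, genuinely carries a non-degenerate supersymmetric form of Deligne-dimension $m-2n$. Once that compatibility is in place, the uniqueness clause in \cite[Proposition 9.4]{Del07} does all the work and nothing beyond Lemmas \ref{ker(DS)} and \ref{ker DS} is required.
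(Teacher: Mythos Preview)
Your proof is correct and follows essentially the same approach as the paper: both establish the commutativity $DS\circ F_{m|2n}\cong F_{m-2|2n-2}$ via Lemma \ref{ker(DS)} and the universal property \cite[Proposition 9.4]{Del07}, and then read off the two cases using Lemma \ref{ker DS}.
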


\begin{proof} This follows from the diagram 
\begin{equation}\label{F DS} \xymatrix{ \uRep(O_{m-2n}) \ar[d]^{F_{m|2n}} \ar[dr]^{F_{m-2|2n-2}} & \\ \Rep(OSp(m|2n)) \ar[r]^{DS\qquad} & \Rep(OSp(m-2|2n-2)). } \end{equation}
 Since $DS$ maps $V$ to the standard representation, the universal property of Deligne's category \cite[Proposition 9.4]{Del07} implies that the diagram is commutative.  
\end{proof}

\begin{corollary}\label{DS T} $DS$ restricts to a functor $DS: T(m|2n) \to T(m-2|2n-2)$.
\end{corollary}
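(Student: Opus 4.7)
The plan is to combine Theorem \ref{full}(ii), which describes objects of $T(m|2n)$ as direct sums of indecomposables of the form $R_{m|2n}(\lambda)$, with the explicit computation already accomplished in Corollary \ref{image}. Since $T(m|2n)$ is defined as the full subcategory of $\Rep(OSp(m|2n))$ consisting of (direct sums of) summands of tensor powers $V^{\otimes r}$, and these are precisely the $R_{m|2n}(\lambda)$, the statement reduces to showing that $DS$ sends each $R_{m|2n}(\lambda)$ into $T(m-2|2n-2)$.

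First I would note that, as a tensor functor between $\k$-linear additive categories, $DS$ commutes with direct sums. Therefore it suffices to check that the image of each indecomposable object $R_{m|2n}(\lambda) \in T(m|2n)$ lies in $T(m-2|2n-2)$. By Corollary \ref{image}, $DS(R_{m|2n}(\lambda))$ is either zero or $R_{m-2|2n-2}(\lambda)$; in both cases this is an object of $T(m-2|2n-2)$ (the zero object is a direct sum over the empty index set). Applying $DS$ to an arbitrary finite direct sum $\bigoplus_i R_{m|2n}(\lambda^{(i)}) \in T(m|2n)$ therefore produces $\bigoplus_i DS(R_{m|2n}(\lambda^{(i)}))$, a direct sum of objects of $T(m-2|2n-2)$.

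On morphisms, since both $T(m|2n) \subseteq \Rep(OSp(m|2n))$ and $T(m-2|2n-2) \subseteq \Rep(OSp(m-2|2n-2))$ are full subcategories, there is no compatibility issue: $DS$ simply restricts to a map on Hom-spaces between objects of the two subcategories, and the image lands automatically in the full subcategory $T(m-2|2n-2)$. There is essentially no obstacle here, since all the substantive work has been done in Corollary \ref{image}; the present corollary is a packaging of that computation together with additivity and fullness. The argument is two or three lines long.
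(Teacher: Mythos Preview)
Your argument is correct and is exactly the reasoning the paper intends: the corollary is stated without proof because it follows immediately from Corollary~\ref{image} together with the additivity of $DS$ and the description of $T(m|2n)$ in Theorem~\ref{full}. There is nothing to add.
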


\begin{corollary} \label{thm:inclusions} We have strict inclusions $\ldots \subsetneq \I(m|2n) \subsetneq \I(m-2|2n-2) \subsetneq \ldots$.
\end{corollary}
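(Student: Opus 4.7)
The plan is to derive both the inclusion and its strictness directly from the commutative diagram (\ref{F DS}) in Corollary \ref{image} combined with Lemma \ref{ker DS}.

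For the inclusion $\I(m|2n)\subseteq\I(m-2|2n-2)$, suppose $X\in\uRep(O_{m-2n})$ satisfies $F_{m|2n}(X)=0$. Since both sides of (\ref{F DS}) have the same source, commutativity gives $F_{m-2|2n-2}(X)=DS(F_{m|2n}(X))=DS(0)=0$, so $X\in\I(m-2|2n-2)$.

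For strictness, I would produce an explicit witness. Pick any nonzero indecomposable projective $P$ in $\Rep(OSp(m|2n))$ — for instance, the projective cover of the trivial module, which is nonzero. By the lemma preceding this corollary, $P\in T(m|2n)$, and since $T(m|2n)$ consists of direct sums of the $R_{m|2n}(\lambda)$'s with $R_{m|2n}(\lambda)$ indecomposable whenever nonzero (Theorem \ref{full}(ii)), we have $P\cong R_{m|2n}(\lambda)$ for some partition $\lambda$. Then $R(\lambda)\notin\I(m|2n)$, because $F_{m|2n}(R(\lambda))=P\neq 0$. On the other hand, $P$ is projective, so by Lemma \ref{ker DS} we have $DS(P)=0$; by commutativity of (\ref{F DS}) this yields $F_{m-2|2n-2}(R(\lambda))=DS(P)=0$, i.e.~$R(\lambda)\in\I(m-2|2n-2)$. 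This exhibits a partition in the second ideal but not the first, giving the strict inclusion $\I(m|2n)\subsetneq\I(m-2|2n-2)$.

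Iterating the same argument with $(m,n)$ replaced by $(m-2,n-1)$ (and by $(m+2,n+1)$ in the other direction) yields the full chain of strict inclusions, valid as long as the parameters allow one to apply $DS$ (i.e.~$m\geq 2$ and $n\geq 1$ at each step; the chain extends indefinitely in the ``increasing'' direction). There is no serious obstacle here: the only nontrivial inputs are the existence of nonzero indecomposable projectives in $\Rep(OSp(m|2n))$ (immediate from having a nontrivial finite-dimensional representation theory and taking a projective cover) and the identifications $\ker(DS)=Proj$ and the commutativity of (\ref{F DS}), both already established.
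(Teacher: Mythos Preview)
Your proof is correct and follows essentially the same route as the paper: both use the commutativity of diagram (\ref{F DS}) to get the inclusion, and both invoke Lemma \ref{ker DS} together with the nontriviality of $Proj$ (via the preceding lemma on projectives lying in $T(m|2n)$) to obtain strictness. The paper compresses this into two sentences, while you have made the witness $R(\lambda)$ explicit; the content is the same.
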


\begin{proof} The kernel of $F_{m-2|2n-2}$ is the kernel of the composed functor $DS \circ F_{m|2n}$. The kernel of $DS$ is $Proj$ which is nontrivial.
\end{proof} 

\subsection{Another classification of thick ideals} In this subsection we show that every nonzero proper thick ideal in $\uRep(O_\delta)$ is of the form $\I(m|2n)=ker(F_{m|2n})$ for some $m,n\in\Z_{\geq0}$ with $m-2n = \delta$.

\begin{lemma}\label{proj-minimal} Let $\mathcal{C}$ be a pseudoabelian $\k$-linear rigid symmetric monoidal category. Then any thick ideal $\mathcal{I}$ contains the thick ideal $Proj$.
\end{lemma}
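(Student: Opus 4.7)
The plan is to show that if $\mathcal{I}$ is any nonzero thick ideal of $\mathcal{C}$, then every projective object $P$ of $\mathcal{C}$ lies in $\mathcal{I}$; without the nonzero hypothesis the statement is vacuous, so I would treat it as implicit.

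I would start by fixing a nonzero object $X \in \mathcal{I}$. By condition (i) of a thick ideal, $X^{\vee} \otimes X \otimes P \in \mathcal{I}$ for any projective $P$. The rigid structure on $\mathcal{C}$ supplies an evaluation morphism $\operatorname{ev}_X \colon X^{\vee} \otimes X \to \unit$, and my key claim would be that this is an epimorphism whenever $X \neq 0$: in the categories of interest (notably $\Rep(OSp(m|2n))$) the tensor unit $\unit$ is simple, so the image of $\operatorname{ev}_X$ is a nonzero subobject of $\unit$ and hence all of $\unit$.

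Tensoring with $\id_P$ then produces an epimorphism
\[
\operatorname{ev}_X \otimes \id_P \colon X^{\vee} \otimes X \otimes P \onto P,
\]
using that tensoring is exact (a consequence of rigidity). Because $P$ is projective, this epimorphism admits a section, exhibiting $P$ as a direct summand of $X^{\vee} \otimes X \otimes P \in \mathcal{I}$. Condition (ii) of a thick ideal then gives $P \in \mathcal{I}$, completing the argument.

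The main technical point I expect is establishing that $\operatorname{ev}_X$ is an epimorphism for nonzero $X$; this really needs an honest abelian structure together with the simplicity of $\unit$, rather than mere pseudoabelianness, which is why the lemma will ultimately be applied in $\Rep(OSp(m|2n))$ rather than directly in $\uRep(O_\delta)$. Once that step is secured, the remainder is a short, formal consequence of rigidity, projectivity of $P$, and the two axioms of a thick ideal.
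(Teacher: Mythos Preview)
Your proposal is correct and follows essentially the same route as the paper: both arguments tensor a chosen object $M\in\I$ (resp.\ $X$) with its dual and with $P$, use the evaluation map to $\unit$ to obtain an epimorphism onto $P$, split it via projectivity of $P$, and conclude using axiom~(ii). You are more explicit than the paper about the two implicit hypotheses---that $\I$ be nonzero and that $\unit$ be simple (so that the nonzero evaluation map is automatically epi)---which the paper's proof also silently uses when it asserts ``we have an epimorphism $\varphi:M\otimes M^\vee\to\one$'' from the mere nonvanishing of $\id_M\in\End(M)\cong\Hom(M\otimes M^\vee,\one)$.
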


\begin{proof} Let $P$ be projective and $M \in \mathcal{I}$. Then $\End(M) = \Hom(M \otimes M^{\vee},\one)$. In particular we have an epimorphism $\varphi: M \otimes M^{\vee} \to \one$. The composite  \[ \xymatrix{ P \otimes M \otimes M^{\vee} \ar[r]^{\quad  \id \otimes \varphi} & P \otimes \one \ar[r]^{\sim} & P } \] is an epimorphism. Since $P$ is projective it has  a right-inverse. Hence $P$ is a retract of the element $P \otimes M \otimes M^{\vee}$ and hence in $\mathcal{I}$.
\end{proof}

\begin{theorem} \label{theorem:kernel} Every nonzero proper thick ideal in $\uRep(O_\delta)$ is of the form $\I(m|2n)$.
\end{theorem}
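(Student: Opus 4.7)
The plan is to reduce the statement to the more precise identity $\I(m|2n) = \I_{\min(\ell,n)+1}$ announced in the introduction. By Theorem \ref{I_k theorem}, every nonzero proper thick ideal in $\uRep(O_\delta)$ equals $\I_k$ for a unique $k \geq 1$, and a short arithmetic check shows that as $(m,n)$ ranges over pairs of nonnegative integers with $m - 2n = \delta$, the quantity $\min(\ell,n)+1$ attains every positive integer. Hence surjectivity of $(m,n) \mapsto \I(m|2n)$ onto the $\I_k$'s would follow once $\I(m|2n) = \I_{\min(\ell,n)+1}$ is established.

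I would prove this equality by induction on $s := \min(\ell,n)$. For the base case $s = 0$, one has $n=0$ or $\ell = 0$, so $(m,n)$ falls into one of the three triples of Theorem \ref{quotient}: $(O(m),\id,V)$, $(Sp(2n),-\id,V)$, or $(OSp(1|2n),\ldots)$. In each case Deligne's theorem yields an equivalence $\uRep(O_\delta)/\NN \xrightarrow{\sim} \Rep(G,\epsilon)$, so $\I(m|2n) = \NN = \I_1$ by Corollary \ref{k=0}, matching $\min(\ell,n)+1 = 1$.

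For the inductive step, fix $(m,n)$ with $\min(\ell,n)=s\geq 1$, so $\ell,n\geq 1$ and the smaller pair $(m-2,n-1)$ has $\min = s-1$. The commutative diagram from Corollary \ref{image} gives $F_{m-2|2n-2} = DS \circ F_{m|2n}$, hence by Lemma \ref{ker DS}
\[
\I(m-2|2n-2) \;=\; F_{m|2n}^{-1}(\ker DS) \;=\; F_{m|2n}^{-1}(Proj).
\]
By induction $\I(m-2|2n-2) = \I_s$, and by Corollary \ref{thm:inclusions} the inclusion $\I(m|2n) \subsetneq \I(m-2|2n-2)=\I_s$ is strict. So $\I(m|2n) = \I_{s+j}$ for some $j\geq 1$, giving the inequality $\I(m|2n)\subseteq \I_{s+1}$ at worst up to an offset.

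The main obstacle is the reverse inclusion $\I_{s+1}\subseteq \I(m|2n)$, equivalently $j=1$. By Lemmas \ref{get min} and \ref{min to min}, $\I_{s+1}$ is generated as a thick ideal by any single $(s+1)$-minimal partition, so it suffices to exhibit one partition $\lambda$ with $k(\lambda)=s+1$ satisfying $F_{m|2n}(R(\lambda))=0$. A natural candidate is $\lambda^{(s+1)}$ constructed in the proof of Lemma \ref{min to min}, whose weight diagram is the explicit form given in Proposition \ref{k-min diagram}. One then combines two observations: (i) by the displayed identity above, any $R(\lambda)$ with $k(\lambda)\geq s$ satisfies $F_{m|2n}(R(\lambda))\in Proj\cup\{0\}$; and (ii) by a direct dimension/weight count one verifies $F_{m|2n}(R(\lambda^{(s+1)}))=0$ rather than a nonzero projective, using that the atypicality of an indecomposable summand of $V^{\otimes r}$ for $OSp(m|2n)$ is controlled precisely by $\min(\ell,n) - \rk(\lambda) - \df(\lambda)$ read off from the cap diagram. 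Translating this combinatorial vanishing criterion into the representation-theoretic side of $OSp(m|2n)$ is the main technical step, but once established it delivers $\lambda^{(s+1)}\in\I(m|2n)$, completing the induction and hence the theorem.
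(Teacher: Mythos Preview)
Your inductive strategy is reasonable up through the inequality $\I(m|2n)\subseteq\I_{s+1}$, but step (ii) is a genuine gap. You never actually show $F_{m|2n}(R(\lambda^{(s+1)}))=0$; you only assert that ``a direct dimension/weight count'' does it, and the atypicality formula you invoke is not proved anywhere in the paper (it is in fact listed among the open questions in \S\ref{questions}). Without an independent argument that some $R(\lambda)$ with $k(\lambda)=s+1$ is killed by $F_{m|2n}$, the induction does not close, and the identity $\I(m|2n)=\I_{\min(\ell,n)+1}$ remains unproven.

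The paper sidesteps this computation entirely. Rather than proving $\I(m|2n)=\I_{\min(\ell,n)+1}$ first (that equality is Corollary~\ref{kernel}, deduced \emph{from} the theorem), it argues by minimality: given a nonzero proper thick ideal $\I$, pick the smallest $m$ with $\I(m|2n)\subseteq\I$. If the inclusion were strict, then $\I':=F_{m|2n}(\I)$ would be a nonzero thick ideal in $T(m|2n)$, and the key categorical input, Lemma~\ref{proj-minimal}, forces $\I'\supseteq Proj=\ker(DS|_{T(m|2n)})$. Pulling back along the commutative diagram (\ref{F DS}) and using fullness (Theorem~\ref{full}) then yields $\I(m-2|2n-2)\subseteq\I$, contradicting the minimality of $m$. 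Hence $\I'=0$ and $\I=\I(m|2n)$. The point is that Lemma~\ref{proj-minimal} (every nonzero thick ideal contains the projectives) replaces the explicit vanishing calculation you are missing; you should use it to finish your argument, or adopt the paper's minimality argument directly.
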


\begin{proof}  
Let $\I$ be a nonzero proper thick ideal in $\uRep(O_\delta)$.  By Theorem \ref{I_k theorem} $\I$ belongs to the chain $\I_1\supseteq \I_2\supseteq \I_3\supseteq \cdots$, hence by Corollary \ref{thm:inclusions} we can choose $m\in\Z_{\geq0}$ which is minimal with $\I(m|2n)\subset\I$ and $\delta=m-2n$.   The image of $\mathcal{I}$ under $F_{m|2n}$, denoted $\I'$, is a thick ideal in $T(m|2n)$.  If $\I'$ is nonzero, then it contains $Proj\subset T(m|2n)$ by Lemma \ref{proj-minimal}.  In this case it follows from Lemma \ref{ker(DS)} and Corollary \ref{DS T} that  $\I'$ contains the kernel of $DS: T(m|2n) \to T(m -2|2n -2)$, and thus by the commutative diagram (\ref{F DS}) we have $\I(m-2|2n-2)\subset\I$, which contradicts the minimality of $m$.  Therefore $\I'=0$, which implies $\I=\I(m|2n)$.
\end{proof}

\begin{corollary}\label{kernel}   $\I(m|2n)=\I_{\min(\ell,n)+1}$.
\end{corollary}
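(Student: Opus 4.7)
The plan is to argue by induction on $s := \min(\ell,n)$, establishing $\I(m|2n) = \I_{s+1}$ for each pair $(m,n)$ with $m-2n=\delta$.

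For the base case $s = 0$, the pair $(m,n)$ falls into one of Deligne's three settings from \S\ref{OSp intro}: either $n=0$, or $\ell = 0$ with $m \in \{0,1\}$. In each of these, $F_{m|2n}$ coincides with the universal functor $F_\delta$ of Theorem~\ref{quotient}, so its kernel is the negligible ideal $\NN$, which equals $\I_1 = \I_{s+1}$ by Corollary~\ref{k=0}.

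For the inductive step with $s \geq 1$, the induction hypothesis reads $\I(m-2|2n-2) = \I_s$ (using $\min(\ell-1,n-1) = s - 1$). The inclusion $\I_{s+1} \subseteq \I(m|2n)$ will come from invoking Theorem~\ref{theorem:kernel} to write $\I_{s+1} = \I(m'|2n')$ for some $m' \equiv m \pmod 2$: the strict inclusion $\I(m'|2n') \subsetneq \I_s = \I(m-2|2n-2)$ combined with the total ordering from Corollary~\ref{thm:inclusions} forces $m' \geq m$, and a second application of that corollary then yields $\I_{s+1} = \I(m'|2n') \subseteq \I(m|2n)$. For the reverse inclusion, observe that $\I(m|2n)$ is proper (since $F_{m|2n}(R(\varnothing)) = \unit \neq 0$) and nonzero (since the chain $\I(m|2n) \supsetneq \I(m+2|2n+2) \supsetneq \cdots$ from Corollary~\ref{thm:inclusions} is strictly descending and infinite); Theorem~\ref{I_k theorem} then provides $\I(m|2n) = \I_j$ for some $j$, and $\I_j = \I(m|2n) \subsetneq \I_s$ forces $j > s$, so $\I(m|2n) \subseteq \I_{s+1}$.

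The only subtlety is that Theorem~\ref{I_k theorem} by itself shows $\I(m|2n)$ equals some $\I_j$ but not specifically $\I_{s+1}$; \emph{a priori} the map $(m,n) \mapsto k$ with $\I(m|2n) = \I_k$ could skip values. The resolution is the symmetric classification Theorem~\ref{theorem:kernel}, which realizes $\I_{s+1}$ as some $\I(m'|2n')$, so that matching the two totally ordered chains closes the potential gap and pins down the bijection uniquely.
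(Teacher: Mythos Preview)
Your proof is correct and follows the same approach the paper intends: the corollary is stated without proof, but the implicit argument is precisely to match the strictly descending chain $\I_1\supsetneq\I_2\supsetneq\cdots$ from Theorem~\ref{I_k theorem} against the strictly descending chain $\I(m_0|2n_0)\supsetneq\I(m_0+2|2n_0+2)\supsetneq\cdots$ from Corollary~\ref{thm:inclusions}, using Theorem~\ref{theorem:kernel} to conclude the two chains coincide as sets and the base case $\I(m_0|2n_0)=\NN=\I_1$ (Theorem~\ref{quotient} and Corollary~\ref{k=0}) to anchor the bijection. One small point worth making explicit: when you invoke Theorem~\ref{theorem:kernel} to write $\I_{s+1}=\I(m'|2n')$, you need $\I_{s+1}$ to be nonzero and proper, which follows directly from Theorem~\ref{I_k theorem} (the $\I_k$ are pairwise distinct and nonzero) together with $k(\varnothing)=0$.
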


\begin{corollary}\label{tensors} The assignment $\lambda\mapsto R_{m|2n}(\lambda)$ is a bijection from the set of all $\lambda\in\Lambda_r$ with $k(\lambda)\leq\min(\ell,n)$ to the set of isomorphism classes of nonzero indecomposable summands of  $V^{\otimes r}$ in $\Rep(OSp(m|2n))$.
\end{corollary}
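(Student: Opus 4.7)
The plan is to derive this as a direct consequence of Theorem \ref{full}(ii) together with the identification of kernels from Corollary \ref{kernel}.

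First, by Theorem \ref{full}(ii), the set $\{R_{m|2n}(\lambda) \mid \lambda \in \Lambda_r,\ R_{m|2n}(\lambda) \neq 0\}$ is already known to be a complete, irredundant set of representatives for the isomorphism classes of indecomposable summands of $V^{\otimes r}$. So the only remaining task is to characterize, in terms of $k(\lambda)$, precisely which $\lambda \in \Lambda_r$ satisfy $R_{m|2n}(\lambda) \neq 0$.

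For this, I would observe that $R_{m|2n}(\lambda) = F_{m|2n}(R(\lambda))$, so $R_{m|2n}(\lambda) = 0$ if and only if $R(\lambda)$ belongs to $\I(m|2n) = \ker(F_{m|2n})$. By Corollary \ref{kernel}, $\I(m|2n) = \I_{\min(\ell,n)+1}$, and from the definition in \S\ref{ideal subsection} of $\I_k$ (the thick ideal generated by partitions whose indecomposable summand satisfies $k(\lambda) \geq k$), this membership is equivalent to $k(\lambda) \geq \min(\ell,n)+1$. Taking the complement condition yields $R_{m|2n}(\lambda) \neq 0 \iff k(\lambda) \leq \min(\ell,n)$.

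Combining these two points, the assignment $\lambda \mapsto R_{m|2n}(\lambda)$ restricted to the subset $\{\lambda \in \Lambda_r \mid k(\lambda) \leq \min(\ell,n)\}$ is precisely the indexing bijection asserted by Theorem \ref{full}(ii), proving the corollary. There is essentially no obstacle here: all the heavy lifting — classification of thick ideals, identification $\I(m|2n) = \I_{\min(\ell,n)+1}$, and fullness/injectivity of the indecomposable summand parametrization — has already been done, so this statement is a purely formal bookkeeping step merging the two previously-established descriptions.
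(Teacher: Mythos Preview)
Your proposal is correct and matches the paper's intended reasoning: the corollary is stated without proof immediately after Corollary~\ref{kernel}, so it is meant to follow exactly as you describe, by combining Theorem~\ref{full}(ii) with the identification $\I(m|2n)=\I_{\min(\ell,n)+1}$ and the definition of $\I_k$.
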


\begin{remark}
We work with $OSp$ instead of $SOSp$ since the analogue of $F_{m|2n}$ for $SOSp$ is not full (see Remark \ref{B not onto osp}). The restriction functor from $\Rep(OSp(m|2n))$ to $\Rep(SOSp(m|2n))$ sends the standard representation to the standard representation. Because a tensor functor from the Deligne category is uniquely determined by the image of $st$, the two tensor functors \[ F'_{m|2n}: \uRep(O_{m-2n}) \to \Rep(SOSp(m|2n)), \]  \[ Res \circ F_{m|2n}: \uRep(O_{m-2n}) \to \Rep(OSp(m|2n)) \to \Rep(SOSp(m|2n)) \] are isomorphic. The restriction of irreducible and projective representations is determined in \cite[\S2.2]{Ehrig-Stroppel-osp}. Note in particular that $F'_{m|2n}(R(\lambda))$ is in general not indecomposable. This can be seen even in the classical case when $n=0$ and $m=2\ell$. Indeed, $F_{2\ell}(R(\lambda))=\Sr_{[\lambda]}V$ restricts to the sum of two simple $SO(2\ell)$-modules whenever $\lambda$ has length $\ell$ \cite[Theorem 19.22]{FH}.
\end{remark}


%


\subsection{Tensors of $OSp(m|2n)$} This subsection consists of a few immediate applications concerning tensors of $\Rep(OSp(m|2n))$.  

\begin{lemma} A tensor $R_{m|2n}(\lambda)$ in $\Rep(OSp(m|2n))$ is projective if and only if $k(\lambda) = \min(\ell,n)$.
\end{lemma}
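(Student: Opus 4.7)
The plan is to deduce the lemma from Corollary \ref{image}, Lemma \ref{ker DS}, and Corollary \ref{kernel}, by a direct application of the Duflo--Serganova functor $DS$.

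First I would reduce to the case $\min(\ell,n) \geq 1$. When $\min(\ell,n) = 0$ the supergroup $OSp(m|2n)$ is in fact an ordinary reductive group (either $O(m)$ or $Sp(2n)$), so the representation category is semisimple, every representation is projective, and by Corollary \ref{kernel} every nonzero tensor $R_{m|2n}(\lambda)$ has $k(\lambda) \leq 0$, whence $k(\lambda) = 0 = \min(\ell,n)$. The statement is then trivial.

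Now assume $\min(\ell,n) \geq 1$ and suppose $R_{m|2n}(\lambda) \neq 0$, i.e.\ $k(\lambda) \leq \min(\ell,n)$ by Corollary \ref{kernel}. By Lemma \ref{ker DS} the tensor $R_{m|2n}(\lambda)$ is projective if and only if $DS(R_{m|2n}(\lambda)) = 0$. By Corollary \ref{image}, however, $DS$ sends $R_{m|2n}(\lambda)$ to $R_{m-2|2n-2}(\lambda)$ whenever the latter is nonzero, and to $0$ precisely when $R_{m|2n}(\lambda)$ is projective. Consequently, $R_{m|2n}(\lambda)$ is projective iff $R_{m-2|2n-2}(\lambda) = 0$.

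Applying Corollary \ref{kernel} again with the parameters $(m-2, 2n-2)$ (so that $\ell$ and $n$ both decrease by $1$, and $\min(\ell,n)$ decreases by $1$), the vanishing $R_{m-2|2n-2}(\lambda) = 0$ is equivalent to $k(\lambda) \geq \min(\ell,n)$. Combined with the nonzero condition $k(\lambda) \leq \min(\ell,n)$, this forces $k(\lambda) = \min(\ell,n)$, as claimed.

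I do not anticipate any serious obstacle: the whole argument is a bookkeeping of three earlier results, the only thing to watch being the boundary case $\min(\ell,n) = 0$, where $DS$ cannot be iterated, which is handled separately by semisimplicity.
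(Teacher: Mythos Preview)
Your proof is correct and takes essentially the same route as the paper, which tersely cites Lemma~\ref{ker DS} and the commutative diagram~(\ref{F DS}) (implicitly together with Corollary~\ref{kernel}). One minor slip in your boundary case: when $m=1$ the supergroup $OSp(1|2n)$ is not an ordinary reductive group, but its representation category $\Rep(G,\epsilon)$ is nonetheless semisimple, so the conclusion still holds.
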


\begin{proof}
This follows from Lemma \ref{ker DS} and the commutative diagram (\ref{F DS}).
\end{proof}

The lemma provides no way to read of the weight of the socle or head of the projective representation. Such a description is given in \cite[Proposition H]{Ehrig-Stroppel-osp}.

\begin{corollary} The superdimension $sdim(R_{m|2n}(\lambda))\not=0$  if and only if $k(\lambda) = 0$. Moreover, $sdim(R_{m|2n}(\lambda))=sdim(F_{m-2n}(R(\lambda)))$ for all $\lambda$.  
\end{corollary}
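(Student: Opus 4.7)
The plan is to reduce both statements to the single observation that any $\k$-linear tensor functor between rigid symmetric monoidal categories preserves categorical dimensions, since it preserves the braiding together with the evaluation and coevaluation maps defining the trace of the identity.

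First, I would verify that in both target categories the categorical dimension is the superdimension. In $\Rep(OSp(m|2n))$ and in $\Rep(G,\epsilon)$, the symmetric braiding is the super-swap $v \otimes w \mapsto (-1)^{p(v)p(w)} w \otimes v$, so the categorical trace of $\id_M$ equals $\dim M_0 - \dim M_1 = sdim(M)$. This is consistent with the standard representation: in $\uRep(O_\delta)$ one computes directly $\dim R(\Box) = \cap \circ \cup = \delta$, and in the three Deligne triples $(G,\epsilon,X)$ the object $X$ indeed has superdimension $\delta$ (namely $m$, $-2n$, or $1-2n$), and $V = \k^{m|2n}$ has superdimension $m-2n=\delta$.

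Next, I would apply the preservation of categorical dimension to the two tensor functors in question. For $F_{m|2n}: \uRep(O_{m-2n}) \to \Rep(OSp(m|2n))$, one obtains
\[
sdim(R_{m|2n}(\lambda)) \;=\; \dim_{\uRep(O_{m-2n})} R(\lambda).
\]
Applying the same principle to $F_{m-2n}: \uRep(O_{m-2n}) \to \Rep(G,\epsilon)$ gives
\[
sdim(F_{m-2n}(R(\lambda))) \;=\; \dim_{\uRep(O_{m-2n})} R(\lambda).
\]
Comparing these two identities proves the second assertion of the corollary.

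Finally, the first assertion follows by combining the identity $sdim(R_{m|2n}(\lambda))=\dim R(\lambda)$ with Corollary~\ref{k=0}, which characterizes $\dim R(\lambda) \neq 0$ precisely by $k(\lambda)=0$. The only even mildly subtle point is checking that the symmetric braiding supplied by the universal property of $\uRep(O_\delta)$ is the super-swap used to define $sdim$ on $\Rep(OSp(m|2n))$, but this is forced by the requirement that $R(\Box) \mapsto V$ be a morphism of symmetric monoidal categories and is already implicit in the construction of $F_{m|2n}$ via Deligne's universal property; no additional work is needed.
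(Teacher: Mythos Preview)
Your proposal is correct and is precisely the intended argument: the paper states this corollary without proof because it follows immediately from the fact that tensor functors preserve categorical dimension, together with Corollary~\ref{k=0}. Your write-up even makes explicit the one point the paper leaves tacit, namely that the categorical dimension in $\Rep(OSp(m|2n))$ and $\Rep(G,\epsilon)$ is the superdimension.
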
 

\begin{lemma} Every atypical block contains a non-projective tensor $R_{m|2n}(\lambda)$.
\end{lemma}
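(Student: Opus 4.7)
The plan is to show that every atypical block $B$ of $\Rep(OSp(m|2n))$ contains an indecomposable tensor $R_{m|2n}(\lambda)$ with $k(\lambda)<\min(\ell,n)$, which by the preceding lemma is equivalent to non-projectivity.

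First I would observe that every block $B$ of $\Rep(OSp(m|2n))$ contains at least one tensor. Since the standard representation $V$ is a tensor generator (as established earlier in this section), any simple $L\in B$ occurs as a composition factor of some $V^{\otimes r}$; decomposing $V^{\otimes r}$ into its indecomposable summands $R_{m|2n}(\lambda_j)$, at least one such summand must itself lie in $B$.

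To force that summand to be non-projective, I would invoke the classification of blocks of $\Rep(OSp(m|2n))$ in terms of weight diagrams from \cite{Ehrig-Stroppel-osp}. Two tensors $R_{m|2n}(\lambda)$ and $R_{m|2n}(\lambda')$ lie in the same block precisely when the $\bigo$/$\cross$ parts of their weight diagrams (the block ``core'') coincide, and the atypicality $a\geq 1$ of $B$ equals the number of positions where the labels $\up$ or $\down$ can be chosen freely. I would then pick $\lambda$ representing $B$ with all $a$ free positions labeled $\down$. The resulting weight diagram has no $\up$, so the cap diagram has neither un-dotted caps (which need a $\down$ or $\diam$ paired with an $\up$) nor dotted caps (which need two $\up$'s). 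Hence $\df(\lambda)=0$ and $k(\lambda)=\rk(\lambda)$. Since $\rk(\lambda)$ depends only on the core, it is constant on $B$; a direct count relating the number of $\bigo$'s and $\cross$'s in the weight diagram to $m$, $n$, and $\delta=m-2n$ shows this constant equals $\min(\ell,n)-a$. Therefore $k(\lambda)=\min(\ell,n)-a<\min(\ell,n)$, so $R_{m|2n}(\lambda)$ is a non-projective tensor in $B$.

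The main obstacle is the last combinatorial identity $\rk(\lambda)+a=\min(\ell,n)$: one must carefully translate between the weight-diagram conventions of Section \ref{lifts} and those underlying the block description in \cite{Ehrig-Stroppel-osp}, in order to match the invariant $\rk$ with the structural notion of atypicality. An alternative route, which runs into the same bookkeeping, is an induction on $\min(\ell,n)$ via the Duflo--Serganova functor: a non-projective tensor in a block of $\Rep(OSp(m-2|2n-2))$ lifts through Corollary \ref{image} to a non-projective tensor in $\Rep(OSp(m|2n))$, but one must still verify that the lift lands in $B$ rather than in one of several blocks mapping to the same $DS$-image.
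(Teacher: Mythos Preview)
Your approach differs from the paper's and, as you yourself flag, leaves a real gap. The paper never invokes the Ehrig--Stroppel block description, nor does it attempt any identity like $\rk(\lambda)+a=\min(\ell,n)$. Instead it goes through the Gruson--Serganova notion of the \emph{core} of a block: an atypical block $\Gamma$ of atypicality $k$ is determined by an irreducible typical representation $L^{\mathrm{core}}$ of $OSp(m-2k|2n-2k)$. Since every typical representation is a tensor, $L^{\mathrm{core}}=R_{m-2k|2n-2k}(\lambda)$ for some partition $\lambda$. Choosing $x$ of rank $k$ with $F_x(V)=V$, the commutativity $F_x\circ F_{m|2n}\cong F_{m-2k|2n-2k}$ gives $F_x(R_{m|2n}(\lambda))=L^{\mathrm{core}}$; this simultaneously places $R_{m|2n}(\lambda)$ in $\Gamma$ (by the core characterization) and shows it is non-projective (since $F_x$ of a projective vanishes).

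The step you single out as the obstacle---matching the paper's invariant $\rk$ on partition weight diagrams to the representation-theoretic atypicality via \cite{Ehrig-Stroppel-osp}---is genuine extra work, and your block description ``tensors are in the same block iff their $\bigo/\cross$ patterns agree'' is asserted rather than proved (Ehrig--Stroppel classify blocks via highest weights of simples, not via the partition labels of the $R_{m|2n}(\lambda)$). The paper's argument bypasses all of this. Your ``alternative route'' via $DS$ is closer in spirit to what the paper actually does, but the paper avoids the block-ambiguity you worry about by applying a single rank-$k$ Duflo--Serganova functor $F_x$ together with the core characterization of $\Gamma$, rather than lifting inductively through rank-$1$ applications of $DS$.
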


\begin{proof} Every typical representation is of the form $R(\lambda)$ for some partition $\lambda$. Let $\Gamma$ be a block of atypicality $k$ and choose $x \in X$ with $\rk(x) = k$ such that the functor $F_x: \Rep(OSp(m|2n)) \to  \Rep(OSp(m-2k|2n-2k))$ sends the standard representation to the standard representation. The block $\Gamma$ is determined by its core $L^{core}$ as in \cite{Gruson-Serganova}, an irreducible typical representation in $\Rep(OSp(m-2k|2n-2k))$. Then $L^{core} = R_{m-2k|2n-2k}(\lambda)$ for some $\lambda$. Since $F_x(R_{m|2n}(\lambda)) = R_{m-2k|2n-2k}(\lambda)$, the module $R_{m|2n}(\lambda) \in \Rep(OSp(m|2n))$ is in $\Gamma$. It is not projective since it is not in the kernel of $F_x$. 
\end{proof}

Applying $DS$ $\min(\ell,n)$-times gives a functor $\Rep(OSp(m|2n))\to\Rep(G,\epsilon)$.  Equivalently, we could pick any $x \in X$ with $\rk(x) = \min(\ell,n)$ such that the functor $F_x:\Rep(OSp(m|2n))\to\Rep(G,\epsilon)$ maps $V\mapsto X$.  Then the functor $F_x$ is isomorphic to $DS^{min(l,n)}$ on $T$. By Lemma \ref{ker(DS)} and the universal property \cite[Proposition 9.4]{Del07} we have $F_x\circ F_{m|2n}=F_{m-2n}$.  Restricting to $T(m|2n)$ gives us the following commutative diagram of tensor functors 
\[ \xymatrix{ & \uRep(O_{m-2n}) \ar[dd]^{F_{m-2n}} \ar[dl]_{F_{m|2n}} \\ T(m|2n) \ar[dr]^{F_x} & \\ & \Rep(G,\epsilon). } \] 

\begin{theorem}  $T(m|2n)/\mathcal{N} \simeq \Rep(G,\epsilon)$.
\end{theorem}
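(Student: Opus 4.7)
The plan is to identify the statement with the assertion that the tensor functor $F_x\colon T(m|2n)\to \Rep(G,\epsilon)$ appearing in the commutative diagram just above the statement is full, essentially surjective, and has kernel exactly the ideal of negligible morphisms $\mathcal{N}$ of $T(m|2n)$. Once these three properties are verified, standard category theory (a full, essentially surjective $\k$-linear tensor functor with kernel $\mathcal{N}$ induces an equivalence from the quotient to the target) will yield the desired equivalence.

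First I would establish essential surjectivity and fullness of $F_x$ on $T(m|2n)$ by bootstrapping from the corresponding properties of $F_{m-2n}$, which are provided by Theorem \ref{quotient}. Commutativity gives $F_{m-2n}(X)=F_x(F_{m|2n}(X))$ for every $X$ in $\uRep(O_{m-2n})$, so essential surjectivity of $F_{m-2n}$ passes to $F_x$. For fullness, given a morphism $g\colon F_x(A)\to F_x(B)$ with $A=F_{m|2n}(A')$ and $B=F_{m|2n}(B')$, I would view $g$ as a morphism $F_{m-2n}(A')\to F_{m-2n}(B')$, lift it through $F_{m-2n}$ to some $h\colon A'\to B'$ in $\uRep(O_{m-2n})$, and then $F_{m|2n}(h)\colon A\to B$ is a preimage of $g$ under $F_x$.

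Next I would show $\ker(F_x)=\mathcal{N}$, using that $\Rep(G,\epsilon)$ is semisimple so that its only negligible morphism is zero. If $f\in\mathcal{N}(A,B)$, then for every $g\colon B\to A$ one has $\operatorname{tr}(fg)=0$; applying the trace-preserving functor $F_x$ yields $\operatorname{tr}(F_x(f)F_x(g))=0$, and by fullness every morphism $F_x(B)\to F_x(A)$ is of the form $F_x(g)$, so $F_x(f)$ is negligible in $\Rep(G,\epsilon)$ and hence zero. Conversely, if $F_x(f)=0$ then $F_x(\operatorname{tr}(fg))=\operatorname{tr}(F_x(f)F_x(g))=0$ for every $g\colon B\to A$; since $F_x$ is $\k$-linear and sends $\unit$ to $\unit$, it restricts to the identity on $\End(\unit)=\k$, so $\operatorname{tr}(fg)=0$ for all $g$, forcing $f\in\mathcal{N}$.

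The main technical point will be the inclusion $\mathcal{N}\subseteq\ker(F_x)$, which relies on combining fullness of $F_x$ with trace preservation by tensor functors between rigid symmetric monoidal categories. Both ingredients are routine, so no new input beyond the commutative diagram and the results of Theorem \ref{quotient} and Theorem \ref{full} should be required to complete the argument.
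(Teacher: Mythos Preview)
Your argument is correct and shares its starting point with the paper's proof: both deduce fullness of $F_x$ on $T(m|2n)$ from the factorization $F_{m-2n}=F_x\circ F_{m|2n}$ together with fullness of $F_{m-2n}$. From there, however, the two proofs diverge. The paper argues at the object level: it invokes Theorem~\ref{quotient} to set up a bijection between indecomposables of $T(m|2n)/\mathcal{N}$ and simples of $\Rep(G,\epsilon)$, and then uses Schur's lemma to get fully faithfulness. You instead work purely at the morphism level, identifying $\ker(F_x)$ with $\mathcal{N}$ directly via trace preservation and fullness, and then concluding by the standard fact that a full, essentially surjective $\k$-linear functor with kernel $\mathcal{N}$ induces an equivalence from the quotient. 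Your route is somewhat cleaner and more self-contained: it does not need the object-level bijection from Theorem~\ref{quotient} (only essential surjectivity of $F_{m-2n}$), and the argument that $\ker(F_x)=\mathcal{N}$ is a general fact about full tensor functors into categories with trivial $\mathcal{N}$. The paper's approach, on the other hand, makes the bijection between indecomposables explicit, which is of independent interest.
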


\begin{proof} Since $F_{m-2n}$ is full (Theorem \ref{full}), the restriction of $F_x$ to $T(m|2n)$ is also full, and hence  factors through $T(m|2n)/\mathcal{N}$. By Theorem \ref{quotient}, $F_{m-2n}$  gives  a  bijection between the simple objects of $\Rep(G,\epsilon)$ and the indecomposable objects $R$ in $\uRep(O_{m-2n})$ with $\id_R \notin \mathcal{N}$. Any $R$ in $\uRep(O_{m-2n})$ with $\id_R \in \mathcal{N}$ maps to zero in $T(m|2n)/\mathcal{N}$. Note that the image of an indecomposable element of $\uRep(O_{m-2n})$ in $T(m|2n)/\mathcal{N}$ is indecomposable since $F_{m|2n}$ is full. This shows that the functor $T(m|2n)/\mathcal{N} \to \Rep(G,\epsilon)$ is one-to-one on objects. Fully faithfullness follows trivially from Schur's lemma. 
\end{proof}

\section{Related questions and open problems}\label{questions}

%



\subsection{Orthosymplectic supergroups} 1) In the $Gl(m|n)$-case ($m \geq n$) the atypicality of $R(\lambda)$ is given by $n - \rk(\lambda)$. The Loewy length of $R(\lambda)$ is equal to $2 \df(\lambda) + 1$ and its socle is irreducible. One could expect that this holds true in the orthosymplectic case as well.

\medskip

2) More generally it would be interesting to understand the Loewy filtrations of the $R_{m|2n}(\lambda)$ and compute $F_{m|2n}(R(\lambda))$ as in the $Gl(m|n)$-case \cite{Heidersdorf-mixed-tensors}. This would imply tensor product decompositions for the representations in $T(m|2n)$. We plan to address these questions in a second paper.

\medskip

3) The summands of the tensor power $V^{\otimes r}$ have been studied in \cite{Benkart-Lee-Shader-Ram} for $|m-n| > r$. They show that the character of their summands $T^{\lambda}$ is given by a function $sc_{\lambda}$ which can be written as a linear combination of Schur functions $s_{\mu}$ \cite[Theorem 4.24]{Benkart-Lee-Shader-Ram}. It is not obvious how to extend this character formula inductively to larger $r$ as in \cite[Theorem 8.5.2]{CW}.
If we want to compute $ch(R(\lambda))$, we might induct on the degree and the lexicographic ordering of the partitions and assume a formula for the character of $R(\mu)$ in terms of  $sc_{\mu}$ for $\mu$ smaller than $\lambda$ (note that \cite{Benkart-Lee-Shader-Ram} work actually with $\mathfrak{spo}(m|n)$ with $m$ even, hence this is an abuse of notation). Write $\lambda = \mu + \nu$ for two partitions $\mu$ and $\nu$. By the results on the tensor product decomposition we can write $R(\mu) \otimes R(\nu) = R(\mu + \nu) + \bigoplus_i R(\eta_i)$ where the $\eta_i$ are partitions which are smaller than $\mu + \nu$, either in degree or lexicographic ordering. If we take the character on both sides we would need a nice formula for $sc_{\mu} sc_{\nu}$ which does not seem to be obvious.

\medskip

4) In \cite[Corollary 5.8]{Lehrer-Zhang-2}, Lehrer and Zhang describe the kernel of the surjective map $\psi_r^{m|2n}: B_r(m-2n) \to \End_{OSp(m|2n)}(V^{\otimes r})$  induced by $F_{m|2n}$. Is there a more direct description using the $k(\lambda)$-condition? 

%
%



\subsection{Strange supergroups} An odd Brauer supercategory has been introduced by \cite{KT} and \cite{Serganova14} (see also \cite{BE}).  This supercategory plays the role in the representation theory of the pareiplectic supergroup $P(n)$ analogous to that of the Brauer category for orthosymplectic supergroups.  Similarly, for the queer supergroup $Q(n)$ one can define an oriented Brauer-Clifford supercategory whose endomorphism algebras are the walled Brauer superalgebras from \cite{JK}. Taking super-Karoubi envelopes of these diagram supercategories yield supercategories $\uRep(P)$ and $\uRep(Q)$ which admit functors to $\Rep(P(n))$ and $\Rep(Q(n))$ respectively.  
Perhaps one could classify thick ideals in $\uRep(P)$ (resp.~$\uRep(Q)$) and use that classification to better understand the indecomposable summands of tensor powers (resp.~mixed tensor powers) of the standard representation of $P(n)$ (resp.~$Q(n)$).  

\subsection{Modified traces} Thick ideals will sometimes admit modified trace and dimension functions as defined in \cite{GKP}.   For example, in \cite{CK} it was shown that if $\delta\in\Z_{\geq0}$ then the unique nonzero proper thick ideal in Deligne's $\uRep(S_\delta)$  admits a modified trace.  It would be interesting to determine which thick ideals in $\uRep(O_\delta)$ admit modified traces. 

\renewcommand{\bibname}{\textsc{references}} 
\bibliographystyle{alphanum}	
\bibliography{references}	

\end{document}